\documentclass[11pt]{article}
\usepackage{authblk}
\usepackage[T1]{fontenc}
\usepackage{lmodern}
\usepackage[margin=1.05in]{geometry}
\usepackage{amsmath,amssymb,amsthm,mathtools}
\usepackage{microtype,booktabs,enumitem}
\usepackage{xcolor,fancyhdr}
\usepackage[colorlinks=true,linkcolor=blue!45!black,citecolor=blue!45!black,urlcolor=blue!45!black]{hyperref}

\numberwithin{equation}{section}

\newtheorem{theorem}{Theorem}[section]
\newtheorem{proposition}[theorem]{Proposition}
\newtheorem{lemma}[theorem]{Lemma}
\newtheorem{corollary}[theorem]{Corollary}

\theoremstyle{definition}
\newtheorem{definition}[theorem]{Definition}
\theoremstyle{remark}

\newcommand{\N}{\mathbb N}
\newcommand{\Z}{\mathbb Z}
\newcommand{\Q}{\mathbb Q}
\newcommand{\R}{\mathbb R}
\newcommand{\C}{\mathbb C}
\newcommand{\T}{\mathbb T}
\newcommand{\dd}{\,\mathrm d}

\newcommand{\rank}{\operatorname{rank}}
\newcommand{\Crit}{\operatorname{Crit}}
\newcommand{\B}{\mathcal B}
\newcommand{\M}{\mathfrak M}
\newcommand{\m}{\mathfrak m}
\newcommand{\SingSeries}{\mathfrak S}
\newcommand{\JJ}{\mathfrak J}
\newcommand{\av}[1]{\int_X #1\dd\nu}
\newcommand{\norm}[1]{\lVert #1\rVert}

\title{\bfseries Dynamical Chowla conjecture for polynomials of several variables}

\author[1]{Biao Wang}
\affil[1]{School of Mathematics and Statistics, Yunnan University, Kunming, Yunnan 650500, China\\ \texttt{bwang@ynu.edu.cn}}

\author[2]{Daodao Yang\footnote{Corresponding author: Daodao Yang.}}
\affil[2]{School of Mathematical Science, Eastern Institute of Technology, Ningbo, Zhejiang 315200, China\\ \texttt{yangdao2@126.com}}

\date{\today}

\begin{document}
\maketitle
\thispagestyle{plain}
\vspace{-1.5em}
\begin{abstract}
In 1965, Chowla conjectured that the mean value of the Liouville function at non-square one-variable polynomials is zero. This conjecture is a generalization of the prime number theorem (PNT). In 2022, Bergelson and Richter established a dynamical generalization of this form of the PNT. For non-square multivariate polynomials, one  also expects that Chowla's conjecture is true. In this article, we extend Bergelson and Richter's result to multivariate polynomials whose Birch rank is sufficiently large in terms of their degree. We also establish analogous results for averages over integer points on algebraic varieties.
\end{abstract}

\noindent\textit{2020 MSC.}
37A44, 11N37, 14N07.

\smallskip
\noindent\textit{Keywords.} Unique ergodicity; Birch rank; Weyl sums; circle method.

\section{Introduction}
Let $\Omega(n)$ denote the number of prime factors of a positive integer
$n$, counted with multiplicity, with $\Omega(1)= \Omega(0)= 0$ and $\Omega(t)=\Omega(|t|)$ for any integer $t<0$. Put $\lambda(t)=(-1)^{\Omega(t)}$ for $t\in\Z$, including the convention $\lambda(0)=1$. A nonconstant polynomial $P(x)\in \mathbb{Z}[x]$ is \emph{non-square} if $P(x)$ has positive leading coefficient and $P(x)$ is not of the form $cQ(x)^2$ for any integer $c\ge1$ and polynomial $Q(x)\in \mathbb{Z}[x]$. In 1965, Chowla~\cite{Chowla1965} conjectured that 
\begin{align}\label{chowla_conj}
\lim_{N\to\infty} \frac1N\sum_{n\leq N}\lambda(P(n))=0
\end{align}
for any such polynomial. For $P(n)=n$, it is an equivalent form of the prime number theorem (PNT). For $P(n)=qn+a$ for any integers $q\ge1$ and $0\leq a\leq q-1$, it is an equivalent form of the PNT in arithmetic progressions.  Let $k\ge2$. Equation \eqref{chowla_conj} also contains Chowla's well-known conjecture on $k$-point correlations. See \cite{MRT2015, Tao2016, TT2019, HelfgottRadziwill2021, Pilatte2026, OAI} for the developments in this direction.  A nonconstant multivariate polynomial $P\in \Z[X_1,\dots,X_k]$ is \emph{non-square} if $P$ is not of the form $cQ^2$ for any $c\in\Z\setminus\{0\}$ and $Q\in \mathbb{Z}[X_1,\dots,X_k]$. By analogy with Chowla's conjecture for univariate polynomials, one may conjecture that
\begin{align}\label{chowla_conj_multivariate}
\lim_{N\to\infty} \frac1{N^k}\sum_{n_1,\dots, n_k\leq N}\lambda(|P(n_1,\dots, n_k)|)=0
\end{align}
for any non-square polynomial. For example, Helfgott \cite{Helfgott_1, Helfgott_2} proved that \eqref{chowla_conj_multivariate} holds for cubic homoegeneous polynomial. In \cite{Teravainen}, Ter\"av\"ainen proved that there are infinitely many coprime pairs $(m,n)$ such that $\lambda(am^3+bn^4)=\sigma$ for $\sigma\in \{-1,1\}, 1\leq a,b\leq 100, k\ge1$ and the same for $\lambda(am^2+bn^k)=\sigma$.

In 2022, Bergelson and Richter
\cite{BR} established a dynamical generalization of \eqref{chowla_conj} for  $P(n)=n$.  Let  $(X,\nu,T)$ be a uniquely ergodic topological dynamical system, where $X$ is a compact metric space, $T:X\to X$ is
continuous, and $\nu$ is the unique
$T$-invariant Borel probability measure. 
Bergelson and Richter
\cite{BR} proved that
\begin{equation}\label{eq:BRintro}
 \lim_{N\to\infty}\frac1N\sum_{n=1}^N f(T^{\Omega(n)}x)=\av f
\end{equation}
for every $x\in X$, and every $f\in C(X)$. Their arithmetic progression
version, \cite[Corollary 1.16]{BR}, permits $n$ to be replaced by
$qn+a$ for any fixed modulus $q\ge1$ and $a\in\{0,1,\dots,q-1\}$. Their work has stimulated a number of generalizations and
variants of \eqref{eq:BRintro};  see \cite{Loyd2023, Xiao2025, DLMS, CharamarasRichter2025, Wang2025procA, WangYi2026jnt, CespedesDonoso2026, Araujo2026,BergelsonReillyRichter2026,QZ}. In \cite{WangYi2026}, Wang and Yi conjectured that the analogous dynamical generalization of the multivariate Chowla conejcture \eqref{chowla_conj_multivariate} holds:  let $P\in \Z[X_1,\dots,X_k]$ be a polynomial with coprime coefficients such that there is no integer $d\ge2$ such that the values of $|P(X_1,\dots,X_k)|$ at almost all positive integers are $d$th powers, then
	\begin{equation}\label{conj_multi_poly_eqn}
		\lim_{N\to\infty} \frac1{N^k}\sum_{1\leq n_1,\dots, n_k\leq N} f(T^{\Omega(|P(n_1,\dots,n_k)|)}x) = \int_X f\,d\nu
	\end{equation}
holds for any $x\in X$ and $f\in C(X)$. In \cite{QZ}, Qi and Zheng gave a criterion that implies \eqref{conj_multi_poly_eqn}. In this article, we will show that if the Birch rank of the highest-degree homogeneous part is sufficiently large in terms of the degree, then \eqref{conj_multi_poly_eqn} holds. Moreover, we will generalize it to averages over integer points in algebraic varieties. 

\subsection{Main results for polynomials of several variables}

Let $k\ge1$ be an integer.
If $H\in\Z[X_1,\ldots,X_k]$ is a nonzero homogeneous polynomial of
degree ${d}\ge2$, define its \emph{Birch rank} by
\begin{equation}\label{eq:Birchdef}
 \B(H)=k-\dim_{\C}\Crit(H),\qquad
 \Crit(H)=\{\mathbf n\in\C^k:\nabla H(\mathbf n)=0\}.
\end{equation}
For example, a smooth
projective hypersurface has $\B(H)=k$. For a polynomial $P$ of degree
${d}$, write $P_{d}$ for its highest-degree homogeneous part and set
$\B(P)=\B(P_{d})$. Put $[N]=\{1,\ldots,N\}$. 

\begin{theorem}
\label{thm:Birch}
Let  $(X,\nu,T)$ be a uniquely ergodic system. Let $P\in\Z[X_1,\ldots,X_k]$ be a fixed polynomial of degree
${d}\ge2$.
If the Birch rank of $P$ satisfies
\begin{equation}\label{eq:rankcondition}
 \B(P)>({d}-1)2^{{d}-1},
\end{equation}
then we have
\begin{equation}\label{eq:generalmain}
 \lim_{N\to\infty}\frac1{N^k}
 \sum_{\mathbf n\in[N]^k}f\bigl(T^{\Omega(P(\mathbf n))}x\bigr)=\av f
\end{equation}
 for every $x\in X$ and
every $f\in C(X)$.
\end{theorem}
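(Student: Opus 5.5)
The plan is to reduce the dynamical statement to equidistribution properties of $\Omega$ along the values $P(\mathbf n)$, following the Bergelson--Richter strategy. Their approach, as axiomatized in later work (e.g.\ the criterion of Qi and Zheng~\cite{QZ}), requires two inputs for the weights $a_N(m)=\#\{\mathbf n\in[N]^k: P(\mathbf n)=m\}/N^k$.

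\textbf{Input (a): local uniformity in arithmetic progressions.} For every fixed modulus $q$ and every fixed squarefree $d$, the proportion of $\mathbf n\in[N]^k$ with $d\mid P(\mathbf n)$ converges to the local density
\[
\rho(d)=d^{-k}\#\{\mathbf n\bmod d: P(\mathbf n)\equiv0\},
\]
and this holds uniformly enough in $d\le N^{\eta}$. The density $\rho$ is multiplicative.

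\textbf{Input (b): a ``one extra prime factor'' comparison.} For primes $p,p'$ in a short multiplicative window, we need
\[
\frac1{N^k}\sum_{\mathbf n}\mathbf 1_{p\mid P(\mathbf n)}\,g\bigl(P(\mathbf n)/p\bigr)\approx \frac{\rho(p)}{\rho(p')}\cdot\frac1{N^k}\sum_{\mathbf n}\mathbf 1_{p'\mid P(\mathbf n)}\,g\bigl(P(\mathbf n)/p'\bigr).
\]
This is where Bergelson--Richter's identity $\Omega(pn)=\Omega(n)+1$ enters. Combined with a Tur\'an--Kubilius-type estimate (which needs only (a) with $d=pp'$), it yields
\[
\frac1{N^k}\sum_{\mathbf n} f\bigl(T^{\Omega(P(\mathbf n))}x\bigr)\approx\frac1{N^k}\sum_{\mathbf n} f\bigl(T^{\Omega(P(\mathbf n))+1}x\bigr).
\]
Unique ergodicity then forces the limit to be $\int_X f\dd\nu$. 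One also needs $\sum_p \rho(p)=\infty$ along the relevant prime sets. This follows from the Lang--Weil bound: $\rho(p)=c_p/p+O(p^{-3/2})$ with $c_p$ equal on average to the number of absolutely irreducible components of $P=0$ defined over $\mathbb F_p$. The Birch rank condition makes $P_d$, and hence $P$, absolutely irreducible modulo almost all $p$, so $c_p=1$ for all but finitely many $p$.

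To verify (a) and (b) I would use Birch's circle method. For $\B(P)>(d-1)2^{d-1}$, Birch's theorem (with the standard uniform version in congruence classes) gives an asymptotic for $\#\{\mathbf n\in[N]^k:\mathbf n\equiv\mathbf a\bmod q,\ P(\mathbf n)=m\}$. More usefully, via Weyl differencing one gets the minor-arc bound
\[
\sum_{\mathbf n\in[N]^k,\,\mathbf n\equiv\mathbf a\,(q)}e(\alpha P(\mathbf n))\ll N^{k-\delta}
\]
for $\alpha$ on minor arcs, uniformly in $q\le N^{\eta}$. Both the Weyl bound and the singular-series convergence come from the rank hypothesis. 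The major-arc contribution produces the main term $\rho(d)$ times the singular integral. This gives (a) with power savings: write $\mathbf 1_{d\mid P(\mathbf n)}$ by splitting into residues of $\mathbf n$ mod $d$, which involves only polynomial counting in boxes. Equidistribution mod $d$ of $\mathbf n$ in a box already gives (a) trivially for fixed $d$; the circle method is needed for uniformity and for (b).

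For (b) I would pass to the distribution of the values themselves, i.e.\ treat the sequence $m=P(\mathbf n)$ with multiplicities, and smooth over the size of $m$. The circle-method asymptotic says the pushforward measure of $P$ on $[N]^k$ is, on residue classes and on intervals of length $N^{d-\delta}$, close to $\SingSeries$-weighted Lebesgue measure. Then $\sum_{m}a_N(m)\mathbf 1_{p\mid m}\, g(m/p)$ can be compared with a smooth model in which $m/p$ ranges over integers with weight depending only on local densities. The same is true for $p'$, and the weights match because $p,p'$ are close in size. The error terms are controlled by minor-arc savings since $p,p'\le N^{\eta}$.

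I expect the main obstacle to be this uniformity in (b): $g(m)=f(T^{\Omega(m)}x)$ is completely arbitrary, so the comparison has to happen at the level of counts of $\mathbf n$ with $P(\mathbf n)=pm'$ for each $m'$. What the circle method gives directly is only an equidistribution statement for $P(\mathbf n)$ in arithmetic progressions and short intervals, not pointwise representation numbers. I would first try to avoid pointwise counts by using the Bergelson--Richter formulation in which only averages over $m'$ in progressions appear, combined with a Daboussi--K\'atai-type bilinear argument. If that fails, I would fall back on the pointwise asymptotic $r_P(m)\approx\SingSeries(m)\JJ(m/N^d)N^{k-d}$, valid for all $m$ with a power saving, which Birch's method provides under exactly the hypothesis $\B(P)>(d-1)2^{d-1}$. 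Plugging this in reduces (b) to the corresponding statement for $\Omega$ weighted by the multiplicative-type function $\SingSeries(m)$, which can be handled by the Bergelson--Richter argument for weighted averages.
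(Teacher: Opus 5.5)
There is a genuine gap, and it is at your input (b). To run the Bergelson--Richter argument for the pushforward of the uniform measure on $[N]^k$ under $P$, you need the following for an \emph{arbitrary} bounded $G$: the quantities $\rho(p)^{-1}\sum_{m'}r_P(pm')G(m')$ and $\rho(p')^{-1}\sum_{m'}r_P(p'm')G(m')$ must be close. That is an $\ell^1$ comparison of the conditional laws of $P(\mathbf n)/p$ and $P(\mathbf n)/p'$. You rightly flag this as the obstacle, but neither of your resolutions works as stated.

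The Daboussi--K\'atai route is only named. The fallback rests on a false claim. Birch's method gives a pointwise asymptotic for $\#\{\mathbf n\in[N]^k:P(\mathbf n)=m\}$ only when $\B(P)>(d-1)2^{d}$ (compare Proposition~\ref{input:Birch}), which is twice the hypothesis of the theorem. Under $\B(P)>(d-1)2^{d-1}$ one may have $d=2$ and rank $3$, as in Corollary~\ref{cor:squaresplusone}. There Birch's method yields no pointwise formula for $r_P(m)$, and the singular series involves $L(1,\chi)$ for a quadratic character depending on $m$. So even the ``weighted Bergelson--Richter'' step would be a new and hard problem. Moreover, the local factors of $\SingSeries(pm')$ and $\SingSeries(p'm')$ differ at primes $\ell\ne p,p'$, so (b) would not follow formally from such a formula anyway.

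The missing idea is that the hypothesis is calibrated to a second-moment argument in which the arithmetic of $\Omega$ enters only as a black box. Put $b_x(t)=f(T^{\Omega(t)}x)-\av f$. By Bergelson--Richter in arithmetic progressions, $b_x$ has vanishing rational means (Lemma~\ref{lem_ap}, Corollary~\ref{lem:arithmetic}). Write $\sum_{\mathbf n}b_x(P(\mathbf n))=\int_{\T}S_{P,N}(\alpha)B_N(\alpha)\dd\alpha$ with $B_N(\alpha)=\sum_{|t|\le CN^d}b_x(t)e(-t\alpha)$.
\begin{itemize}
\item On the fixed arcs $\M_N(Q)$, partial summation gives $\sup|B_N|=o_Q(N^d)$, while $|\M_N(Q)|\ll Q^3N^{-d}$.
\item On $\m_N(Q)$, Cauchy--Schwarz and Parseval ($\|B_N\|_2\ll N^{d/2}$) reduce everything to $\int_{\m_N(Q)}|S_{P,N}|^2=o(N^{2k-d})$ in the iterated limit (Proposition~\ref{prop:transfer}).
\item The integral $\int_{\T}|S_{P,N}|^2$ counts solutions of $P(\mathbf u)=P(\mathbf v)$. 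The leading form $H(\mathbf u)-H(\mathbf v)$ has Birch rank $2\B(P)>(d-1)2^d$, so Birch's theorem gives the full integral as $\SingSeries_P\JJ_HN^{2k-d}(1+o(1))$.
\item The fixed major arcs capture $\SingSeries_P(Q)\JJ_H(Q)N^{2k-d}$. Hence the minor-arc contribution is the tail, which vanishes as $Q\to\infty$ (Proposition~\ref{prop:Birchenergy}).
\end{itemize}
This is why the exponent is $2^{d-1}$ rather than $2^d$. No representation numbers, Lang--Weil input, or Tur\'an--Kubilius estimate for $P(\mathbf n)$ is needed.
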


The  condition \eqref{eq:rankcondition} is from 
Birch's classical work  \cite{Birch} on forms in many variables. One may improve  \eqref{eq:rankcondition}  by using refinements of Birch's theorem, such as those in \cite{DW2026}. Theorem~\ref{thm:Birch} gives the following two concrete consequences for  quadratic polynomials.

\begin{corollary}\label{cor:quadratic}
Let  $(X,\nu,T)$ be a uniquely ergodic system. Let $Q\in\Z[X_1,\ldots,X_k]$ be a quadratic polynomial, written as
\[
 Q(\mathbf X)=\mathbf X^{\mathsf T}A\mathbf X+L(\mathbf X)+c,
\]
where $A$ is a symmetric rational matrix, $L$ is linear, and $c$ is
constant. If $\rank A\ge3$, then for every $x\in X$ and every $f\in C(X)$, we have
\begin{equation}\label{eq:quadraticmain}
 \lim_{N\to\infty}\frac1{N^k}
 \sum_{\mathbf n\in[N]^k}
 f\bigl(T^{\Omega(Q(\mathbf n))}x\bigr)=\av f.
\end{equation}

\end{corollary}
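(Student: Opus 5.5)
The plan is to deduce Corollary~\ref{cor:quadratic} directly from Theorem~\ref{thm:Birch} with $d=2$. The only work is to check two things: that $Q$ has degree exactly $2$, and that the Birch rank of its top homogeneous part equals $\rank A$.

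First, the degree and the hypothesis. Since $\rank A\ge3$, the matrix $A$ is nonzero. Hence the quadratic form $Q_2(\mathbf X)=\mathbf X^{\mathsf T}A\mathbf X$ is a nonzero polynomial, because a symmetric matrix is determined by its quadratic form. It is the highest-degree homogeneous part of $Q$, so $\deg Q=2$. Because $Q\in\Z[X_1,\ldots,X_k]$, the form $Q_2$ has integer coefficients, even though $A$ itself may have half-integer off-diagonal entries. So $Q_2$ is a legitimate nonzero homogeneous polynomial in $\Z[X_1,\ldots,X_k]$ of degree $2$. For $d=2$, condition \eqref{eq:rankcondition} reads $\B(Q)>(2-1)2^{1}=2$, that is, $\B(Q)\ge3$.

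Next, compute the Birch rank. We have $\nabla Q_2(\mathbf n)=2A\mathbf n$. Therefore
\[
\Crit(Q_2)=\{\mathbf n\in\C^k: A\mathbf n=0\}=\ker_{\C}A,
\]
which is a linear subspace of dimension $k-\rank A$. The rank of a rational matrix is the same over $\Q$ and over $\C$. By \eqref{eq:Birchdef},
\[
\B(Q)=\B(Q_2)=k-(k-\rank A)=\rank A\ge3>2 .
\]
So \eqref{eq:rankcondition} holds, and Theorem~\ref{thm:Birch} gives \eqref{eq:quadraticmain} for every $x\in X$ and every $f\in C(X)$.

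There is no genuine obstacle here. The only points needing care are that the normalization $\mathbf X^{\mathsf T}A\mathbf X$ with rational $A$ does not change the kernel, and that negative or zero values of $Q(\mathbf n)$ are covered by the conventions $\Omega(t)=\Omega(|t|)$ and $\Omega(0)=0$. These are exactly the conventions under which Theorem~\ref{thm:Birch} is stated.
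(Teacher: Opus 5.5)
Your proposal is correct and is the paper's argument: $\nabla H=2A\mathbf X$ gives $\Crit(H)=\ker_{\C}A$, hence $\B(H)=\rank A>2=(d-1)2^{d-1}$, and Theorem~\ref{thm:Birch} with $d=2$ applies. The extra checks you include are sound details that the paper leaves implicit: $A\neq0$ so $\deg Q=2$, the leading form has integer coefficients, and the rank is the same over $\Q$ and $\C$.
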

Indeed, the leading form is $H(\mathbf X)=\mathbf X^{\mathsf T}A\mathbf X$.
Since $\nabla H=2A\mathbf X$,
\[
 \Crit(H)=\ker_{\C}A,\qquad \B(H)=\rank A>2.
\]
Theorem~\ref{thm:Birch} with ${d}=2$ proves Corollary~\ref{cor:quadratic}.  

It is a well-known conjecture in analytic number theory that there are infinitely many positive integers $n$ such that $n^2+1$ is prime. Chowla's conejcture states that 
$$\lim_{N\to\infty} \frac1N\sum_{n\leq N}\lambda(n^2+1)=0,
$$
which remains open. As a consequence of Corollary~\ref{cor:quadratic}, we prove that this holds for the sum of three squares plus one. 

\begin{corollary}\label{cor:squaresplusone}
 Let $k\ge3$. For every uniquely ergodic system $(X,\nu,T)$, every
$x\in X$, and every $f\in C(X)$, we have
\begin{equation}\label{eq:squaresplusone}
 \lim_{N\to\infty}\frac1{N^k}
 \sum_{n_1,\ldots,n_k=1}^N
 f\!\left(T^{\Omega(n_1^2+\cdots+n_k^2+1)}x\right)=\av f.
\end{equation}
In particular,
\begin{equation}
\lim_{N\to\infty} \frac1{N^3}\sum_{n_1,n_2,n_3=1}^N\lambda(n_1^2+n_2^2+n_3^2+1)=0
\end{equation}
\end{corollary}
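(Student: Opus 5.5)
The plan is to deduce Corollary~\ref{cor:squaresplusone} directly from Corollary~\ref{cor:quadratic}, and then read off the Liouville statement by choosing a suitable uniquely ergodic system.

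\emph{First part.} Fix $k\ge3$ and set
\[
Q(\mathbf X)=X_1^2+\cdots+X_k^2+1,
\]
which has integer coefficients. Written in the form of Corollary~\ref{cor:quadratic}, it has $A=I_k$, $L=0$ and $c=1$. Hence $\rank A=k\ge3$, the hypothesis is satisfied, and \eqref{eq:quadraticmain} is exactly \eqref{eq:squaresplusone}.

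There is one bookkeeping point: $\Omega$ is evaluated at $Q(\mathbf n)$ itself. This causes no trouble, because $Q(\mathbf n)\ge 4$ for $\mathbf n\in[N]^k$, so $Q(\mathbf n)$ is positive and never $0$ or $1$. The conventions for $\Omega$ at $0$ and at negative arguments therefore play no role.

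\emph{Second part.} Take $X=\Z/2\Z$ with the rotation $T(y)=y+1$, and let $\nu$ be the uniform measure. This system is uniquely ergodic: any $T$-invariant probability measure must give equal mass to the two points, so it coincides with $\nu$.

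Now take $x=0$ and $f(y)=(-1)^y$, which is continuous on the finite discrete space $X$. Then
\[
f\bigl(T^{\Omega(m)}0\bigr)=(-1)^{\Omega(m)}=\lambda(m),
\qquad \av f=\tfrac12(1-1)=0.
\]
Applying \eqref{eq:squaresplusone} with $k=3$ gives the displayed limit for $\lambda(n_1^2+n_2^2+n_3^2+1)$.

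\emph{Where the difficulty lies.} Essentially no step here is an obstacle, since all the analytic content is contained in Theorem~\ref{thm:Birch}. The only point requiring care is checking the Birch rank hypothesis, $\B(Q)=\rank A=k>2=({d}-1)2^{{d}-1}$ with ${d}=2$, and this is already verified in the remark following Corollary~\ref{cor:quadratic}.
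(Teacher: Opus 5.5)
Your proposal is correct and follows the paper's route: the paper deduces this corollary from Corollary~\ref{cor:quadratic} (that is, Theorem~\ref{thm:Birch} with $d=2$), applied to $X_1^2+\cdots+X_k^2+1$, whose leading form has rank $k\ge3>2$. Your two-point rotation on $\Z/2\Z$ with $f(y)=(-1)^y$ correctly supplies the Liouville specialization, which the paper leaves implicit.
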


The lower bound in \eqref{eq:rankcondition} grows exponentially
in $d$. For signed sums of powers, we can reduce this
exponential bound to a bound of quadratic order in $d$. Fix signs
$\boldsymbol\varepsilon=(\varepsilon_1,\ldots,\varepsilon_k)
\in\{1,-1\}^k$, and let the signed sum of  $n_j^{d}$ be the polynomial defined by
\begin{equation}\label{eq:signedform}
 P_{\boldsymbol\varepsilon}(\mathbf n)
 =\sum_{j=1}^k\varepsilon_j n_j^{d},
\end{equation}

\begin{theorem}\label{thm:diagonal}
Let  $(X,\nu,T)$ be a uniquely ergodic system. Let ${d}\ge2$ be an integer. For ${d}\ge4$, put
\[
 D_{d}={d}({d}+1),\qquad a_j=\min\{2^j,j(j+1)\},
 \qquad
 \Theta_{d}=\min_{1\le j\le{d}-1}
 \left\{D_{d}-\frac{D_{d}-a_j}{{d}+1-j}\right\}.
\]
Define
\begin{equation}\label{eq:K}
 K({d})=
 \begin{cases}
  2,&{d}=2,\\
  4,&{d}=3,\\
  \left\lfloor\Theta_{d}/2\right\rfloor+1,&{d}\ge4.
 \end{cases}
\end{equation}
Let $k\ge K({d})$ and let
$\boldsymbol\varepsilon\in\{1,-1\}^k$ be arbitrary. Then for every $x\in X$ and every $f\in C(X)$, we have
\begin{equation}\label{eq:diagonalmain}
 \lim_{N\to\infty}\frac1{N^k}
 \sum_{\mathbf n\in[N]^k}
 f\bigl(T^{\Omega(P_{\boldsymbol\varepsilon}(\mathbf n))}x\bigr)=\av f.
\end{equation}
\end{theorem}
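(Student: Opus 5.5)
The plan is to run the Bergelson--Richter strategy, with the prime-counting input replaced by a circle-method estimate for the diagonal form $P_{\boldsymbol\varepsilon}$.

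By unique ergodicity it suffices to show, for every $f\in C(X)$ with $\av f=0$ and every $x$, that the averages
\[
 A_N=\frac1{N^k}\sum_{\mathbf n\in[N]^k}f\bigl(T^{\Omega(P_{\boldsymbol\varepsilon}(\mathbf n))}x\bigr)
\]
tend to $0$. In \cite{BR} this follows from a combinatorial criterion: there are a weight $w$ and two finite sets of primes $B_1,B_2$, supported on disjoint ranges and chosen with a prescribed almost-equal weighted density. One then shows that multiplying by $p\in B_1$ versus $q\in B_2$ shifts $\Omega$ by exactly one. Combined with a Turán--Kubilius type orthogonality, this yields
\[
 A_N\approx \text{average of }A_N\text{ composed with }T
\]
up to an error $\varepsilon$. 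Iterating and averaging over the orbit then gives $A_N\to\av f=0$.

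The arithmetic inputs needed for the sequence $a(\mathbf n)=P_{\boldsymbol\varepsilon}(\mathbf n)$ are two. First, an asymptotic for
\[
 \#\{\mathbf n\in[N]^k: m\mid P_{\boldsymbol\varepsilon}(\mathbf n)\}
\]
with main term $N^k\rho(m)/m^{k}$, uniformly for squarefree $m$ up to a small power of $N$. Here $\rho(m)$ denotes the number of solutions modulo $m$. Second, $\rho(p)/p^{k-1}=1+O(p^{-1-\delta})$ on average over $p$, so that the local density of divisibility by $p$ is $1/p$ up to summable errors. This is exactly what makes $\Omega(a(\mathbf n))$ behave like $\Omega$ of a typical integer of size $N^{d}$. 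The Turán--Kubilius inequality and the pair-correlation step (divisibility by $pq$) then go through as in the one-variable case. Multiplicativity of $\rho$ reduces these to prime moduli.

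The key step is the divisor-counting asymptotic with a power-saving error that is uniform in the modulus. I would write $\mathbf n=\mathbf r+m\mathbf y$ and detect the condition $P_{\boldsymbol\varepsilon}(\mathbf n)\equiv 0 \pmod m$ by additive characters. This reduces matters to bounding
\[
 \sum_{\mathbf n\in[N]^k}e\bigl(aP_{\boldsymbol\varepsilon}(\mathbf n)/m\bigr)=\prod_j S_j,
\]
where each $S_j$ is a one-variable Weyl sum. For $d=2,3$ Gauss/Hua-type bounds give the needed saving with $k\ge2$ and $k\ge4$. For $d\ge4$ I would interpolate two estimates. One is Vinogradov mean value theorem (efficient congruencing/decoupling) with exponent $D_d=d(d+1)$. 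The other is Hua/Vinogradov moment $a_j$ for degree $j$ forms arising after fixing lower structure. The optimization over $j$ yields $\Theta_d$, and $k\ge\lfloor\Theta_d/2\rfloor+1$ ensures $\prod S_j$ has enough moments for the minor-arc contribution to be $o(N^k)$ even after summing over $m\le N^{\eta}$.

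The main obstacle is getting this minor-arc bound with enough room to sum over moduli while keeping the threshold $K(d)$ as stated. I would first try treating the $m$-sum directly as rational points $a/m$ on major arcs of height $N^\eta$ with mean-value bounds. Positivity of $P_{\boldsymbol\varepsilon}$ fails for mixed signs, so $\Omega(|\cdot|)$ is used and zeros of $P_{\boldsymbol\varepsilon}$ must be handled. Their count is $O(N^{k-d+\epsilon})$ by the same circle-method bound, so they are negligible.
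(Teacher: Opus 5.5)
The gap is in the step you pass over with ``then go through as in the one-variable case'': the Bergelson--Richter shift mechanism does not transfer to polynomial values. In \cite{BR} it rests on the exact identity $\sum_{n\le N,\,p\mid n}G(n)=\sum_{m\le N/p}G(pm)$ together with $\Omega(pm)=\Omega(m)+1$. After Tur\'an--Kubilius, each $p$-weighted piece is therefore again an average over an initial segment $[1,N/p]$ of the integers. Comparing primes in $B_1$ with semiprimes in $B_2$ then only requires their logarithmic weights to match at fine scales.

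For $P_{\boldsymbol\varepsilon}$, Tur\'an--Kubilius (which is all your divisor counts and local densities provide) leaves you with $\sum_{p\in B_1}\sum_{\mathbf n:\,p\mid P_{\boldsymbol\varepsilon}(\mathbf n)}g\bigl(T^{\Omega(P_{\boldsymbol\varepsilon}(\mathbf n)/p)+1}x\bigr)$. The numbers $P_{\boldsymbol\varepsilon}(\mathbf n)/p$ are not values of $P_{\boldsymbol\varepsilon}$, and they are not linked to $[1,N^d/p]$ or to each other as $p$ varies. To compare the $B_1$- and $B_2$-expressions you would need to know how $\Omega$ is distributed on $P_{\boldsymbol\varepsilon}(\mathbf n)/p$ along $p\mid P_{\boldsymbol\varepsilon}(\mathbf n)$. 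That is the kind of statement you are trying to prove, and information about $P_{\boldsymbol\varepsilon}(\mathbf n)$ modulo $m$ cannot supply it.

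Your ``key step'' is also misplaced. For $m\le N^{\eta}$, splitting $\mathbf n$ into residue classes gives $\rho(m)(N/m)^k+O(\rho(m)(N/m)^{k-1})$ directly, and the points $a/m$ lie on major arcs anyway. So no minor-arc moment bound, and hence no threshold $K(d)$, genuinely enters your argument. As a sanity check, the Tur\'an--Kubilius inputs are equally available for $P(n)=n^2+1$: divisor counts are trivial and $\sum_{p\le y}\rho(p)/p=\log\log y+O(1)$. If the shift step went through, you would get $\frac1N\sum_{n\le N}\lambda(n^2+1)\to0$, which is open.

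What is missing is a transference from polynomial values to integers, and that is exactly where $k\ge K(d)$ is used. The paper writes $\sum_{\mathbf n}b_x(P(\mathbf n))=\int_{\T}S_{P,N}B_N\dd\alpha$. It uses the arithmetic-progression version of \eqref{eq:BRintro} (\cite[Corollary 1.16]{BR}) to give $b_x$ vanishing rational means, so $B_N=o_Q(N^d)$ on the fixed arcs $\M_N(Q)$. It handles $\m_N(Q)$ by Cauchy--Schwarz and Parseval. Via $|S_{P_{\boldsymbol\varepsilon},N}|^2=|W_{d,N}|^{2k}$, everything reduces to $N^{d-2k}\int_{\m_N(Q)}|W_{d,N}|^{2k}\dd\alpha\to0$ in the iterated limit.

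This bound comes from \cite{BW} for $d=3$. For $d\ge4$ it comes from H\"older interpolation between Wooley's minor-arc bound for the $D_d$-th moment and the Hua/Bourgain bounds $N^{a_j-j+\eta}$ for the $a_j$-th moment of the \emph{same} degree-$d$ Weyl sum, followed by pruning. That interpolation, not lower-degree forms, is the origin of $\Theta_d$.

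The case $d=2$, $k\ge3$ follows from Birch's theorem. For $d=k=2$ no such argument exists, since $\int_{\T}|W_{2,N}|^4\dd\alpha\asymp N^2\log N$ makes the minor-arc energy diverge. There the paper relies on special structure instead: \cite[Theorem D]{DLMS} for $n_1^2+n_2^2$, and the factorization $n^2-m^2=(n-m)(n+m)$ together with a uniform version of \eqref{eq:BRintro} for $n_1^2-n_2^2$.
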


The sufficient bounds in Theorem~\ref{thm:diagonal}  are
\begin{center}
\begin{tabular}{c*{11}{r}}
\toprule
${d}$ & $2$ & $3$ & $4$ & $5$ & $6$ & $7$ & $8$ & $9$ & $10$ & $11$ & $12$\\
\midrule
$K({d})$ & $2$ & $4$ & $8$ & $12$ & $17$ & $24$ & $31$ & $39$ & $49$ & $59$ & $70$\\
\bottomrule
\end{tabular}
\end{center}
Moreover, Proposition~\ref{prop:thresholdcomparison} shows that, for
${d}\ge4$,
\[
 K({d})\le \frac{{d}({d}-1)}2+
 \left\lfloor\sqrt{2{d}+2}\right\rfloor =\frac{d^2}{2}-\frac d2+\sqrt{2d}+O(1),
\]
which is of quadratic growth. 

In \cite{QZ}, Qi and Zheng asked whether \eqref{conj_multi_poly_eqn} holds for $P(n_1,n_2)=n_1^3+n_2^3$.
Allowing more variables, Theorem~\ref{thm:diagonal} shows that
\eqref{conj_multi_poly_eqn} holds for the sum of four cubes.

\begin{corollary}\label{cor:cubic}
  For every uniquely ergodic system $(X,\nu,T)$, every
$x\in X$, and every $f\in C(X)$, we have
\begin{equation}\label{eq:cubic}
 \lim_{N\to\infty}\frac1{N^4}
 \sum_{n_1,n_2,n_3,n_4=1}^N
 f\!\left(T^{\Omega(n_1^3+n_2^3+n_3^3+n_4^3)}x\right)=\av f.
\end{equation}
\end{corollary}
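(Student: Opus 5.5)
The plan is to obtain Corollary~\ref{cor:cubic} as the case $d=3$, $k=4$, $\boldsymbol\varepsilon=(1,1,1,1)$ of Theorem~\ref{thm:diagonal}. By \eqref{eq:K}, $K(3)=4$, so the hypothesis $k\ge K(3)$ holds with $k=4$. The signed form \eqref{eq:signedform} with all signs positive is exactly $n_1^3+n_2^3+n_3^3+n_4^3$. This sum is a positive integer for $\mathbf n\in[N]^4$, so $\Omega$ is evaluated at a positive argument, consistent with the conventions in the introduction. Then \eqref{eq:diagonalmain} is literally \eqref{eq:cubic}, and the deduction is immediate once Theorem~\ref{thm:diagonal} is granted.

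Since the substance lies in Theorem~\ref{thm:diagonal} at $d=3$, it is worth saying how that case would be proved. Following Bergelson--Richter, unique ergodicity reduces \eqref{eq:cubic} to two facts about the value distribution of $\Omega$ along $P(\mathbf n)=\sum n_j^3$.

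The first is an Erd\H{o}s--Kac/Tur\'an--Kubilius type statement. For primes $p$ in a suitable range, the local densities $\rho(p)/p^4$ should behave like $1/p$ on average, where $\rho(p)=\#\{\mathbf n\bmod p: p\mid P(\mathbf n)\}$. This is because $\{P=0\}$ is a smooth cubic threefold, so Weil/Deligne bounds give $\rho(p)=p^3+O(p^{2})$.

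The second, which is the crux, is equidistribution of $P(\mathbf n)$ in residue classes modulo $q$ and in short multiplicative intervals. This is the form needed for the Bergelson--Richter identity comparing $\sum f(T^{\Omega(P(\mathbf n))}x)$ with $\sum f(T^{\Omega(P(\mathbf n))+1}x)$, via $\Omega(pm)=\Omega(m)+1$.

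Concretely, I would establish an asymptotic for
\[
\#\{\mathbf n\in[N]^4:\ P(\mathbf n)\equiv 0 \pmod q\}
\]
uniformly for $q$ up to a small power of $N$, with error $O(N^{4-\delta})$. I would do this by the circle method (Weyl and Hua bounds for cubic exponential sums), with singular series computed from the local densities above.

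The main obstacle is that four cubes lie below the classical Hua/Vaughan range for an asymptotic for individual values. What rescues the argument is that only averaged congruence statements are needed, not representation numbers. So I would use Hua's inequality with $2^{d}=8$ moments via Cauchy--Schwarz over the variables, which is exactly where $K(3)=4=8/2$ enters. This gives power savings on minor arcs for the mean over $\mathbf n$, twisted by additive characters modulo $q$.
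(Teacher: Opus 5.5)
Your deduction is correct and is exactly the paper's proof: Corollary~\ref{cor:cubic} is the case $d=3$, $k=4=K(3)$, $\boldsymbol\varepsilon=(1,1,1,1)$ of Theorem~\ref{thm:diagonal}, and \eqref{eq:diagonalmain} is then literally \eqref{eq:cubic}.

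Your supplementary sketch of the $d=3$ case of Theorem~\ref{thm:diagonal}, however, would not work, for two reasons.

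\emph{The two ingredients are not enough.} You isolate Tur\'an--Kubilius for $\rho(p)$ and equidistribution of $P(\mathbf n)$ in residue classes and short multiplicative intervals. Both hold, with room to spare, for $P(n)=n^2+1$, where even $\sum_{n\le N}\lambda(n^2+1)=o(N)$ is open, as the introduction notes. The Bergelson--Richter identity relies on the averaging set being stable under $m\mapsto pm$: the multiples of $p$ in $[N]$, divided by $p$, are $\{1,\dots,\lfloor N/p\rfloor\}$. That is what lets averages of $f(T^{\Omega(m)+1}x)$ be compared across primes. For $m=P(\mathbf n)/p$ there is no such structure, and congruence data on $P(\mathbf n)$ say nothing about $\Omega(P(\mathbf n)/p)$. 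Incidentally, the congruence count you planned to get by the circle method is elementary: splitting into residue classes gives $\rho(q)N^4q^{-4}+O(qN^3)$.

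\emph{There is no power saving at exponent $8$.} Hua's inequality at exponent $2^3=8$ gives only $\int_{\T}|W_{3,N}|^8\ll N^{5+\epsilon}$. Weyl's inequality combined with Hua on the minor arcs also gives only $N^{5+\epsilon}$. Eight is exactly the endpoint where Hua's method stops giving a saving.

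\emph{What the paper does instead.} It uses Bergelson--Richter only along arithmetic progressions of integers, to get vanishing rational means of $b_x$ (Corollary~\ref{lem:arithmetic}). It then transfers to polynomial values through $\sum_{\mathbf n}b(P(\mathbf n))=\int_{\T}S_{P,N}B_N\dd\alpha$ (Proposition~\ref{prop:transfer}). The only input needed about $P$ is that $N^{d-2k}\int_{\mathfrak m_N(Q)}|S_{P,N}|^2\dd\alpha\to0$ in the iterated limit. For four cubes, $|S_{P,N}|^2=|W_{3,N}|^8$, and only $o(N^5)$ is required, not a power saving. Proposition~\ref{input:endpoint} obtains this from Boklan--Wooley's logarithmic minor-arc bound $\int_{\widehat{\mathfrak m}_N}|W_{3,N}|^8\ll N^5(\log N)^{-2}$, together with pruning via \eqref{eq:cubic-broad-pointwise}. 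That is where $k=4=2^3/2$ actually enters.
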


\subsection{Generalizations to  integer points in algebraic varieties}

For a tuple $\mathbf A=(A_1,\ldots,A_s)$ of homogeneous forms of the
same degree in $m$ variables, put
\begin{equation}\label{var:eq:system-rank}
 \Crit(\mathbf A)=
 \{\mathbf z\in\C^m:\rank J_{\mathbf A}(\mathbf z)<s\},
 \qquad
 \B(\mathbf A)=m-\dim_{\C}\Crit(\mathbf A).
\end{equation}
Here $J_{\mathbf A}$ is the matrix with rows $\nabla A_i$.
We set $\B(\varnothing)=+\infty$. Write
\[
 C_d=(d-1)2^{d-1},\qquad K_{r,d}=(2r+1)(2r+2)C_d.
\]
All boxes below are fixed bounded boxes with sides parallel to the
coordinate axes and with nonempty interior. 

\begin{theorem}
\label{var:thm:main}
Let $F_1,\ldots,F_r,P\in\Z[x_1,\ldots,x_n]$ have common degree
$d\ge2$, and write
\[
 \mathbf F=(F_1,\ldots,F_r),\qquad
 \mathbf G=((F_1)_d,\ldots,(F_r)_d),\qquad H=P_d.
\]
Suppose
\begin{equation}\label{var:eq:main-rank}
 \min\{\B(\mathbf G),\,2\B(\mathbf G,H)\}>K_{r,d}.
\end{equation}
Let $\mathcal R\subset\R^n$ be a box. Assume that $\mathbf G=0$
has a nonsingular real solution in $\mathcal R^\circ$, and that
$\mathbf F=0$ has a nonsingular solution in $\Z_p^n$ for every prime
$p$.  Set
\[
 V_N=\{\mathbf x\in N\mathcal R\cap\Z^n:\mathbf F(\mathbf x)=0\}.
\]
Then, for every uniquely ergodic system $(X,\nu,T)$, every $x\in X$,
and every $f\in C(X)$,
\begin{equation}\label{var:eq:main-limit}
 \lim_{N\to\infty}\frac{1}{|V_N|}
 \sum_{\mathbf z\in V_N}f(T^{\Omega(P(\mathbf z))}x)
 =\av f.
\end{equation}
\end{theorem}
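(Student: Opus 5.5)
The plan is to follow the Bergelson--Richter strategy and replace their input (PNT-type and equidistribution facts for $n$) by circle-method information for the family of values $P(\mathbf z)$, $\mathbf z\in V_N$. The criterion we aim for is this. Suppose that for every fixed $q\ge1$ and $a$, and for the relevant primes $p$ in the Bergelson--Richter averaging, the normalized counting measure on $V_N$ satisfies two conditions:
\begin{enumerate}
\item[(i)] the distribution of $P(\mathbf z)$ modulo $q$ converges to the $q$-adic density given by the singular series;
\item[(ii)] for primes $p\ne p'$ up to a slowly growing bound, the densities of $\{\mathbf z\in V_N: p\mid P(\mathbf z)\}$ and $\{\mathbf z\in V_N: pp'\mid P(\mathbf z)\}$ are $\approx \rho(p)/p$ and $\approx \rho(p)\rho(p')/(pp')$, uniformly enough.
\end{enumerate}
Then the Tur\'an--Kubilius-type identity $\Omega(P(\mathbf z))\approx\Omega(P(\mathbf z))+1$ in distribution, after averaging over a suitable set of primes, gives $T$-invariance of every weak$^*$ limit point of the empirical measures $\frac1{|V_N|}\sum_{\mathbf z\in V_N}\delta_{T^{\Omega(P(\mathbf z))}x}$. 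Unique ergodicity then forces the limit to be $\nu$.

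Concretely, we would first record that Birch's theorem for the system $\mathbf F$ gives $|V_N|\sim \SingSeries\,\JJ\,N^{n-rd}$ with $\SingSeries,\JJ>0$. This uses $\B(\mathbf G)>K_{r,d}\ge 2^{d-1}(d-1)r(r+1)$, the nonsingular real point in $\mathcal R^\circ$, and the $p$-adic nonsingular solutions. Next we prove a congruence-restricted version, counting $\mathbf z\in V_N$ with $\mathbf z\equiv \mathbf b \pmod m$ for $m\le N^{\eta}$. Substituting $\mathbf z=m\mathbf y+\mathbf b$ and running Birch's argument with explicit dependence on $m$ should give the main term $m^{-n}\cdot(\text{local factor})\cdot N^{n-rd}$ with a power-saving error uniform in $m$. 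Summing over residues gives (i) and the density of $m\mid P(\mathbf z)$, namely $\rho_{\mathbf F,P}(m)/m$, where $\rho$ is a multiplicative local density.

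The key extra input is a lower-order/upper bound for the local densities $\rho(p)$ that makes the Bergelson--Richter sums behave. We need $\sum_p \rho(p)/p$ to diverge along primes, like $\log\log$; equivalently, $P$ must not be a $p$-adic unit too often on $V$. For this we use the system $(\mathbf F,P)$: the rank hypothesis $2\B(\mathbf G,H)>K_{r,d}$ should make the variety $\{\mathbf F=0,P=0\}$ absolutely irreducible of the expected codimension $r+1$ and nonsingular enough. Lang--Weil or Birch-type point counts mod $p$ then give $\rho(p)=1+O(p^{-1/2})$ for all large $p$.

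Given these, we would reproduce the Bergelson--Richter proof. Following \cite{BR}, choose sets of primes $B_1,\dots$ with comparable $\sum_{p\in B}\rho(p)/p$ and use their combinatorial identity: compare averages of $f(T^{\Omega(P(\mathbf z))}x)$ with averages over $\mathbf z$ with $p\mid P(\mathbf z)$, weighted by $1/p$. A second-moment (Tur\'an--Kubilius) estimate over $V_N$, which uses (ii), shows that $\#\{p\in B: p\mid P(\mathbf z)\}$ concentrates around its mean. The step $P(\mathbf z)=pw\Rightarrow \Omega(P(\mathbf z))=\Omega(w)+1$ requires relating the distribution of $\mathbf z$ with $p\mid P(\mathbf z)$ to that of $\mathbf z$ with $q\mid P(\mathbf z)$ for primes $p,q$ of comparable size. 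In \cite{BR} this is done by the rescaling $n\mapsto pn$. That rescaling is unavailable on $V_N$, and we expect this to be the main obstacle.

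We would first try to replace the rescaling by the equidistribution itself. We would show that for $p,q\in B$ of comparable size the two conditional empirical measures $\frac{p}{\rho(p)|V_N|}\sum_{p\mid P(\mathbf z)}\delta_{T^{\Omega(P(\mathbf z)/p)}x}$ agree asymptotically. This would be done by an induction/bootstrapping that uses the uniform congruence counting together with the fact that the residual distribution of $\Omega(P(\mathbf z)/p)$ depends only on local densities away from $p$. If that fails, the fallback is the Qi--Zheng criterion \cite{QZ}, whose hypotheses are essentially (i) and (ii) with uniform error, which the uniform Birch asymptotic supplies.
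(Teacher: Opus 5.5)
\textbf{There is a genuine gap, and it is exactly the step you flag as the main obstacle.} It is not a technicality.

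Your criterion, that (i) and (ii) give $T$-invariance of weak$^*$ limit points, is false as stated. Take $X=\Z/2\Z$ with $T$ the shift; the theorem then contains the statement that $\lambda(P(\mathbf z))$ has mean $0$ on $V_N$. Consider the measures on $1\le n\le N$ with weights $(1+\lambda(n))/N$, whose total mass tends to $1$. By the prime number theorem for $\lambda$ in progressions, these measures have the same densities as the uniform measure for $n\equiv a\pmod q$ with $q$ fixed, and for $p\mid n$, $pp'\mid n$ with $p,p'$ up to any slowly growing bound. So (i), (ii) and the Tur\'an--Kubilius concentration all hold, yet $\Omega$ is even on their support.

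This is Selberg's parity phenomenon, and it defeats your proposed replacement for the dilation. The assertion that the ``residual distribution of $\Omega(P(\mathbf z)/p)$ depends only on local densities away from $p$'' is precisely what this example refutes. In \cite{BR}, the comparison between the conditional measures for $p\mid n$ and $q\mid n$ comes from $n\mapsto pn$, a non-local input, and you have no substitute for it.

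As a sanity check, the ingredients your bootstrapping invokes are elementary for $P(n)=n^2+1$ on $[N]$: uniform congruence counts to level $N^\eta$, and local densities with $\sum_p\rho(p)/p=\infty$. So your argument would prove Chowla's conjecture for $n^2+1$, which is open. The fallback to \cite{QZ} does not close the gap either. That criterion is stated through exponential sums along $\Omega(|P(\mathbf n)|)$, which is parity-sensitive information that no Birch asymptotic supplies, and it concerns boxes, not $V_N$.

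\textbf{The missing idea} is not to run the Bergelson--Richter argument on $V_N$ at all. Instead, use their progression theorem \cite[Corollary 1.16]{BR} as a black box: $b_x(t)=f(T^{\Omega(t)}x)-\av f$ has vanishing rational means. By partial summation, $B_N(\alpha)=\sum_{|t|\le CN^d}b_x(t)e(-t\alpha)$ is then $o_Q(N^d)$ on the fixed major arcs $\M_N(Q)$, and $\|B_N\|_2\ll N^{d/2}$ by Parseval.

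Then write $\sum_{\mathbf z\in V_N}b_x(P(\mathbf z))=\int_\T S_{V,P,N}(\alpha)B_N(\alpha)\dd\alpha$, where $S_{V,P,N}(\alpha)=\sum_{\mathbf z\in V_N}e(\alpha P(\mathbf z))$.
\begin{enumerate}
\item[(a)] The major arcs are trivial. Here $|S_{V,P,N}|\le|V_N|\asymp N^h$ with $h=n-rd$ (by Birch, as you correctly noted), and $|\M_N(Q)|\ll_QN^{-d}$.
\item[(b)] After Cauchy--Schwarz, the minor arcs reduce to showing $N^{d-2h}\int_{\m_N(Q)}|S_{V,P,N}|^2\dd\alpha\to0$, with $N\to\infty$ first and then $Q\to\infty$.
\item[(c)] By orthogonality, $|S_{V,P,N}(\alpha)|^2$ is the $\T^{2r}$-integral of the exponential sum of the doubled system $(\mathbf F(\mathbf u),\mathbf F(\mathbf v),P(\mathbf u)-P(\mathbf v))$. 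This is a system of $2r+1$ forms in $2n$ variables, and $\T^{2r}\times\m_N(Q)$ lies off its major arcs.
\item[(d)] Birch's minor-arc $L^1$ bound, together with the major-arc tails, therefore applies once the doubled system has Birch rank $>(2r+1)(2r+2)C_d=K_{r,d}$.
\item[(e)] Its critical locus is $(S\times\C^n)\cup(\C^n\times S)\cup(S'\times S')$, where $S=\Crit(\mathbf G)$ and $S'=\Crit(\mathbf G,H)$. So that rank is exactly $\min\{\B(\mathbf G),2\B(\mathbf G,H)\}$.
\end{enumerate}
This, not irreducibility of $\{\mathbf F=P=0\}$ or control of $\rho(p)$, is what \eqref{var:eq:main-rank} is for.

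The parity problem never arises on $V_N$ in this route. The only parity-sensitive input concerns $\Omega$ on integers in progressions, where the dilation is available. The transfer to the values $P(\mathbf z)$ is a purely Fourier-analytic $L^2$ statement.
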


Under these hypotheses, Birch's theorem and its polynomial version give
\begin{equation}\label{var:eq:base-count}
 |V_N|=c_VN^h+O(N^{h-\delta}),\qquad
 h=n-rd,\quad c_V>0,\quad\delta>0.
\end{equation}
See \cite[Theorem~1.2]{BHB} and the paragraph
following it. For $r=0$, $c_V$ is the volume of $\mathcal R$. So for $r=0$ and $\mathcal R=(0,1]^n$,
\eqref{var:eq:main-rank} is $2\B(H)>2C_d$, 
Theorem~\ref{var:thm:main} recovers Theorem~\ref{thm:Birch} with
exactly the same rank bound \eqref{eq:rankcondition}. For a concrete example, one may take $r=1$, $F=x_1^2+\cdots+x_{24}^2-x_{25}^2$, and $P=\sum_{j=1}^{25} jx_j^2$. For $r=2$, let $m\ge31$ and  put
\[
Q_i(\mathbf x)=\sum_{j=1}^{m}x_{(i-1)m+j}^{2}, \quad i=1,2,3.
\]
Define
\[
F_1=Q_1+Q_2-2Q_3,\qquad
F_2=Q_1-2Q_2+Q_3,\qquad
P=Q_1+Q_2+Q_3.
\]
The equations $F_1=F_2=0$ are equivalent to $Q_1=Q_2=Q_3$.
Therefore, setting
\[
V_N=
\{\mathbf z\in[N]^{3m}:Q_1(\mathbf z)=Q_2(\mathbf z)=Q_3(\mathbf z)\},
\]
Theorem~\ref{var:thm:main} gives
\[
\lim_{N\to\infty}\frac1{|V_N|}
\sum_{\mathbf z\in V_N}
f\!\left(T^{\Omega(z_1^2+\cdots+z_{3m}^2)}x\right)
=
\int_X f\,d\nu
\]
for every $x\in X$ and every $f\in C(X)$. 

We now extend Theorem~\ref{thm:diagonal} to integer points constrained
by homogeneous equations.  Let $K(d)$ be defined by \eqref{eq:K}, and put
\begin{equation}\label{diagvar:eq:kstar}
 K_*(d)=\begin{cases}3,&d=2,\\ K(d),&d\ge3.\end{cases}
\end{equation}

\begin{theorem}
\label{diagvar:thm:main}
Let $r\ge1$, $d\ge2$, $A=(a_{ij})\in\Z^{r\times n}$, and
$\boldsymbol\varepsilon\in\{1,-1\}^n$. Define
\[
 F_i(\mathbf z)=\sum_{j=1}^n a_{ij}z_j^d\quad(1\le i\le r),
 \qquad P(\mathbf z)=\sum_{j=1}^n\varepsilon_j z_j^d,
 \qquad B=\begin{pmatrix}A\\\boldsymbol\varepsilon\end{pmatrix}.
\]
Suppose that every $r$ columns of $A$ are linearly independent, and
every $r+1$ columns of $B$ are linearly independent. Assume
\begin{equation}\label{diagvar:eq:variable-bound}
 n\ge(2r+1)K_*(d).
\end{equation}
Suppose also that $\mathbf F=0$ has a nonsingular real solution in
$(0,1)^n$ and a nonsingular solution in $\Z_p^n$ for every prime $p$.
Set
\[
 V_N=\{\mathbf z\in[N]^n:F_1(\mathbf z)=\cdots=F_r(\mathbf z)=0\}.
\]
Then, for every uniquely ergodic system $(X,\nu,T)$, every $x\in X$,
and every $f\in C(X)$,
\begin{equation}\label{diagvar:eq:main-limit}
 \lim_{N\to\infty}\frac1{|V_N|}
 \sum_{\mathbf z\in V_N}f(T^{\Omega(P(\mathbf z))}x)=\av f.
\end{equation}
\end{theorem}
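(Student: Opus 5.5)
We follow the Bergelson--Richter scheme, as in the proofs of Theorems~\ref{thm:Birch}, \ref{thm:diagonal} and \ref{var:thm:main}, and supply its arithmetic input by the circle method for diagonal systems. By unique ergodicity and the Bergelson--Richter criterion, \eqref{diagvar:eq:main-limit} follows once we show that the weighted measures $\frac1{|V_N|}\sum_{\mathbf z\in V_N}\delta_{P(\mathbf z)}$ are ``asymptotically invariant'' under $m\mapsto pm$ along primes. Concretely, we need two facts for the values $m=P(\mathbf z)$ with $\mathbf z\in V_N$. The first is a divisor-counting asymptotic: for each fixed squarefree $q$ (or prime $p$), the proportion of $\mathbf z\in V_N$ with $q\mid P(\mathbf z)$ equals $\rho(q)+o(1)$. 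Here $\rho$ is multiplicative, $\rho(p)=1/p+O(p^{-1-\eta})$, and the error is uniform for $q\le N^{\eta}$. The second is a level-of-distribution bound of Bombieri--Vinogradov type on average over $q$. With these, the Turán--Kubilius variance argument gives $\Omega(P(\mathbf z))=\Omega_{\le N^\eta}(P(\mathbf z))+O(1)$ for almost all $\mathbf z$. The Bergelson--Richter identity comparing $\sum_p \frac1p\,\mathbb 1_{p\mid m}\,f(T^{\Omega(m)}x)$ with $\sum_p\frac1p f(T^{\Omega(m/p)+1}x)$ then reduces the claim to the unique ergodicity of $T$.

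The arithmetic input reduces to counting $\mathbf z\in[N]^n$ with $\mathbf F(\mathbf z)=0$ and $\mathbf z\equiv\mathbf b\pmod q$, summed over residues with $P(\mathbf b)\equiv0\pmod q$. Substituting $\mathbf z=\mathbf b+q\mathbf y$, the diagonal structure is preserved up to lower-order terms. We apply the circle method to $r$ simultaneous equations with the exponential sum $\prod_{j}S_j(\boldsymbol\alpha)$, where $S_j(\boldsymbol\alpha)=\sum_{y}e\bigl(\sum_i\alpha_i a_{ij}(b_j+qy)^d\bigr)$. The number of variables needed is $K_*(d)$ per ``direction''. The rank condition, that every $r$ columns of $A$ are independent, is exactly what lets us partition the $n\ge(2r+1)K_*(d)$ variables into $2r+1$ blocks of size $\ge K_*(d)$, each block containing a basis for the span. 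This is the standard Davenport--Lewis / Brüdern--Cook treatment of diagonal systems. On minor arcs, Hölder's inequality across the blocks transfers the one-variable minor-arc saving for $K_*(d)$ variables, namely the Vinogradov mean value / Hua bound that defines $K(d)$ through $\Theta_d$, to the system. On major arcs we get $\SingSeries_q\JJ$, with the singular series factorising; positivity comes from the assumed nonsingular real and $p$-adic solutions. The extra condition on $B$ (every $r+1$ columns independent) is used in the same way for the augmented system $(\mathbf F,P)$. It lets us show that the local densities with the congruence $q\mid P$ imposed are $\rho(q)$ times the unconstrained densities, with $\rho(p)=p^{-1}+O(p^{-3/2})$. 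This holds because for almost all $p$ the variety $\mathbf F=P=0$ mod $p$ is a complete intersection of the expected codimension, and Lang--Weil/Deligne applies.

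For $d=2$ the value $K_*(2)=3$ replaces $K(2)=2$. This is because with only two squares per block the minor-arc estimate for systems is not quite strong enough, so we take $3$ and use the classical Kloosterman-free bound. For $d\ge3$ we reuse the one-dimensional mean-value input from Theorem~\ref{thm:diagonal} verbatim.

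The main obstacle is uniformity in $q$. We need the asymptotic for $\#\{\mathbf z\in V_N: q\mid P(\mathbf z)\}$ with a power saving that holds for $q$ up to a small power $N^{\eta}$, and also summed over $q$ for the variance/Bergelson--Richter step. To handle it we would track how the minor-arc bound for $S_j$ depends on $q$ after the substitution $\mathbf z=\mathbf b+q\mathbf y$: the box shrinks to length $N/q$, but the saving is still a power of $N/q$. We would then sum trivially over the $\asymp q^{n-1}$ residue classes $\mathbf b$. Choosing $\eta$ small relative to the power saving $\delta$ should close the argument, since the required Bergelson--Richter input only involves primes up to a slowly growing bound.
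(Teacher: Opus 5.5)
There is a genuine gap at the step you treat as routine: that ``the Bergelson--Richter identity \dots\ reduces the claim to the unique ergodicity of $T$.'' The identity $\Omega(m)=\Omega(m/p)+1$ for $p\mid m$ is trivial. What drives Bergelson--Richter for $m\le N$ is that $\{m\le N:p\mid m\}=p\cdot[N/p]$. The average of $f(T^{\Omega(m/p)+1}x)$ over these $m$ is therefore again an average of the same kind, at scale $N/p$, and comparing dilations by primes with dilations by products of two primes gives approximate $T$-invariance of limit measures. For $m=P(\mathbf z)$ with $\mathbf z\in V_N$, the quotients $P(\mathbf z)/p$ form no such family.

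Your proposed inputs cannot replace this structure, even in principle, because of the parity problem. Take $\mathcal A=\{m\le N:\lambda(m)=1\}$. Then $\#\{m\in\mathcal A:q\mid m\}=\frac12\lfloor N/q\rfloor+\frac{\lambda(q)}2\sum_{m'\le N/q}\lambda(m')$. So $\mathcal A$ has local densities exactly $\rho(q)=1/q$, level of distribution $N^{1-\epsilon}$, and satisfies Tur\'an--Kubilius. Yet $\lambda\equiv1$ on $\mathcal A$, so the conclusion fails for $X=\Z/2\Z$ with $T$ the flip and $f=\mathbf 1_{\{0\}}$. Divisor counts and Bombieri--Vinogradov bounds for the values $P(\mathbf z)$, however uniform, therefore cannot determine the distribution of $\Omega(P(\mathbf z))$ modulo shifts.

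The paper never re-runs Bergelson--Richter on polynomial values. It uses their arithmetic-progression theorem only to know that $b_x(t)=f(T^{\Omega(t)}x)-\av f$ has vanishing rational means, and transfers this by the circle method. Write $\sum_{\mathbf z\in V_N}b_x(P(\mathbf z))=\int_{\T}S_{V,P,N}(\alpha)B_N(\alpha)\dd\alpha$. On the fixed arcs $\M_N(Q)$, $\sup|B_N|=o_Q(N^d)$ and $|\M_N(Q)|\ll_Q N^{-d}$. On $\m_N(Q)$, Cauchy--Schwarz and $\|B_N\|_2\ll N^{d/2}$ reduce everything to $N^{d-2h}\int_{\m_N(Q)}|S_{V,P,N}|^2\dd\alpha\to0$ (first $N\to\infty$, then $Q\to\infty$). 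Here $h=n-rd$ and $|V_N|\sim c_VN^h$ by Rome--Yamagishi. The parity-sensitive information is thus inherited from the integers in progressions; the only arithmetic input about $V_N$ is that the values $P(\mathbf z)$ are $\ell^2$-close to their major-arc model.

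That energy bound is where your hypotheses actually enter. By orthogonality it is dominated by $\int|E_{\mathbf Q,N}|$ off the full major arcs for the doubled system $(\mathbf F(\mathbf u),\mathbf F(\mathbf v),P(\mathbf u)-P(\mathbf v))$ of $2r+1$ equations. The column conditions on $A$ and $B$ let one split $(2r+1)K_*(d)$ indices into groups $I\sqcup J$ with $|I|=r+1$, $|J|=r$, each giving two disjoint invertible blocks of $2r+1$ columns. H\"older over these $2K_*(d)$ blocks then reduces matters to the one-variable minor-arc bound for $\int_{\m_N(Q)}|W_{d,N}|^{2K_*(d)}$. So $n\ge(2r+1)K_*(d)$ and the condition on $B$ serve an $L^2$ minor-arc estimate for the doubled system, not congruence counting or Lang--Weil densities as in your outline.
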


The column hypotheses have a geometric formulation: the projective
diagonal varieties $V(\mathbf F)$ and $V(\mathbf F,P)$ are smooth
complete intersections over $\Q$ (equivalently, after base change to
$\C$) of codimensions $r$ and $r+1$, respectively. For a single diagonal equation, \eqref{diagvar:eq:variable-bound}
gives the following sufficient bounds:
\begin{center}
\begin{tabular}{c*{5}{r}}
\toprule
$d$&2&3&4&5&6\\
\midrule
$n\ge3K_*(d)$&9&12&24&36&51\\
\bottomrule
\end{tabular}
\end{center}

For a concrete  example, let $d=3$, $n=12$, and put
\[
 F(\mathbf z)=\sum_{j=1}^{12}(2j-13)z_j^3,
 \qquad P(\mathbf z)=\sum_{j=1}^{12}z_j^3,
\]
\[
V_N=\{\mathbf z\in[N]^{12}:F(\mathbf z)=0\}.
\]
Then $|V_N|\sim c_VN^9$ with $c_V>0$, and Theorem~\ref{diagvar:thm:main}  gives
\[
\lim_{N\to\infty} \frac1{|V_N|}\sum_{\mathbf z\in V_N}
 f\bigl(T^{\Omega(z_1^3+\cdots+z_{12}^3)}x\bigr)
 =\av f.
\]

\subsection{Main ideas in the proofs}

To prove the main theorems, we use the circle method. For a multivariate polynomial $P\in\Z[X_1,\dots,X_k]$,
define the associated exponential sum by
 \begin{equation*}
 S_{P,N}(\alpha)=\sum_{\mathbf n\in[N]^k}e(\alpha P(\mathbf n)),
 \qquad e(t)=\exp(2\pi it).
\end{equation*}

Fix an integer $C\ge1$ such that $|P(\mathbf n)|\le CN^{d}$ on
$[N]^k$, put $L_N=CN^{d}$. For any bounded sequence $b:\Z\to\C$, define
\begin{equation*}
 B_N(\alpha)=\sum_{|t|\le L_N}b(t)e(-t\alpha).
\end{equation*}
Let $\T=\R/\Z$. All integrals over $\T$ are taken with respect
to normalized Lebesgue measure. Then
\begin{equation*}
\sum_{\mathbf n\in[N]^k}b(P(\mathbf n)) = \int_{\mathbb T}S_{P,N}(\alpha)B_N(\alpha)\,d\alpha
\end{equation*}
For example,  for any fixed $x\in X$ and $f\in C(X)$, if we take 
\begin{equation*}
 b(n)=b_x(n)=f(T^{\Omega(n)}x)-\av f\qquad(n\in\Z),
\end{equation*}
then to prove \eqref{eq:generalmain}, it suffices to prove that
\begin{equation*}
 \lim_{N\to\infty}\frac1{N^k}\sum_{\mathbf n\in[N]^k}b(P(\mathbf n))=0,
\end{equation*}
which is equivalent to proving
\begin{equation*}
 \lim_{N\to\infty}N^{-k}\int_{\mathbb T}S_{P,N}(\alpha)B_N(\alpha)\,d\alpha=0.
\end{equation*}

If $b$ has vanishing rational means, then its cancellation controls the major-arc contribution. For the minor arcs, using the Cauchy-Schwarz inequality yields an $L^2$ integral of $S_{P,N}(\alpha)$. Using the identity
\[
 \int_{\T}|S_{P,N}(\alpha)|^2\dd\alpha=\#\{(\mathbf u,\mathbf v)\in[N]^{2k}:
                      P(\mathbf u)=P(\mathbf v)\},
\]
Birch's theorem gives an asymptotic formula for this integral. This will imply that the contribution from the minor arcs is negligible. 

For $\boldsymbol\varepsilon=(\varepsilon_1,\ldots,\varepsilon_k)
\in\{1,-1\}^k$,  we have 
\[
 |S_{P_{\boldsymbol\varepsilon},N}|^2=|W_{{d},N}|^{2k},
\quad \text{where}\quad
 W_{d,N}(\alpha)=\sum_{n=1}^N e(\alpha n^d).
\]
Therefore, we can use the classical estimate on the moments of the Weyl sum to control the square integral of $S_{P_{\boldsymbol\varepsilon},N}$.

In \cite{QZ}, Qi and Zheng use  the exponential sum along $\Omega(|P(\mathbf n)|)$ to give a criterion that implies \eqref{eq:generalmain}. In this article, we use the exponential sum along $P(\mathbf n)$. The arguments based on Proposition~\ref{prop:transfer}, including their constrained analogues, also apply to arithmetic functions $a:\N\to\N_0$, where $\N_0=\{0,1,2,\ldots\}$, satisfying
\begin{equation*}
 \lim_{N\to\infty}\frac1N\sum_{n=1}^N f(T^{a(qn+r)}x)=\av f
\end{equation*}
for every $q\ge1$, $r\in\{0,1,\ldots,q-1\}$, $x\in X$, and $f\in C(X)$, see Lemma~\ref{lem_ap}. For integer polynomial values, extend $a$ by $a(-n)=a(n)$ and choose any $a(0)\in\N_0$. This gives the analogous conclusions of Theorems~\ref{thm:Birch}, \ref{var:thm:main} and \ref{diagvar:thm:main}, and of Theorem~\ref{thm:diagonal} except for the separately treated case $d=k=2$. The progression hypothesis alone does not imply those two-variable quadratic cases.

For example, for every integer base $b\ge2$, \eqref{eq:generalmain}
still holds if $\Omega(P(\mathbf n))$ is replaced by
$s_b(\Omega(P(\mathbf n)))$, where $s_b(j)$ denotes the sum of the
base-$b$ digits of $j\ge0$, with $s_b(0)=0$.
The required progression averages are proved in
Corollary~\ref{cor:digit-omega-ap} below. In \cite[Theorem 1.9]{WangYi2026}, Wang and Yi proved the case $q=1$.

In Section~\ref{sec:transfer}, using the ideas in this subsection,  we will give a new  criterion that implies \eqref{eq:generalmain}. Then we will apply it to prove the main theorems one by one in the remaining sections.

\section{Circle method}\label{sec:transfer}

Let  $(X,\nu,T)$ be a uniquely ergodic system. Let $e(u)=\exp(2\pi i u)$ for any $u\in\R$. Let $\T=\R/\Z$, and all integrals over $\T$ use the normalized Lebesgue measure.

\begin{definition}\label{def:null}
A bounded sequence $b:\Z\to\C$ has \emph{vanishing rational means}
if, for every $\theta\in\Q$ and $\sigma\in\{1,-1\}$,
\begin{equation}\label{eq:null}
\lim_{N\to\infty}\frac1N \sum_{n=1}^N b(\sigma n)e(n\theta)=0.
\end{equation}
\end{definition}

\begin{lemma}\label{lem_ap}
Let $b:\Z\to\C$ be bounded. If 
\begin{equation}\label{lem_ap_eq}
 \lim_{N\to\infty}\frac1N\sum_{n=1}^{N}b(qn+a)=0
\end{equation}
 for any nonzero integer $q$ and  every $a\in\{0,\ldots,|q|-1\}$, then $b(t)$ has vanishing rational means.
\end{lemma}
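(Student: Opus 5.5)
Write $\theta=a_0/q$ with $q\ge1$, and fix $\sigma\in\{1,-1\}$. The plan is to split the sum in \eqref{eq:null} according to the residue of $n$ modulo $q$. On each residue class the phase $e(n\theta)$ is constant, so it factors out, and what remains is an average of $b$ along an arithmetic progression. The hypothesis \eqref{lem_ap_eq} then kills each piece. The only thing to watch is that $b(\sigma n)$ with $\sigma=-1$ corresponds to a progression with negative modulus, which is exactly why the lemma allows nonzero $q$ of either sign.

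Concretely, write $n=qm+c$ with $c\in\{1,\dots,q\}$ and $m\ge0$. Then $e(n\theta)=e(ca_0/q)$ depends only on $c$, and
\[
\frac1N\sum_{n=1}^N b(\sigma n)e(n\theta)=\sum_{c=1}^{q}e(ca_0/q)\,\frac1N\sum_{\substack{0\le m\\ qm+c\le N}} b(\sigma q m+\sigma c).
\]
The inner sum has $M_c=N/q+O(1)$ terms. It therefore suffices to show that
\[
\frac1M\sum_{m=0}^{M-1}b(\sigma q m+\sigma c)\to0 \quad\text{as } M\to\infty
\]
for each fixed $c$. Since $q$ is fixed and $b$ is bounded, the $O(1)$ discrepancy in the number of terms is harmless.

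Now put $Q=\sigma q$, a nonzero integer with $|Q|=q$. The terms are $b(Qm+\sigma c)$. Reduce $\sigma c$ to $a\in\{0,\dots,q-1\}$ with $\sigma c=a+Qj$ for some fixed integer $j$. Then $b(Qm+\sigma c)=b(Q(m+j)+a)$. Shifting the summation index by the fixed amount $j$ changes the average by at most $O(|j|\,\|b\|_\infty/M)$. This leaves $\frac1M\sum_{m=1}^{M}b(Qm+a)$ up to $o(1)$, which tends to $0$ by \eqref{lem_ap_eq} applied with modulus $Q$ and residue $a$. Summing over the finitely many $c$ gives \eqref{eq:null}.

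There is no real obstacle here; the step that needs care is the sign bookkeeping for $\sigma=-1$. One has to make sure the hypothesis is invoked with negative modulus $Q=-q$ and a residue in $\{0,\dots,|Q|-1\}$ (obtained by the index shift), rather than trying to relate $b(-n)$ to $b(n)$. The hypothesis gives no such relation in general.
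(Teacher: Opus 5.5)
Your proof is correct and follows the paper's argument. Like the paper, you split the sum by residue class modulo $q$, pull out the constant phase $e(ca_0/q)$, and apply the progression hypothesis to each of the finitely many classes; your explicit treatment of $\sigma=-1$, using the negative modulus $Q=-q$ and a bounded index shift, fills in the case the paper dismisses as ``similar.''
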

\begin{proof}
It suffices to show that
\begin{equation}\label{bx_null}
\lim_{N\to\infty}\frac1N \sum_{n=1}^N b( n)e(n\theta)=0
\end{equation}
for every  $\theta\in\mathbb Q$. The case $b(-n)$ is similar.
  
Write $\theta=r/q$, where $r\in\mathbb Z$ and
$q\ge1$. For each residue class $a\bmod q$, define
\[
 A_a(N)=
 \sum_{\substack{1\le t\le N\\t\equiv a\pmod q}}b(t).
\]
Then by \eqref{lem_ap_eq}, we have
\[
 \lim_{N\to\infty} \frac{A_a(N)}N =0.
\]

If $t\equiv a\pmod q$, then
$e(rt/q)=e(ra/q)$. Splitting the sum into residue classes 
gives
\[
 \sum_{t=1}^{N}b(t)e(rt/q)
 =
 \sum_{a=0}^{q-1}e(ra/q)A_a(N).
\]
Since $q$ is fixed, it follows that
\[
 \left|
 \frac1N\sum_{t=1}^{N}b(t)e(rt/q)
 \right|
 \le
 \sum_{a=0}^{q-1}\frac{|A_a(N)|}{N}
 \longrightarrow0.
\]
This proves \eqref{bx_null}.
\end{proof}

By \cite[Corollary 1.16]{BR}, we get the following corollary.
\begin{corollary}\label{lem:arithmetic}
For a fixed $x\in X$ and $f\in C(X)$,
the sequence
\begin{equation}\label{eq:bx}
 b_x(n)=f(T^{\Omega(n)}x)-\av f\qquad(n\in\Z)
\end{equation}
has vanishing rational means.
\end{corollary}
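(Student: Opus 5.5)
The plan is to verify the hypothesis of Lemma~\ref{lem_ap} for $b=b_x$, and then read off vanishing rational means from that lemma. Fix $x\in X$ and $f\in C(X)$. We must show that for every nonzero integer $q$ and every $a\in\{0,\ldots,|q|-1\}$,
\[
 \lim_{N\to\infty}\frac1N\sum_{n=1}^N b_x(qn+a)=0,
\]
that is,
\[
 \frac1N\sum_{n=1}^N f\bigl(T^{\Omega(qn+a)}x\bigr)\to\av f .
\]

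\emph{Case $q\ge1$.} This is exactly the content of \cite[Corollary 1.16]{BR}, the arithmetic-progression version of \eqref{eq:BRintro}. It holds for every $x\in X$, every $f\in C(X)$ and every residue $a\in\{0,\ldots,q-1\}$. When $a=0$ the argument $qn$ is still a positive integer, so no convention issue arises.

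\emph{Case $q<0$.} Write $q=-|q|$. For $n$ large, $qn+a=-(|q|n-a)$ is a negative integer. By the convention $\Omega(t)=\Omega(|t|)$ we get
\[
 \Omega(qn+a)=\Omega(|q|n-a).
\]
The numbers $|q|n-a$ run over the progression $|q|m+a'$ with
\[
 a'\equiv -a \pmod{|q|},\qquad a'\in\{0,\ldots,|q|-1\},
\]
after an index shift of at most one step. Finitely many terms (including the possible value $t=0$, where $\Omega(0)=0$) contribute $O(\|f\|_\infty/N)$. So the average reduces to the $q\ge1$ case, and the limit is again $\av f$.

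Having verified the hypothesis of Lemma~\ref{lem_ap}, that lemma gives that $b_x$ has vanishing rational means for both signs $\sigma$. The only point needing care is the bookkeeping for negative moduli and the index shift. This is routine, since the boundary terms are bounded by $2\|f\|_\infty$ and there are $O(1)$ of them.
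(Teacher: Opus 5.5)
Your proposal is correct and follows the paper's own one-line argument: the arithmetic-progression version of the Bergelson--Richter theorem, \cite[Corollary 1.16]{BR}, verifies the hypothesis of Lemma~\ref{lem_ap}, and your reduction of negative moduli via $\Omega(t)=\Omega(|t|)$ fills in a step the paper leaves implicit. One small point: for $q<0$ and $n\ge1$ you have $qn+a\le-1$, so the value $t=0$ never arises, and since $b_x$ is even the case $\sigma=-1$ is immediate anyway.
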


By the Schwartz--Zippel lemma
\cite[Corollary 1]{Schwartz}, we have the following bound on the number of zeros of multivariable polynomials.

\begin{lemma}\label{lem:zeros}
If $P\in\Z[X_1,\ldots,X_k]$ is nonzero, then
\[
 \#\{\mathbf n\in[N]^k:P(\mathbf n)=0\}=O_P(N^{k-1}).
\]
\end{lemma}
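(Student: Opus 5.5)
The bound in Lemma~\ref{lem:zeros} is the standard Schwartz--Zippel estimate, and I would prove it by induction on the number of variables $k$. The key fact is that a nonzero polynomial of degree $D$ in one variable has at most $D$ roots. The constant will in fact depend only on the total degree $D=\deg P$: the aim is the bound $\#\{\mathbf n\in[N]^k:P(\mathbf n)=0\}\le D N^{k-1}$.

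For $k=1$, the nonzero polynomial $P$ has at most $D$ integer roots, so the count is at most $D=DN^{0}$. For the inductive step with $k\ge2$, I would expand $P$ in powers of the last variable:
\[
P(X_1,\ldots,X_k)=\sum_{j=0}^{e}c_j(X_1,\ldots,X_{k-1})X_k^{j},
\]
where $c_e\neq0$ and $\deg c_e\le D-e$. I would then split the zeros $\mathbf n=(\mathbf n',n_k)\in[N]^k$ into two classes according to whether $c_e(\mathbf n')=0$.
\begin{itemize}
\item If $c_e(\mathbf n')=0$, then by the induction hypothesis there are at most $(D-e)N^{k-2}$ choices of $\mathbf n'$, and at most $N$ choices of $n_k$. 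This class therefore contributes at most $(D-e)N^{k-1}$ zeros.
\item If $c_e(\mathbf n')\neq0$, then $X_k\mapsto P(\mathbf n',X_k)$ is a nonzero polynomial of degree $e$, so it has at most $e$ roots. With at most $N^{k-1}$ choices of $\mathbf n'$, this class contributes at most $eN^{k-1}$ zeros.
\end{itemize}
Adding the two contributions gives at most $DN^{k-1}$, which completes the induction.

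If $e=0$, the same argument applies. In that case $P=c_0$ does not depend on $X_k$, the second class is empty, and the first class gives at most $D N^{k-2}\cdot N$ zeros.

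The only point that needs care is the bookkeeping of degrees. The induction hypothesis must be applied to $c_e$ with its own degree $D-e$, rather than with $D$. That is what makes the two contributions sum to exactly $D$ instead of something larger. Alternatively, one could cite \cite[Corollary 1]{Schwartz} directly with the finite set $S=[N]$. That result gives the probability bound $\le D/N$, which translates into the count $\le DN^{k-1}=O_P(N^{k-1})$.
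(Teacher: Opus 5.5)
Your proof is correct: inducting on $k$, splitting off the leading coefficient $c_e$ in $X_k$, and applying the hypothesis to $c_e$ at its own degree $\le D-e$ gives the uniform bound $\deg(P)\,N^{k-1}$. The paper just cites the Schwartz--Zippel bound \cite[Corollary 1]{Schwartz}, and your induction is the standard proof of that result, so your approach and the paper's are essentially the same.
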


Define $\| t \| : = \min\{|n-t|: n\in\Z\}$. For a fixed degree ${d}\ge2$ and integer $Q\ge1$, define
\begin{align}
 \M_N(Q)&=\bigcup_{\substack{1\le q\le Q\\0\le a<q,\ (a,q)=1}}
 \left\{\alpha\in\T:\left\|\alpha-\frac aq\right\|\le QN^{-{d}}\right\},
 \label{eq:fixedarcs}\\
 \m_N(Q)&=\T\setminus\M_N(Q).\notag
\end{align}
We have
\begin{equation}\label{eq:measure}
 |\M_N(Q)|\le 2Q^3N^{-{d}}.
\end{equation}

\begin{proposition}\label{prop:transfer}
Let $P\in\Z[X_1,\ldots,X_k]$ have degree ${d}\ge2$. Let
\begin{equation}\label{eq:SPintro}
 S_{P,N}(\alpha)=\sum_{\mathbf n\in[N]^k}e(\alpha P(\mathbf n)).
 \end{equation}
Suppose that
\begin{equation}\label{eq:energy}
 \lim_{Q\to\infty}\limsup_{N\to\infty}
 N^{{d}-2k}\int_{\m_N(Q)}|S_{P,N}(\alpha)|^2\dd\alpha=0.
\end{equation}
If $b$ is bounded and has vanishing rational means, then
\begin{equation}\label{eq:transferresult}
 \lim_{N\to\infty}\frac1{N^k}\sum_{\mathbf n\in[N]^k}b(P(\mathbf n))=0.
\end{equation}
Consequently, \eqref{eq:generalmain} holds for $P$.
\end{proposition}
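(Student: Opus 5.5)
We start from the orthogonality identity
\[
\sum_{\mathbf n\in[N]^k}b(P(\mathbf n))=\int_{\T}S_{P,N}(\alpha)B_N(\alpha)\dd\alpha,\qquad B_N(\alpha)=\sum_{|t|\le L_N}b(t)e(-t\alpha),
\]
and fix $Q$, splitting $\T=\M_N(Q)\cup\m_N(Q)$.

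\emph{Minor arcs.} We apply Cauchy--Schwarz and Parseval, $\int_{\T}|B_N|^2=\sum_{|t|\le L_N}|b(t)|^2\ll N^{d}$. This gives
\[
\Bigl|\int_{\m_N(Q)}S_{P,N}B_N\Bigr|\le\Bigl(\int_{\m_N(Q)}|S_{P,N}|^2\Bigr)^{1/2}O(N^{d/2}).
\]
Hence
\[
N^{-k}\Bigl|\int_{\m_N(Q)}S_{P,N}B_N\Bigr|\ll\Bigl(N^{d-2k}\int_{\m_N(Q)}|S_{P,N}|^2\Bigr)^{1/2}.
\]
By hypothesis \eqref{eq:energy}, the $\limsup_N$ of this quantity tends to $0$ as $Q\to\infty$.

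\emph{Major arcs.} We write $\alpha=a/q+\beta$ with $|\beta|\le QN^{-d}$. The plan is to show that $\sup_{\alpha\in\M_N(Q)}|B_N(\alpha)|=o_Q(N^{d})$. Combined with the trivial bound $|S_{P,N}|\le N^k$ and \eqref{eq:measure}, this gives a major-arc contribution of at most
\[
N^{-k}\cdot N^k\cdot 2Q^3N^{-d}\cdot o(N^d)=o_Q(1).
\]
To bound $B_N(a/q+\beta)$, we split it into the ranges $t>0$ and $t<0$; the term $t=0$ is $O(1)$. Set
\[
A_\sigma(u)=\sum_{1\le n\le u}b(\sigma n)e(-\sigma na/q).
\]
By the vanishing rational means hypothesis with $\theta=\mp a/q$, we have $A_\sigma(u)=o(u)$. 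The $\sigma$-part of $B_N$ is then
\[
\sum_{1\le n\le L_N}b(\sigma n)e(-\sigma na/q)\,e(-\sigma n\beta).
\]
Partial summation against the smooth weight $e(-\sigma n\beta)$ bounds it by
\[
|A_\sigma(L_N)|+\int_1^{L_N}|A_\sigma(u)|\,2\pi|\beta|\dd u\le o(L_N)\bigl(1+2\pi L_N|\beta|\bigr).
\]
Since $L_N|\beta|\le CQ$, this is $o_Q(N^d)$. Only finitely many fractions $a/q$ with $q\le Q$ occur, so the $o(\cdot)$ is uniform over $\M_N(Q)$.

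\emph{Conclusion.} Combining the two ranges gives
\[
\limsup_N N^{-k}\Bigl|\sum_{\mathbf n\in[N]^k}b(P(\mathbf n))\Bigr|\le\varepsilon(Q)
\]
for every $Q$, with $\varepsilon(Q)\to0$ as $Q\to\infty$; this proves \eqref{eq:transferresult}.

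For the consequence, we take $b=b_x$ from Corollary~\ref{lem:arithmetic}, which is bounded and has vanishing rational means. Then
\[
N^{-k}\sum_{\mathbf n\in[N]^k}f(T^{\Omega(P(\mathbf n))}x)-\av f\to0.
\]
Here the convention $\Omega(t)=\Omega(|t|)$ covers negative values of $P$. The zeros of $P$ are consistently encoded in $b(0)$ and in any case number only $O(N^{k-1})$ by Lemma~\ref{lem:zeros}.

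The main obstacle is the uniformity in $\beta$ on the major arcs. The arc width $QN^{-d}$ is exactly what keeps $L_N|\beta|$ bounded in partial summation, so the choice of arc width in \eqref{eq:fixedarcs} is essential.
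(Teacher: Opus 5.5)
Your proposal is correct and follows the paper's proof step by step. It uses the Fourier identity with $B_N$, then Cauchy--Schwarz plus Parseval on $\m_N(Q)$. On $\M_N(Q)$ it pairs the trivial bound $|S_{P,N}|\le N^k$ with $\sup_{\M_N(Q)}|B_N|=o_Q(N^d)$, which comes from vanishing rational means and partial summation over the finitely many $a/q$, and it lets $N\to\infty$ before $Q\to\infty$. Your explicit partial-summation bound, using $\max_{u\le L_N}|A_\sigma(u)|=o(L_N)$ and $L_N|\beta|\le CQ$, fills in the step the paper compresses into \eqref{eq:maximalpartial}--\eqref{eq:Bmajor}.
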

\begin{proof}
Fix an integer $C\ge1$ such that $|P(\mathbf n)|\le CN^{d}$ on
$[N]^k$, put $L_N=CN^{d}$, and define
\begin{equation}\label{eq:BN}
 B_N(\alpha)=\sum_{|t|\le L_N}b(t)e(-t\alpha).
\end{equation}
Then for $|t|\le L_N$ we have 
\[
b(t)=\int_{\mathbb T}B_N(\alpha)e(t\alpha)\,d\alpha.
\]
By Parseval's formula, we have
\begin{align}
\int_{\mathbb T}|B_N(\alpha)|^2\,d\alpha =\sum_{|t|\le L_N}|b(t)|^2
 \ll_C\norm b_\infty^2N^{d}.\label{eq:Parseval}
\end{align}
Moreover, 
\begin{align}
\int_{\mathbb T}S_{P,N}(\alpha)B_N(\alpha)\,d\alpha
&=
\sum_{\mathbf n\in[N]^k}
\sum_{|t|\le L_N}
b(t)\int_{\mathbb T}
e\bigl(\alpha(P(\mathbf n)-t)\bigr)\,d\alpha \nonumber\\
&=
\sum_{\mathbf n\in[N]^k}
\sum_{|t|\le L_N}
b(t)\,\mathbf1_{\{t=P(\mathbf n)\}} \nonumber\\
&=
\sum_{\mathbf n\in[N]^k}b(P(\mathbf n)).  \label{eq:Fourier}
\end{align}

For every fixed rational $a/q$ and either sign, \eqref{eq:null} implies
\begin{equation}\label{eq:maximalpartial}
 \max_{M\le L_N}\left|\sum_{t=1}^M b(\pm t)e(\mp at/q)\right|
 =o(N^{d}).
\end{equation}
Then partial summation shows that
\[
 \sup_{|\beta|\le QN^{-{d}}}
 |B_N(a/q+\beta)|=o_Q(N^{d}).
\]
There are only finitely many rationals in \eqref{eq:fixedarcs}. Hence
\begin{equation}\label{eq:Bmajor}
 \sup_{\alpha\in\M_N(Q)}|B_N(\alpha)|=o_Q(N^{d}).
\end{equation}

We now split \eqref{eq:Fourier} over $\M_N(Q)$ and $\m_N(Q)$.
The elementary bound $|S_{P,N}(\alpha)|\le N^k$ and
\eqref{eq:measure} show that
\begin{equation}\label{eq:majortransfer}
 N^{-k}\left|\int_{\M_N(Q)}S_{P,N}B_N\dd\alpha\right|
 \le |\M_N(Q)|\sup_{\M_N(Q)}|B_N|
 \le 2Q^3N^{-{d}}\cdot o_Q(N^{d})=o_Q(1).
\end{equation}

On the minor arcs, Cauchy--Schwarz gives
\[
 N^{-k}\left|\int_{\m_N(Q)}S_{P,N}B_N\dd\alpha\right|
 \le N^{-k}
 \left(\int_{\m_N(Q)}|S_{P,N}|^2\dd\alpha\right)^{1/2}
 \|B_N\|_2.
\]
Substituting \eqref{eq:Parseval} 
yields
\begin{equation}\label{eq:minortransfer}
 N^{-k}\left|\int_{\m_N(Q)}S_{P,N}B_N \dd\alpha\right|
 \ll_C\norm b_\infty
 \left(N^{{d}-2k}\int_{\m_N(Q)}|S_{P,N}|^2\dd\alpha\right)^{1/2}.
\end{equation}

For clarity, put
\[
 E(Q)=\limsup_{N\to\infty}
 N^{{d}-2k}\int_{\m_N(Q)}|S_{P,N}(\alpha)|^2\dd\alpha.
\]
Taking the upper limit in $N$ with $Q$ fixed in \eqref{eq:majortransfer} and \eqref{eq:minortransfer} gives
\[
 \limsup_{N\to\infty}
 \left|N^{-k}\sum_{\mathbf n\in[N]^k}b(P(\mathbf n))\right|
 \ll_C\|b\|_\infty E(Q)^{1/2}.
\]
Hypothesis \eqref{eq:energy} states that $E(Q)\to0$ as
$Q\to\infty$. This proves \eqref{eq:transferresult} with the
required order of limits.

For the dynamical conclusion take $b=b_x$ from
Corollary~\ref{lem:arithmetic}. Then $b_x$ has vanishing rational means.  Its normalized polynomial average is
exactly the left side of \eqref{eq:generalmain} minus $\int_Xf\dd\nu$. Lemma~\ref{lem:zeros} shows that the
contribution of the value $b_x(0)$ in the  average of  \eqref{eq:generalmain}  is $O_P(N^{-1})$. 
Thus,   \eqref{eq:generalmain}   follows by \eqref{eq:transferresult}.
\end{proof}

\section{Proof of Theorem~\ref{thm:Birch}}\label{sec:Birch}

The following is the single-equation case of
\cite[Theorem 1.2]{BHB}, together with the discussion immediately
following that theorem concerning nonhomogeneous polynomials.
The Birch rank condition is imposed on the highest-degree homogeneous
part. In the homogeneous case, this is the classical theorem
of Birch \cite{Birch}; see also \cite[Theorem 1.1]{Schindler} with one equation.

\begin{proposition}\label{input:Birch}
Let $F\in\Z[X_1,\ldots,X_k]$ have degree ${d}\ge2$. If
\begin{equation}\label{eq:Birchclassicalrank}
\B(F)>({d}-1)2^{d},
\end{equation}
then there exists some $\delta>0$ such that
\begin{equation}\label{eq:Birchcount}
 \#\{\mathbf n\in[N]^k:F(\mathbf n)=0\}
 =\SingSeries_F\JJ_{F_{d}}N^{k-{d}}+O_F(N^{k-{d}-\delta}).
\end{equation}
Here the singular series and singular integral are absolutely convergent,
and are given by
\begin{align}
 \SingSeries_F&=\sum_{q=1}^{\infty}\ 
 \sum_{\substack{0\le a<q\\(a,q)=1}}
 q^{-k}\sum_{\mathbf n\bmod q}e(aF(\mathbf n)/q),
 \label{eq:SSgeneral}\\
 \JJ_{F_{d}}&=\int_{\R}\int_{[0,1]^k}
 e(\gamma F_{d}(\mathbf u))\dd\mathbf u\dd\gamma.
 \label{eq:JJgeneral}
\end{align}
\end{proposition}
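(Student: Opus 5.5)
This proposition is Birch's theorem. I would prove it by the Hardy--Littlewood circle method, following Birch's original argument as adapted to nonhomogeneous polynomials in \cite{BHB}. Write
\[
\#\{\mathbf n\in[N]^k:F(\mathbf n)=0\}=\int_{\T}S_{F,N}(\alpha)\dd\alpha ,
\]
with $S_{F,N}$ as in \eqref{eq:SPintro}. Fix a small parameter $\theta>0$. Define major arcs $\M$ as the union, over $q\le N^{(d-1)\theta}$ and $(a,q)=1$, of the intervals $\|\alpha-a/q\|\le q^{-1}N^{-d+(d-1)\theta}$. The minor arcs $\m$ are the complement.

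\textbf{Weyl differencing.} The key input is Birch's dichotomy. Differencing $d-1$ times reduces $|S_{F,N}(\alpha)|^{2^{d-1}}$ to counting tuples $(\mathbf h_1,\ldots,\mathbf h_{d-1})$ for which the multilinear form attached to $F_d$ has small fractional parts. The lower-degree terms of $F$ disappear after $d-1$ differencings, up to linear terms in the phase that are harmless. Then a geometry-of-numbers shrinking lemma gives the following: either $|S_{F,N}(\alpha)|\ll N^{k-K\theta+\varepsilon}$ with $K=\B(F_d)/((d-1)2^{d-1})$, or $\alpha$ has a rational approximation $a/q$ with $q\le N^{(d-1)\theta}$ and $|q\alpha-a|\le N^{-d+(d-1)\theta}$. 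The second alternative uses the upper bound on the dimension of the affine variety $\{\mathbf h:\ \text{the multilinear form vanishes identically}\}$, which is controlled by $\dim\Crit(F_d)$. This is where the Birch rank enters.

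\textbf{Minor arcs.} A dyadic decomposition of $\m$ by the quality of rational approximation, together with the dichotomy, gives
\[
\int_{\m}|S_{F,N}|\dd\alpha\ll N^{k-d-\delta},
\]
provided $K>2$. This is exactly the condition $\B(F)>(d-1)2^{d}$.

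\textbf{Major arcs.} On an arc around $a/q$, write $\alpha=a/q+\beta$ and split $\mathbf n$ into residue classes mod $q$. Approximating the sum over each class by an integral yields
\[
S_{F,N}(\alpha)=q^{-k}S(a,q)\,N^kI(N^d\beta)+O\bigl(qN^{k-1}(1+N^d|\beta|)\bigr).
\]
Here $S(a,q)$ is the complete exponential sum appearing in \eqref{eq:SSgeneral}, and $I$ is the singular integral kernel of $F_d$. The lower-order terms of $F$ affect only the error, because $|\beta|N^{d-1}$ is small on $\M$. Integrating over $\M$ gives truncated versions of $\SingSeries_F$ and $\JJ_{F_d}$. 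To complete them, I need the tail bounds $q^{-k}|S(a,q)|\ll q^{-K+\varepsilon}$ and $|I(\gamma)|\ll\min(1,|\gamma|^{-K})$. Both follow from the same Weyl dichotomy, by taking $N=q$ for $S(a,q)$ and a limiting argument for $I$. Since $K>2$, the singular series and integral converge absolutely and the truncation errors are $O(N^{k-d-\delta})$.

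\textbf{Main obstacle.} I expect the main obstacle to be the Weyl-differencing/geometry-of-numbers dichotomy, namely its rank-dependent exponent and its uniformity in the nonhomogeneous setting. I would take this step essentially verbatim from \cite{Birch}, with the lower-order modifications from \cite{BHB} or \cite{Schindler}, rather than reprove it. Since the paper states this proposition as a citation, I would in practice cite these results.
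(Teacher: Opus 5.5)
Your route is the paper's route. The paper gives no argument of its own for this proposition: it cites \cite[Theorem~1.2]{BHB}, together with the remark following it for nonhomogeneous $F$, and \cite{Birch}, \cite{Schindler} in the homogeneous case. Your sketch outlines exactly that circle-method proof and defers the Weyl-differencing step to the same sources.

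There is one bookkeeping slip to fix in your outline. You set $K=\B(F_d)/((d-1)2^{d-1})$ and take major arcs with $q\le N^{(d-1)\theta}$, whose total measure is $\asymp N^{-d+2(d-1)\theta}$. With only the saving $N^{-K\theta}$ off these arcs, the $\theta$-decomposition of the minor arcs needs $K>2(d-1)$, not $K>2$. Birch's lemma actually gives the saving
\[
N^{-\B(F_d)\theta/2^{d-1}+\varepsilon}=N^{-(d-1)K\theta+\varepsilon}
\]
with those arcs. Equivalently, you can keep the saving $N^{-K\theta}$ but use arcs $q\le N^{\theta}$, $|q\alpha-a|\le N^{-d+\theta}$. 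With either normalization, $K>2$, i.e.\ $\B(F)>(d-1)2^d$, is exactly the threshold. This matches the exponent $-K$ in your singular-series and singular-integral bounds, except that the integral bound should read $|\gamma|^{-K+\varepsilon}$.
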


Let $P\in\Z[X_1,\ldots,X_k]$ be a polynomial of degree
${d}\ge2$
satisfying  \eqref{eq:rankcondition}. That is,  $\B(P)>({d}-1)2^{{d}-1}.$
Set $H=P_{d}$. Consider
\begin{equation}\label{eq:difference}
 F(\mathbf u,\mathbf v)=P(\mathbf u)-P(\mathbf v),
 \qquad F_{d}(\mathbf u,\mathbf v)=H(\mathbf u)-H(\mathbf v).
\end{equation}
Since the variables are disjoint,
\begin{equation}\label{eq:rankdoubling}
 \Crit(F_{d})=\Crit(H)\times\Crit(H),
 \qquad \B(F_{d})=2\B(H)=2\B(P)>({d}-1)2^{d}.
\end{equation}
Thus Proposition~\ref{input:Birch} applies. Write
\begin{equation}\label{eq:localP}
 C_P(q,a)=\sum_{\mathbf n\bmod q}e(aP(\mathbf n)/q),
 \qquad V_H(\gamma)=\int_{[0,1]^k}e(\gamma H(\mathbf u))\dd\mathbf u.
\end{equation}
Then by the definition of the difference polynomials in \eqref{eq:difference}, we have
\begin{equation}\label{eq:positivefactors}
 C_F(q,a)=|C_P(q,a)|^2,
 \qquad V_{F_{d}}(\gamma)=|V_H(\gamma)|^2.
\end{equation}
In particular,
\begin{equation}\label{eq:positiveconstants}
 \SingSeries_P=\sum_{q\ge1}\sum_{\substack{0\le a<q\\(a,q)=1}}
 q^{-2k}|C_P(q,a)|^2<\infty,
 \qquad \JJ_H=\int_{\R}|V_H(\gamma)|^2\dd\gamma<\infty.
\end{equation}

\begin{proposition}
\label{prop:Birchenergy}
Under \eqref{eq:rankcondition}, hypothesis
\eqref{eq:energy} holds.
\end{proposition}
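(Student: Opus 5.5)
Write $I(N)=\int_{\T}|S_{P,N}|^2\dd\alpha$ and $I_{\M}(N,Q)=\int_{\M_N(Q)}|S_{P,N}|^2\dd\alpha$. The plan is to compute the full $L^2$ integral exactly via Birch, compute the major-arc part asymptotically, and then show that the two main terms agree as $Q\to\infty$; the minor-arc integral is the difference.

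\textbf{Full integral.} By orthogonality,
\[
 I(N)=\#\{(\mathbf u,\mathbf v)\in[N]^{2k}:F(\mathbf u,\mathbf v)=0\},
\]
with $F$ as in \eqref{eq:difference}. By \eqref{eq:rankdoubling}, Proposition~\ref{input:Birch} applies to $F$ in $2k$ variables, and \eqref{eq:positivefactors} gives
\[
 N^{d-2k}I(N)=\SingSeries_P\JJ_H+O(N^{-\delta}).
\]

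\textbf{Major arcs.} For $Q$ fixed, the arcs in \eqref{eq:fixedarcs} are disjoint once $N$ is large. On the arc around $a/q$, write $\alpha=a/q+\beta$ with $|\beta|\le QN^{-d}$. Splitting $\mathbf n$ into residue classes mod $q$ and applying Euler--Maclaurin (partial summation) in each class, I expect
\[
 S_{P,N}(a/q+\beta)=q^{-k}C_P(q,a)N^kV_H(\beta N^d)+O_Q(N^{k-1}).
\]
Only $H$ survives in the phase, because the lower-degree terms of $\beta P$ are $O(QN^{-1})$ on $[N]^k$. Squaring, integrating over $\beta$, and substituting $\gamma=\beta N^d$ then gives
\[
 \lim_{N\to\infty}N^{d-2k}I_{\M}(N,Q)
 =\sum_{q\le Q}\sum_{\substack{0\le a<q\\(a,q)=1}}q^{-2k}|C_P(q,a)|^2\int_{|\gamma|\le Q}|V_H(\gamma)|^2\dd\gamma.
\]
Every term here is nonnegative, and by \eqref{eq:positiveconstants} this expression increases to $\SingSeries_P\JJ_H$ as $Q\to\infty$.

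\textbf{Conclusion.} Subtracting,
\[
 \limsup_{N\to\infty}N^{d-2k}\int_{\m_N(Q)}|S_{P,N}|^2\dd\alpha
 =\SingSeries_P\JJ_H-\text{(truncated sum)},
\]
and the right side tends to $0$ as $Q\to\infty$. This is exactly \eqref{eq:energy}.

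The main obstacle is the major-arc approximation. The error terms must be uniform on the fixed arcs, and this is routine only because $Q$ is fixed while $N\to\infty$. The other point requiring care is the identification of Birch's constants for $F$ with $\SingSeries_P\JJ_H$, which \eqref{eq:positivefactors} provides. This argument needs no rate in $Q$, since positivity of the summands supplies the monotone convergence of the truncated sums.
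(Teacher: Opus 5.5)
Your proposal is correct and follows the paper's proof essentially step for step. Both arguments apply Birch to $P(\mathbf u)-P(\mathbf v)$ to get the full $L^2$ integral. Both then split into residue classes to obtain $N^{-k}S_{P,N}(a/q+\gamma N^{-d})=q^{-k}C_P(q,a)V_H(\gamma)+O_Q(N^{-1})$ on the fixed arcs, and finally subtract the truncated main term $\SingSeries_P(Q)\JJ_H(Q)$ before letting $Q\to\infty$. The paper does the sum-to-integral comparison with an explicit Lipschitz cell decomposition rather than Euler--Maclaurin, and it leaves implicit the monotone convergence that you justify by positivity of the summands.
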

\begin{proof}
Recall from \eqref{eq:SPintro} that
\[
S_{P,N}(\alpha)=\sum_{\mathbf n\in[N]^k}e(\alpha P(\mathbf n)).
\]
Expanding the following square, we have
\[
 \begin{aligned}
 \int_{\T}|S_{P,N}(\alpha)|^2\dd\alpha
 &=\sum_{\mathbf u,\mathbf v\in[N]^k}
   \int_{\T}e\bigl(\alpha(P(\mathbf u)-P(\mathbf v))\bigr)
        \dd\alpha\\
 &=\#\{(\mathbf u,\mathbf v)\in[N]^{2k}:
                      P(\mathbf u)=P(\mathbf v)\}.
 \end{aligned}
\]
Applying Proposition~\ref{input:Birch} to $F$ defined by \eqref{eq:difference}, we get that
\begin{equation}\label{eq:totalenergy}
 N^{{d}-2k}\int_{\T}|S_{P,N}(\alpha)|^2\dd\alpha
 =\SingSeries_P\JJ_H+O_P(N^{-\delta}).
\end{equation}

Next fix $Q\ge1$, integers $1\le q\le Q$ and $a$, and consider $|\gamma|\le Q$. Splitting
the sum into residue classes modulo $q$ gives
\begin{equation} \label{S_PN_decom}
 N^{-k}S_{P,N}(a/q+\gamma N^{-{d}}) =\sum_{\mathbf r\bmod q}e(aP(\mathbf r)/q)
    N^{-k}\sum_{\substack{\mathbf n\in[N]^k\\
                         \mathbf n\equiv\mathbf r\pmod q}}
      e\bigl(\gamma N^{-{d}}P(\mathbf n)\bigr).
\end{equation}

Write $P=H+P_{{d}-1}+\cdots+P_0$ as a sum of homogeneous parts.
Uniformly for $\mathbf u\in[0,1]^k$,
\[
 N^{-{d}}P(N\mathbf u)
 =H(\mathbf u)+\sum_{j=0}^{{d}-1}N^{j-{d}}P_j(\mathbf u)
 =H(\mathbf u)+O_P(N^{-1}).
\]
Since $|e(v)-e(w)|\le2\pi|v-w|$ for real $v,w$, we have
\[
e(\gamma N^{-{d}}P(\mathbf n)) = e(\gamma H(\mathbf n/N))+O_{P,Q}(N^{-1})
\]
uniformly for $|\gamma|\le Q$.

Write
\[
 f_\gamma(\mathbf u)=e(\gamma H(\mathbf u)),
 \qquad
 V_H(\gamma)=\int_{[0,1]^k}f_\gamma(\mathbf u)\,d\mathbf u,
 \qquad h=\frac qN.
\]
Then $|f_\gamma(\mathbf u)|=1$ and $0<h\le1$ for $N\ge Q$.

For any $\mathbf u,\mathbf v\in[0,1]^k$, we use $\|\mathbf u-\mathbf v\|_2$ to denote the Euclidean norm.
By
\[
 \frac{\partial f_\gamma}{\partial u_j}(\mathbf u)
 =
 2\pi i\gamma
 \frac{\partial H}{\partial u_j}(\mathbf u)
 e(\gamma H(\mathbf u)),
\]
we have
\[
 \sup_{\mathbf u\in[0,1]^k}
 \|\nabla f_\gamma(\mathbf u)\|_2
 \le
 2\pi Q
 \sup_{\mathbf u\in[0,1]^k}\|\nabla H(\mathbf u)\|_2
 =:L_{H,Q}
\]
 for all $|\gamma|\le Q$.
This gives the following uniform Lipschitz estimate
\[
 |f_\gamma(\mathbf u)-f_\gamma(\mathbf v)|
 \le L_{H,Q}\|\mathbf u-\mathbf v\|_2
 \qquad(\mathbf u,\mathbf v\in[0,1]^k).
\]

Choose the representatives $r_j\in\{1,\ldots,q\}$, so that $q$ represents the zero residue class. For each coordinate, the integers in $[N]$ satisfying $n_j\equiv r_j\pmod q$
are
\[
 r_j,\ r_j+q,\ \ldots,\ r_j+(m_j-1)q,
 \qquad
 m_j=\left\lfloor\frac{N-r_j}{q}\right\rfloor+1.
\]
Let
\[
 b_j=r_j+(m_j-1)q
\]
be the largest such integer. Then
\[
 N-q<b_j\le N.
\]
Set
\[
 \mathcal G=
 \{\mathbf n\in[N]^k:\mathbf n\equiv\mathbf r\pmod q\}.
\]
For each $\mathbf n\in\mathcal G$, define the truncated cell
\[
 C_{\mathbf n}
 =
 \prod_{j=1}^k
 \left(
 \max\left\{0,\frac{n_j-q}{N}\right\},
 \frac{n_j}{N}
 \right].
\]
These cells are pairwise disjoint and satisfy
\[
 \bigcup_{\mathbf n\in\mathcal G}C_{\mathbf n}
 =
 D_N:=\prod_{j=1}^k\left(0,\frac{b_j}{N}\right].
\]
Every side length is at most $h$, and Cauchy-Schwarz gives
\[
 \left\|\frac{\mathbf n}{N}-\mathbf u\right\|_2
 \le\sqrt{k}\,h
 \qquad(\mathbf u\in C_{\mathbf n}).
\]
The Lipschitz estimate yields
\[
 \left|
 |C_{\mathbf n}|f_\gamma(\mathbf n/N)
 -\int_{C_{\mathbf n}}f_\gamma(\mathbf u)\,d\mathbf u
 \right|\le
 \int_{C_{\mathbf n}}
 |f_\gamma(\mathbf n/N)-f_\gamma(\mathbf u)|\,d\mathbf u
 \le L_{H,Q}\sqrt{k}\,h\,|C_{\mathbf n}|.
\]
Summing over $\mathcal G$ and using $|D_N|\le1$, we obtain
\[
 \left|
 \sum_{\mathbf n\in\mathcal G}
 |C_{\mathbf n}|f_\gamma(\mathbf n/N)
 -\int_{D_N}f_\gamma(\mathbf u)\,d\mathbf u
 \right|
 \le L_{H,Q}\sqrt{k}\,h.
\]

It remains to control two boundary errors. First, a cell can have
volume smaller than $h^k$ only if $n_j=r_j$ for at least one
coordinate $j$. Since $m_i\le h^{-1}+1$, the number of such cells
is at most
\[
 \sum_{j=1}^k\prod_{i\ne j}m_i
 \le k(h^{-1}+1)^{k-1}.
\]
Each cell has volume deficit at most $h^k$. Thus
\[
 \begin{aligned}
 &\left|
 h^k\sum_{\mathbf n\in\mathcal G}f_\gamma(\mathbf n/N)
 -\sum_{\mathbf n\in\mathcal G}
 |C_{\mathbf n}|f_\gamma(\mathbf n/N)
 \right|\\
 &\qquad\le
 \sum_{\mathbf n\in\mathcal G}(h^k-|C_{\mathbf n}|)
 \le k(h^{-1}+1)^{k-1}h^k
 =kh(1+h)^{k-1}
 \le k2^{k-1}h.
 \end{aligned}
\]
Second, $0\le1-b_j/N<h$ for every $j$, so
\[
 |[0,1]^k\setminus D_N|
 =
 1-\prod_{j=1}^k\frac{b_j}{N}
 \le
 \sum_{j=1}^k\left(1-\frac{b_j}{N}\right)
 \le kh.
\]
Since $|f_\gamma|=1$, it follows that
\[
 \left|
 V_H(\gamma)
 -\int_{D_N}f_\gamma(\mathbf u)\,d\mathbf u
 \right|
 \le kh.
\]

Combining these estimates gives
\[
 \left|
 h^k\sum_{\mathbf n\in\mathcal G}f_\gamma(\mathbf n/N)
 -V_H(\gamma)
 \right|
 \le
 \bigl(\sqrt{k}L_{H,Q}+k2^{k-1}+k\bigr)h.
\]
Therefore
\[
 \left(\frac qN\right)^k
 \sum_{\substack{\mathbf n\in[N]^k\\
                  \mathbf n\equiv\mathbf r\pmod q}}
 e\bigl(\gamma H(\mathbf n/N)\bigr)
 =
 V_H(\gamma)+O_{H,Q,k}\left(\frac qN\right).
\]
Multiplying by $q^{-k}$, we conclude that
\[
 N^{-k}
 \sum_{\substack{\mathbf n\in[N]^k\\
                  \mathbf n\equiv\mathbf r\pmod q}}
 e\bigl(\gamma H(\mathbf n/N)\bigr)
 =
 q^{-k}V_H(\gamma)
 +O_{H,Q,k}\left(\frac{q^{1-k}}N\right).
\]
In particular, for fixed $q$, the error is
$O_{P,Q,q}(N^{-1})$. All estimates are uniform for $|\gamma|\le Q$
and for all residue classes $\mathbf r\bmod q$.

By \eqref{S_PN_decom}, summing the
last approximation with their rational phases,
\begin{align}
N^{-k}S_{P,N}(a/q+\gamma N^{-{d}})
&=
q^{-k}V_H(\gamma)
\sum_{\mathbf r\bmod q}e\bigl(aP(\mathbf r)/q\bigr)
+O_{P,Q,q}(N^{-1}) \nonumber\\
&=
q^{-k}C_P(q,a)V_H(\gamma)
+O_{P,Q,q}(N^{-1}). \label{eq:fixedmajorapprox}
\end{align}
holds uniformly for $|\gamma|\le Q$.

Fix now $Q$. Suppose that $2QN^{-{d}}<Q^{-2}$, then the arcs in $\M_N(Q)$ are disjoint. Changing
variables by $\alpha=a/q+\gamma N^{-{d}}$, we have
\[
 N^{{d}-2k}\int_{\M_N(Q)}|S_{P,N}(\alpha)|^2\dd\alpha=\sum_{q\le Q}\sum_{\substack{0\le a<q\\(a,q)=1}}
   \int_{-Q}^Q
   \left|N^{-k}S_{P,N}(a/q+\gamma N^{-{d}})\right|^2
      \dd\gamma.
\]
Since  \eqref{eq:fixedmajorapprox} holds uniformly for $|\gamma|\le Q$, taking $N\to\infty$ above gives
\begin{equation}\label{eq:truncatedenergy}
 \lim_{N\to\infty}N^{{d}-2k}
 \int_{\M_N(Q)}|S_{P,N}(\alpha)|^2\dd\alpha
 =\SingSeries_P(Q)\JJ_H(Q),
\end{equation}
where
\[
 \SingSeries_P(Q)=\sum_{q\le Q}\sum_{\substack{0\le a<q\\(a,q)=1}}
 q^{-2k}|C_P(q,a)|^2,
 \qquad \JJ_H(Q)=\int_{-Q}^Q|V_H(\gamma)|^2\dd\gamma.
\]
It follows by \eqref{eq:totalenergy} that
\begin{equation}\label{eq:taildifference}
 \lim_{N\to\infty}N^{{d}-2k}\int_{\m_N(Q)}|S_{P,N}(\alpha)|^2\dd\alpha
 =\SingSeries_P\JJ_H-\SingSeries_P(Q)\JJ_H(Q).
\end{equation}

Taking $Q\to\infty$ in \eqref{eq:taildifference} proves \eqref{eq:energy}.
\end{proof}

\begin{proof}[Proof of Theorem~\ref{thm:Birch}]
Combine Corollary~\ref{lem:arithmetic},
Proposition~\ref{prop:transfer}, and Proposition~\ref{prop:Birchenergy}.
\end{proof}

\section{Proof of Theorem~\ref{thm:diagonal} }\label{sec:Weyl}

For $d\ge3$ put
\begin{equation}\label{eq:Weyl}
 W_{d,N}(\alpha)=\sum_{n=1}^N e(\alpha n^d).
\end{equation}
Recall that
for $Q\ge1$, we have
\[
 \mathfrak M_N(Q)
 =
 \bigcup_{\substack{1\le q\le Q\\0\le a<q, (a,q)=1}}
 \left\{\alpha\in\mathbb T:
 \left\|\alpha-\frac aq\right\|_{\mathbb T}
 \le QN^{-d}\right\},
 \qquad
 \mathfrak m_N(Q)=\mathbb T\setminus\mathfrak M_N(Q).
\]

\begin{proposition}\label{input:endpoint}
Let $d\ge3$ and $p=2^d$. Then
\begin{equation}\label{eq:endpoint}
 \lim_{Q\to\infty}\limsup_{N\to\infty}
 N^{d-p}\int_{\m_N(Q)}|W_{d,N}(\alpha)|^p\dd\alpha=0.
\end{equation}
\end{proposition}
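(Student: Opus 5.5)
We prove \eqref{eq:endpoint} by the same mechanism as Proposition~\ref{prop:Birchenergy}: compute the full moment asymptotically, compute the major-arc moment asymptotically for fixed $Q$, and observe that the two limits agree as $Q\to\infty$. Write $p=2^d$ and $s=2^{d-1}$. The full moment is
\[
\int_{\T}|W_{d,N}|^{p}\dd\alpha=\#\{\mathbf u,\mathbf v\in[N]^{s}:u_1^d+\cdots+u_s^d=v_1^d+\cdots+v_s^d\},
\]
the number of solutions of a diagonal equation in $2s=2^d$ variables. For $d\ge3$ we have $2^d\ge 2^{d-1}\cdot 2\ge d(d+1)$ only for small $d$, so we will not rely on Vinogradov; instead we use the classical Hua-type endpoint. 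For $2^d$ variables, Hua's lemma gives the upper bound $N^{p-d+\varepsilon}$, which is not an asymptotic. The cleaner route is the known Vaughan asymptotic for $2^d$ variables (Vaughan's result that the circle method works with $s=2^d$ for the diagonal problem). It gives
\[
N^{d-p}\int_{\T}|W_{d,N}|^p\dd\alpha\to\SingSeries\,\JJ,
\qquad
\SingSeries=\sum_{q}\sum_{(a,q)=1}|q^{-1}S(q,a)|^{p},
\quad
\JJ=\int_\R|v(\gamma)|^p\dd\gamma,
\]
where $S(q,a)=\sum_{r\bmod q}e(ar^d/q)$ and $v(\gamma)=\int_0^1e(\gamma u^d)\dd u$. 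Both the series and the integral converge absolutely, since $|q^{-1}S(q,a)|\ll q^{-1/d}$, $|v(\gamma)|\ll|\gamma|^{-1/d}$, and $p/d>2$.

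For the major arcs with fixed $Q$, the one-variable case of the residue-class Riemann-sum argument in the proof of Proposition~\ref{prop:Birchenergy} gives
\[
N^{-1}W_{d,N}(a/q+\gamma N^{-d})=q^{-1}S(q,a)v(\gamma)+O_{Q}(N^{-1})
\]
uniformly for $|\gamma|\le Q$. Changing variables, exactly as in \eqref{eq:truncatedenergy}, yields
\[
\lim_{N\to\infty}N^{d-p}\int_{\M_N(Q)}|W_{d,N}|^p\dd\alpha=\SingSeries(Q)\JJ(Q),
\]
where $\SingSeries(Q)$ and $\JJ(Q)$ are the truncations to $q\le Q$ and $|\gamma|\le Q$. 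Subtracting this from the full moment and letting $Q\to\infty$ gives \eqref{eq:endpoint}.

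The main obstacle is the full-moment asymptotic at the endpoint $s=2^d$. Hua's lemma alone only gives the bound $N^{p-d+\varepsilon}$, which loses an $\varepsilon$. The plan is to invoke Vaughan's theorem (\emph{On Waring's problem for smaller exponents II}, and Vaughan's book, Theorem 4.4 and its $2^d$ refinement). This gives the asymptotic, or equivalently the minor-arc bound $\int_{\m}|W|^{2^d}=o(N^{2^d-d})$ on Vaughan's wider minor arcs. If the asymptotic is quoted in the form ``minor arcs $=o(N^{p-d})$'' for arcs defined with a parameter $P^{\eta}$, one must pass from those arcs to $\m_N(Q)$. The intermediate region, with $Q<q$ or $Q<|\gamma|\le N^{\eta}$, is handled by the standard major-arc approximation with error: there
\[
|W_{d,N}(\alpha)|\ll N q^{-1/d}(1+|\gamma|)^{-1/d}+q^{1/2+\varepsilon},
\]
and integrating the $p$-th power over this region gives a quantity that is $\ll$ the tails of $\SingSeries$ and $\JJ$, which tend to $0$ as $Q\to\infty$.
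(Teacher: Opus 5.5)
Your proposal is correct and, for $d\ge4$, coincides with the paper's proof: it takes Vaughan's asymptotic for $\int_{\T}|W_{d,N}|^{2^d}\dd\alpha$ and subtracts the fixed-$Q$ major-arc limit $\SingSeries_d(Q)\JJ_d(Q)$, which comes from the one-variable case of the Riemann-sum argument in Proposition~\ref{prop:Birchenergy}. For $d=3$ the paper does not quote a full eighth-moment asymptotic but runs exactly your fallback, pruning from the broad arcs $q\le N^{3/4}$, $|\beta|\le q^{-1}N^{-9/4}$ via $\int_{\widehat{\m}_N}|W_{3,N}|^8\dd\alpha\ll N^5(\log N)^{-2}$ and $|W_{3,N}(a/q+\beta)|\ll Nq^{-1/3}(1+N^3|\beta|)^{-1/3}$, so for $d=3$ you should make sure the input you cite is the cubic one (Vaughan's eight-cubes work or that minor-arc bound), since the paper invokes Vaughan's $2^d$ asymptotic only for $d\ge4$; two harmless slips are that the error term in your pointwise bound should be $q^{1/2+\varepsilon}(1+|\gamma|)^{1/2}$, and that $2^d\ge d(d+1)$ holds for $d\ge5$, not for small $d$.
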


\begin{proof}

We prove \eqref{eq:endpoint} separately for $d=3$ and $d\ge4$. 

\medskip

\noindent\textit{The  case $d=3$.}
Let
\[
 \widehat{\mathfrak M}_N
 =
 \bigcup_{\substack{1\le q\le N^{3/4}\\0\le a<q\\(a,q)=1}}
 \widehat{\mathfrak M}_N(q,a),
\]
where
\[
 \widehat{\mathfrak M}_N(q,a)
 =
 \left\{\frac aq+\beta\pmod1:
 |\beta|\le q^{-1}N^{-9/4}\right\},
 \qquad
 \widehat{\mathfrak m}_N=\mathbb T\setminus\widehat{\mathfrak M}_N.
\]
In this notation, the mean value estimates  in
\cite[(2.6), (2.8)]{BW} give
\begin{equation}\label{eq:cubic-broad-minor}
 \int_{\widehat{\mathfrak m}_N}|W_{3,N}(\alpha)|^8\,d\alpha
 \ll N^5(\log N)^{-2}
\end{equation}
and, whenever $a/q+\beta\in\widehat{\mathfrak M}_N(q,a)$,
\begin{equation}\label{eq:cubic-broad-pointwise}
 |W_{3,N}(a/q+\beta)|
 \ll Nq^{-1/3}(1+N^3|\beta|)^{-1/3}.
\end{equation}

Decompose the integral as
\[
 \int_{\mathfrak m_N(Q)}|W_{3,N}(\alpha)|^8\,d\alpha
 =
 \int_{\mathfrak m_N(Q)\cap\widehat{\mathfrak m}_N}
 |W_{3,N}(\alpha)|^8\,d\alpha
 +
 \int_{\mathfrak m_N(Q)\cap\widehat{\mathfrak M}_N}
 |W_{3,N}(\alpha)|^8\,d\alpha.
\]
The first term is bounded by \eqref{eq:cubic-broad-minor}.
For the second term, observe that if
\[
 \alpha=\frac aq+\beta
 \in\mathfrak m_N(Q)\cap\widehat{\mathfrak M}_N(q,a),
\]
then either $q>Q$, or $q\le Q$ and $|\beta|>QN^{-3}$. Raising \eqref{eq:cubic-broad-pointwise} to the eighth power
and making the substitution $u=N^3\beta$, we obtain
\[
 N^{-5}|W_{3,N}(a/q+\beta)|^8\,d\beta
 \ll q^{-8/3}(1+|u|)^{-8/3}\,du.
\]

The contribution from denominators $q>Q$ is at most
\[
 \begin{aligned}
 &N^{-5}
 \sum_{Q<q\le N^{3/4}}
 \sum_{\substack{0\le a<q\\(a,q)=1}}
 \int_{\widehat{\mathfrak M}_N(q,a)}
 |W_{3,N}(\alpha)|^8\,d\alpha\\
 &\qquad\ll
 \sum_{q>Q}\varphi(q)q^{-8/3}
 \int_{\mathbb R}(1+|u|)^{-8/3}\,du\\
 &\qquad\ll
 \sum_{q>Q}q^{-5/3}
 \ll Q^{-2/3}.
 \end{aligned}
\]

For $q\le Q$, the contribution from $|\beta|>QN^{-3}$
is at most
\[
 \begin{aligned}
 &N^{-5}
 \sum_{q\le Q}
 \sum_{\substack{0\le a<q\\(a,q)=1}}
 \int_{\substack{QN^{-3}<|\beta|\le q^{-1}N^{-9/4}}}
 |W_{3,N}(a/q+\beta)|^8\,d\beta\\
 &\qquad\ll
 \sum_{q\le Q}q^{-5/3}
 \int_{|u|>Q}(1+|u|)^{-8/3}\,du
 \ll Q^{-5/3}.
 \end{aligned}
\]

Combining these estimates gives
\begin{equation}\label{eq:cubicendpoint}
 N^{-5}\int_{\mathfrak m_N(Q)}
 |W_{3,N}(\alpha)|^8\,d\alpha
 \ll (\log N)^{-2}+Q^{-2/3},
\end{equation}
for every fixed $Q\ge1$ and all sufficiently large $N$.
Letting $N\to\infty$ and then $Q\to\infty$ in \eqref{eq:cubicendpoint} proves  \eqref{eq:endpoint}  for $d=3$.

\medskip
\noindent\textit{The case $d\ge4$.}
Notice that
\[
\int_{\mathbb T}|W_{d,N}(\alpha)|^p\,d\alpha
=
\#\left\{
(\mathbf x,\mathbf y)\in[N]^s\times[N]^s:
\sum_{j=1}^s x_j^d=\sum_{j=1}^s y_j^d
\right\},
\qquad s=p/2=2^{d-1}.
\]

Write
\[
 C_d(q,a)=\sum_{r\bmod q}e(ar^d/q),
 \qquad
 V_d(\gamma)=\int_0^1e(\gamma t^d)\,dt.
\]
For every $\varepsilon>0$, Vaughan's  mean-value asymptotic
\cite[Theorem 2]{Vaughan} states that
\begin{equation}\label{eq:Vaughantotal}
 \int_{\mathbb T}|W_{d,N}(\alpha)|^p\,d\alpha
 =
 \mathfrak S_d\mathfrak J_d N^{p-d}
 +O_{d,\varepsilon}\bigl(N^{p-d}(\log N)^{\varepsilon-2}\bigr),
\end{equation}
where
\[
 \mathfrak S_d
 =
 \sum_{q=1}^{\infty}
 \sum_{\substack{0\le a<q\\(a,q)=1}}
 |q^{-1}C_d(q,a)|^p,
 \qquad
 \mathfrak J_d
 =
 \int_{\mathbb R}|V_d(\gamma)|^p\,d\gamma.
\]

Similar to the proof of \eqref{eq:taildifference} in Proposition~\ref{prop:Birchenergy},  for each fixed $Q$, by \eqref{eq:Vaughantotal} we have
\[
 \lim_{N\to\infty}
 N^{d-p}\int_{\mathfrak m_N(Q)}
 |W_{d,N}(\alpha)|^p\,d\alpha
 =
 \mathfrak S_d\mathfrak J_d
 -\mathfrak S_d(Q)\mathfrak J_d(Q),
\]
where
\[
 \mathfrak S_d(Q)
 =
 \sum_{q\le Q}
 \sum_{\substack{0\le a<q\\(a,q)=1}}
 |q^{-1}C_d(q,a)|^p,
 \qquad
 \mathfrak J_d(Q)
 =
 \int_{-Q}^{Q}|V_d(\gamma)|^p\,d\gamma.
\]
Then \eqref{eq:endpoint}  follows by taking $Q\to\infty$.
\end{proof}

For $k\ge2^{d-1}$, using the trivial bound $|W_{d,N}|\le N$, we have
$|W_{d,N}|^{2k}\le N^{2k-2^d}|W_{d,N}|^{2^d}$. By Proposition~\ref{input:endpoint}, we obtain the following corollary.

\begin{corollary}\label{cor:classicaldiagonal}
For $d\ge3$ and $k\ge2^{d-1}$,
\begin{equation}\label{eq:diagonalenergy}
 \lim_{Q\to\infty}\limsup_{N\to\infty}
 N^{d-2k}\int_{\m_N(Q)}|W_{d,N}(\alpha)|^{2k}\dd\alpha=0.
\end{equation}
\end{corollary}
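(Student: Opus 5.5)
The corollary follows from Proposition~\ref{input:endpoint} by a pointwise comparison of integrands. Fix $d\ge3$ and $k\ge2^{d-1}$, and put $p=2^d$, so that $2k\ge p$.

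The trivial bound $|W_{d,N}(\alpha)|\le N$ holds for every $\alpha\in\T$. Since $2k-p\ge0$, it gives
\[
 |W_{d,N}(\alpha)|^{2k}=|W_{d,N}(\alpha)|^{2k-p}\,|W_{d,N}(\alpha)|^{p}\le N^{2k-p}|W_{d,N}(\alpha)|^{p}.
\]
Integrate this over the minor arcs $\m_N(Q)$ and multiply by $N^{d-2k}$. The powers of $N$ combine as $N^{d-2k}\cdot N^{2k-p}=N^{d-p}$, so for every $Q$ and $N$,
\[
 0\le N^{d-2k}\int_{\m_N(Q)}|W_{d,N}(\alpha)|^{2k}\dd\alpha\le N^{d-p}\int_{\m_N(Q)}|W_{d,N}(\alpha)|^{p}\dd\alpha.
\]

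Now take $\limsup_{N\to\infty}$ with $Q$ fixed. This is monotone, so the inequality is preserved. Then let $Q\to\infty$. By \eqref{eq:endpoint} in Proposition~\ref{input:endpoint}, the right-hand side tends to $0$. The left-hand side is nonnegative and dominated by it, so it also tends to $0$, which is \eqref{eq:diagonalenergy}.

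There is no real obstacle here. The only point to check is the exponent bookkeeping, namely that the hypothesis $k\ge2^{d-1}$ is exactly what makes $2k-p\ge0$, so the trivial bound can be applied to the surplus power. All the analytic input, both for $d=3$ and for $d\ge4$, is already contained in Proposition~\ref{input:endpoint}.
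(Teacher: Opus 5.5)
Your proposal is correct and is essentially the paper's argument: bound the surplus power via $|W_{d,N}|^{2k}\le N^{2k-2^d}|W_{d,N}|^{2^d}$ using $|W_{d,N}|\le N$, then invoke Proposition~\ref{input:endpoint}. Your exponent bookkeeping and handling of the iterated limits are both fine.
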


Now, we optimize the interpolation for every $d\ge4$. Define the larger
major arcs
\begin{equation}\label{eq:largearcs}
 \begin{gathered}
 \M_N^*(q,a)=\left\{\alpha\in\T:
 \left\|\alpha-\frac aq\right\|\le\frac{N^{1-d}}{2dq}\right\},\\
 \M_N^*=\bigcup_{\substack{1\le q\le N/(2d)\\0\le a<q,\ (a,q)=1}}
 \M_N^*(q,a),\qquad \m_N^*=\T\setminus\M_N^*.
 \end{gathered}
\end{equation}
The major arcs are pairwise disjoint for sufficiently
large $N$. For fixed $Q$ and $N\ge2dQ^2$, one has
$\M_N(Q)\subset\M_N^*$; Proposition~\ref{prop:transfer} still uses the
fixed arcs $\M_N(Q)$, with $N\to\infty$ before $Q\to\infty$.

\begin{proposition}\label{input:modern}
For $d\ge4$ and every $\eta>0$,
\begin{align}
 & \int_{\m_N^*}|W_{d,N}(\alpha)|^{d(d+1)}\dd\alpha
\ll_{d,\eta}N^{d(d+1)-d-1+\eta},\label{eq:VMVTminor}\\
& \int_{\T}|W_{d,N}(\alpha)|^{a_j}\dd\alpha
 \ll_{d,\eta}N^{a_j-j+\eta}
 \quad(1\le j\le d-1),\label{eq:lowmoments}
\end{align}
where $a_j=\min\{2^j,j(j+1)\}$. 

On the arc of
\eqref{eq:largearcs} centered at $a/q$, put
\[
 C_d(q,a)=\sum_{r\bmod q}e(ar^d/q),\qquad
 v_{d,N}(\beta)=\int_0^N e(\beta t^d)\dd t.
\]
The standard major-arc approximation and estimates are
\begin{align}
 W_{d,N}(a/q+\beta)
 &=q^{-1}C_d(q,a)v_{d,N}(\beta)
 +O_{d,\eta}\!\left(q^{1/2+\eta}(1+N^d|\beta|)^{1/2}\right),
 \label{eq:majorapproxWeyl}\\
 |q^{-1}C_d(q,a)|&\ll_{d,\eta}q^{-1/d+\eta},\qquad
 |v_{d,N}(\beta)|\ll_d N(1+N^d|\beta|)^{-1/d}.
 \label{eq:majorWeyl}
\end{align}
\end{proposition}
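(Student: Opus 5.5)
This proposition collects standard inputs, so I would prove it by citing and assembling known results rather than by fresh argument.

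For \eqref{eq:VMVTminor}, I would use the resolution of the main conjecture in Vinogradov's mean value theorem (Bourgain--Demeter--Guth for general $d$; Wooley's efficient congruencing for $d=3$, though here $d\ge4$). It gives $J_{s,d}(N)\ll N^{2s-d(d+1)/2+\eta}$ at $s=d(d+1)/2$. The transfer goes through the standard minor-arc Weyl bound \cite[Theorem 5.2]{Vaughan-style}: if $\alpha\in\m_N^*$, Dirichlet approximation with $q\le 2dN^{d-1}$ and $|\alpha-a/q|\le (2dqN^{d-1})^{-1}$ forces $q>N/(2d)$. I would then use the modern Weyl inequality $\sup_{\m_N^*}|W_{d,N}|\ll N^{1-1/(d(d-1))+\eta}$, which follows from Vinogradov's mean value theorem (Bourgain's bound with exponent $\sigma=1/(d(d-1))$). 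Alternatively, and more efficiently, I would use the direct minor-arc mean value $\int_{\m}|W|^{d(d+1)}\ll N^{d(d+1)-d-1+\eta}$, Wooley's ``Vinogradov's mean value theorem via efficient congruencing'' type result \cite{Wooley}. This comes from combining $\int_\T|W|^{d(d+1)}\ll N^{d(d+1)-d+\eta}$ (from VMVT by the usual shift to Vinogradov systems) with a pointwise saving of size $N^{-1/\cdots}$. The extra factor $N^{-1}$ on the minor arcs is the point needing care. I would take it from the known estimate $\int_{\m}|W|^{2s}\ll N^{2s-d-1+\eta}$ for $s\ge d(d+1)/2$ (e.g.\ Wooley, ``Mean value estimates for odd cubic Weyl sums'' / Vaughan--Wooley survey, Theorem on minor-arc moments). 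I expect the precise citation of this minor-arc saving to be the main obstacle, since it is slightly stronger than the full-circle bound.

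For \eqref{eq:lowmoments}, I would treat the two branches of $a_j=\min\{2^j,j(j+1)\}$ separately. When $a_j=2^j$, Hua's lemma gives $\int_\T|W_{d,N}|^{2^j}\ll N^{2^j-j+\eta}$ for $1\le j\le d$. When $a_j=j(j+1)$, the bound follows from VMVT for degree $j$ together with the standard inequality (Vaughan, \emph{The Hardy--Littlewood Method}, or Wooley's work) $\int_\T|W_{d,N}|^{j(j+1)}\ll N^{j(j+1)-j+\eta}$ for $j\le d$. This inequality is obtained by counting solutions of $\sum x_i^d=\sum y_i^d$ through the Vinogradov system of the first $j$ power sums after fixing the values of the lower-degree sums.

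For \eqref{eq:majorapproxWeyl}, I would cite \cite[Theorem 4.1]{Vaughan} (Vaughan's book). The bound $|q^{-1}C_d(q,a)|\ll q^{-1/d+\eta}$ is \cite[Theorem 4.2]{Vaughan}, and the bound on $v_{d,N}$ is \cite[Theorem 7.3]{Vaughan}. Each of these holds uniformly in the ranges $q\le N/(2d)$ and $|\beta|\le N^{1-d}/(2dq)$ defining \eqref{eq:largearcs}.
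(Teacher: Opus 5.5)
Your overall plan of assembling the inputs from the literature is the same as the paper's. The Hua branch of \eqref{eq:lowmoments} and the major-arc facts \eqref{eq:majorapproxWeyl}--\eqref{eq:majorWeyl}, taken from Vaughan's book (Theorems 4.1 and 4.2), are fine. The trouble is in \eqref{eq:VMVTminor}: the derivation you sketch cannot work. Write $\int_{\m_N^*}|W_{d,N}|^{d(d+1)}\le\sup_{\m_N^*}|W_{d,N}|^{t}\int_\T|W_{d,N}|^{u}$ with $u=d(d+1)-t$. The full-circle factor can save at most $\min\{d,u/2\}$ powers of $N$, because of the diagonal solutions and because the neighbourhood of $\alpha=0$ contributes $\gg N^{u-d}$. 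The supremum, with the exponent $1/(d(d-1))$ you quote, saves $t/(d(d-1))$. Maximising the total saving, you reach $d+1$ only at $u=2d$, and only if the diagonal bound $\int_\T|W_{d,N}|^{2d}\ll N^{d+\eta}$ holds, which is open for $d\ge3$. So the extra $N^{-1}$ at the critical exponent $d(d+1)$ is a genuinely separate minor-arc mean value theorem, not a corollary of VMVT plus a Weyl bound. You flag this as the obstacle but leave it unresolved, and neither of the works you point to supplies it for general $d$. The paper takes it directly from Wooley's nested efficient congruencing paper \cite[Theorem 14.4(i)]{WooleyNested}.

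The second gap is the $j(j+1)$ branch of \eqref{eq:lowmoments}. It is active for $j\ge5$, hence for $d\ge6$, and it is what produces the quadratic growth of $K(d)$. It does not follow from Vinogradov's mean value theorem of degree $j$. When $j<d$, the system $\sum_i x_i^l=\sum_i y_i^l$ $(1\le l\le j)$ does not involve the equation $\sum_i x_i^d=\sum_i y_i^d$ at all, so fixing its lower power sums says nothing about $\int_\T|W_{d,N}|^{j(j+1)}$. Going through degree-$d$ VMVT in the diagonal range instead gives only $N^{d(d-1)/2+j(j+1)/2+\eta}$, which exceeds $N^{j^2}$ whenever $j<d$. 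What is needed is a bound $\ll N^{j(j+1)/2+\eta}$ for the system with exponents $1,\dots,j-1,d$, which comes from decoupling for the non-degenerate curve $(t,\dots,t^{j-1},t^d)$. Fixing the $j-1$ lower sums then costs only $N^{j(j-1)/2}$, giving $N^{j^2+\eta}$. This is Bourgain's estimate \cite[Theorem 10]{Bourgain}, which the paper cites. It is not in Vaughan's book, which predates it.
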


\begin{proof}

Estimate \eqref{eq:VMVTminor} follows directly from
\cite[Theorem 14.4(i)]{WooleyNested}.
For \eqref{eq:lowmoments}, Hua's lemma  \cite{Hua}  and Bourgain's estimate \cite[Theorem 10]{Bourgain}, respectively, give
\[
 \int_{\T}|W_{d,N}|^{2^j}\dd\alpha
 \ll_{d,\eta}N^{2^j-j+\eta},\qquad
 \int_{\T}|W_{d,N}|^{j(j+1)}\dd\alpha
 \ll_{d,\eta}N^{j^2+\eta}
 \quad(1\le j\le d).
\]
So \eqref{eq:lowmoments} holds for $a_j=\min\{2^j,j(j+1)\}$. 

Equations \eqref{eq:majorapproxWeyl} and \eqref{eq:majorWeyl} follow by
\cite[Theorems 4.1 and 4.2]{HL}.
\end{proof}

\begin{lemma}\label{lem:interpolation}
Let $d\ge4$, and define
\[
 D_d=d(d+1),\qquad
 \theta_{d,j}=D_d-\frac{D_d-a_j}{d+1-j},\qquad
 \Theta_d=\min_{1\le j\le d-1}\theta_{d,j}.
\]
If
\begin{equation}\label{eq:pcondition}
 p>\Theta_d,
\end{equation}
then there is $\delta=\delta(d,p)>0$ such that
\begin{equation}\label{eq:pminor}
 \int_{\m_N^*}|W_{d,N}(\alpha)|^p\dd\alpha
 \ll_{d,p}N^{p-d-\delta}.
\end{equation}
Moreover, $\Theta_d>D_d/2>2d$.
\end{lemma}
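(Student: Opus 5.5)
The approach is Hölder interpolation between the minor-arc Vinogradov bound \eqref{eq:VMVTminor} and a low moment \eqref{eq:lowmoments}, for a single well-chosen $j$. Fix $j\in\{1,\dots,d-1\}$ with $\theta_{d,j}=\Theta_d<p$. First dispose of the range $p\ge D_d$. There the trivial bound $|W_{d,N}|\le N$ gives
\[
\int_{\m_N^*}|W_{d,N}|^p\le N^{p-D_d}\int_{\m_N^*}|W_{d,N}|^{D_d}\ll N^{p-d-1+\eta},
\]
which yields $\delta=1-\eta$.

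So assume $\Theta_d<p<D_d$. Since $a_j\le j(j+1)<D_d$ and $\theta_{d,j}\ge a_j$, we have $a_j<p<D_d$. Write $p=(1-t)a_j+tD_d$ with $t\in(0,1)$. By Hölder on $\m_N^*$ (extending the $a_j$-moment to all of $\T$),
\[
\int_{\m_N^*}|W|^p\le\Bigl(\int_{\T}|W|^{a_j}\Bigr)^{1-t}\Bigl(\int_{\m_N^*}|W|^{D_d}\Bigr)^{t}
\ll N^{(1-t)(a_j-j)+t(D_d-d-1)+\eta}.
\]
The exponent equals $p-(1-t)j-t(d+1)+\eta$. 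We need $(1-t)j+t(d+1)>d$, that is, $t(d+1-j)>d-j$, or equivalently $1-t<1/(d+1-j)$. Now $1-t=(D_d-p)/(D_d-a_j)$, so the requirement is $p>D_d-(D_d-a_j)/(d+1-j)=\theta_{d,j}$, which is exactly our hypothesis. Choosing $\eta$ small relative to the gap gives $\delta>0$.

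The only care needed is the bookkeeping in that last equivalence. Note also that the low-moment bound is used over all of $\T$, which is why no minor-arc version of \eqref{eq:lowmoments} is required.

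For the inequality $\Theta_d>D_d/2$, observe that
\[
\theta_{d,j}-\frac{D_d}{2}=\frac{D_d(d-1-j)+2a_j}{2(d+1-j)}.
\]
For $j\le d-1$ the numerator is positive, since $d-1-j\ge0$ and $a_j\ge2$. Hence every $\theta_{d,j}>D_d/2$, and therefore $\Theta_d>D_d/2$.

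Finally, $D_d/2=d(d+1)/2>2d$ holds if and only if $d+1>4$, which is true for $d\ge4$.
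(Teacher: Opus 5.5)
Your proof is correct and follows essentially the same route as the paper: Hölder interpolation between the full-torus $a_j$-moment \eqref{eq:lowmoments} and the minor-arc $D_d$-moment \eqref{eq:VMVTminor} for a $j$ attaining $\Theta_d$ when $\Theta_d<p<D_d$, the trivial bound $|W_{d,N}|\le N$ when $p\ge D_d$, and a direct computation of $\theta_{d,j}-D_d/2$ for the final inequality. The only difference is cosmetic: your interpolation weight $t$ is the paper's $1-\lambda$.
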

\begin{proof}
For $1\le j\le d-1$ we have $0<a_j<D_d$ and $h_j:=d+1-j\ge2$.
Consequently
\[
 a_j<\theta_{d,j}<D_d,\qquad
 \theta_{d,j}-\frac{D_d}{2}
 =\left(\frac12-\frac1{h_j}\right)D_d+\frac{a_j}{h_j}>0.
\]
Taking the minimum proves $\Theta_d>D_d/2>2d$ for $d\ge4$.

Choose $j$ attaining $\Theta_d$. If $\Theta_d<p<D_d$, put
\[
 \lambda=\frac{D_d-p}{D_d-a_j}\in(0,1),\qquad
 p=\lambda a_j+(1-\lambda)D_d.
\]
H\"older's inequality and
\eqref{eq:lowmoments}, \eqref{eq:VMVTminor} give
\begin{align*}
 \int_{\m_N^*}|W_{d,N}|^p
 &\le
 \left(\int_{\T}|W_{d,N}|^{a_j}\right)^\lambda
 \left(\int_{\m_N^*}|W_{d,N}|^{D_d}\right)^{1-\lambda}\\
 &\ll_{d,p,\eta}
 N^{\lambda(a_j-j)+(1-\lambda)(D_d-d-1)+\eta}
 =N^{p-d-\Delta+\eta},
\end{align*}
where
\[
 \Delta=1-\frac{(D_d-p)(d+1-j)}{D_d-a_j}>0
 \quad\Longleftrightarrow\quad
 p>\theta_{d,j}=\Theta_d.
\]
Take $\eta=\Delta/2$ and $\delta=\Delta/2$. If $p\ge D_d$,
the trivial bound $|W_{d,N}|\le N$ instead gives
\[
 \int_{\m_N^*}|W_{d,N}|^p
 \le N^{p-D_d}\int_{\m_N^*}|W_{d,N}|^{D_d}
 \ll_{d,p,\eta}N^{p-d-1+\eta}.
\]
Taking $\eta=1/2$ proves the claim, including $p=D_d$.
\end{proof}

\begin{lemma}\label{lem:pruning}
Suppose $p>2d$ and \eqref{eq:pminor} holds. Then
\begin{equation}\label{eq:pfixed}
 \lim_{Q\to\infty}\limsup_{N\to\infty}
 N^{d-p}\int_{\m_N(Q)}|W_{d,N}(\alpha)|^p\dd\alpha=0.
\end{equation}
\end{lemma}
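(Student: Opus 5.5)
We split $\m_N(Q)$ into $\m_N(Q)\cap\m_N^*$ and $\m_N(Q)\cap\M_N^*$. On the first set, \eqref{eq:pminor} gives
\[
N^{d-p}\int_{\m_N(Q)\cap\m_N^*}|W_{d,N}|^p\dd\alpha\ll N^{-\delta}\to0,
\]
uniformly in $Q$. So everything reduces to the part of the large major arcs $\M_N^*$ that lies outside the fixed arcs $\M_N(Q)$. For $N\ge 2dQ^2$ we have $\M_N(Q)\subset\M_N^*$, and the arcs $\M_N^*(q,a)$ are disjoint. Hence a point $\alpha=a/q+\beta\in\M_N^*(q,a)\cap\m_N(Q)$ satisfies either $q>Q$, or $q\le Q$ and $|\beta|>QN^{-d}$.

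On $\M_N^*(q,a)$ we use the approximation \eqref{eq:majorapproxWeyl} together with the bounds \eqref{eq:majorWeyl}. On this arc $q\le N/(2d)$ and $N^d|\beta|\le N/(2dq)$, so the error term is $O(q^{1/2+\eta}(N/q)^{1/2})=O(N^{1/2+\eta})$. This gives the pointwise bound
\[
|W_{d,N}(a/q+\beta)|\ll q^{-1/d+\eta}N(1+N^d|\beta|)^{-1/d}+N^{1/2+\eta}.
\]
Raising to the $p$th power and integrating, we treat the two terms separately.

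\emph{Main term.} Substituting $u=N^d\beta$, its contribution to $N^{d-p}\int$ is at most
\[
\sum_{q}\varphi(q)\,q^{-p/d+p\eta}\int(1+|u|)^{-p/d}\dd u,
\]
restricted to $q>Q$ (all $u$) or to $q\le Q$, $|u|>Q$. Since $p>2d$, we have $p/d>2$, and choosing $\eta$ small makes the exponent $1-p/d+p\eta<-1$. The $q$-sum then converges with tail $\ll Q^{2-p/d+p\eta}$, and the $u$-integral converges with tail $\ll Q^{1-p/d}$. Both tend to $0$ as $Q\to\infty$, uniformly in $N$.

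\emph{Error term.} Its contribution is at most $N^{p/2+p\eta}|\M_N^*|$. Summing the arc lengths $N^{1-d}/(dq)$ over the $\varphi(q)\le q$ fractions for each $q\le N/(2d)$ gives $|\M_N^*|\ll N^{2-d}$. After multiplying by $N^{d-p}$, this is $N^{2-p/2+p\eta}$, which tends to $0$ since $p>2d\ge8>4$.

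The main obstacle is this error-term accounting: the pointwise error $N^{1/2+\eta}$ is not small relative to the main term near the edges of the large arcs. If the crude bound above turned out to be too weak, I would instead apply the inequality $|x+y|^p\le2^{p-1}(|x|^p+|y|^p)$ with more care, or interpolate between the error and the main term. Here, though, the margin $p>4$ suffices. Combining the three estimates, we take $\limsup_{N\to\infty}$ first and then let $Q\to\infty$, which proves \eqref{eq:pfixed}.
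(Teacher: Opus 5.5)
Your proposal is correct and follows the paper's proof step for step. You split into $\m_N^*$ and $\M_N^*\setminus\M_N(Q)$, derive the uniform error bound $N^{1/2+\eta}$ from \eqref{eq:majorapproxWeyl}, and obtain the main-term tails $Q^{2-p/d+p\eta}$ and $Q^{1-p/d}$, exactly as the paper does. The one difference is that you bound the error contribution using $|\M_N^*|\ll N^{2-d}$, giving $N^{2-p/2+p\eta}$. The paper instead uses $|\M_N^*|\le 1$ and gets $N^{d-p/2+p\eta}$, which already tends to $0$ because $p>2d$, so your hedging about that step was unnecessary.
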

\begin{proof}

For sufficiently large $N$, we have $\mathfrak M_N(Q)\subset \mathfrak M_N^*$, and
\(
 \mathfrak m_N(Q)
 =
 \mathfrak m_N^*
 \,\cup\,
 \bigl(\mathfrak M_N^*\setminus\mathfrak M_N(Q)\bigr).
\)
By Lemma~\ref{lem:interpolation}, we have
\[
 N^{d-p}\int_{\mathfrak m_N^*}
 |W_{d,N}(\alpha)|^p\,d\alpha
 \ll_{d,p} N^{-\delta}.
\]
So it remains to estimate the contribution from
$\mathfrak M_N^*\setminus\mathfrak M_N(Q)$.

Since each
$\alpha\in\mathfrak M_N^*$ belongs to a unique arc
$\mathfrak M_N^*(q,a)$, write
\[
 \alpha=\frac aq+\beta\pmod1,
 \qquad
 |\beta|\le\frac{N^{1-d}}{2dq},
\]
and define
\[
 A_N(\alpha)
 =A_{q,a}(\beta):=q^{-1}C_d(q,a)v_{d,N}(\beta),
 \qquad
 E_N(\alpha)
 :=W_{d,N}(\alpha)-A_N(\alpha).
\]
Then
\[
 W_{d,N}(\alpha)=A_N(\alpha)+E_N(\alpha)
 \qquad (\alpha\in\mathfrak M_N^*).
\]

By $q\leq N/(2d)$ and $|\beta| \leq N^{1-d}/(2qd)$,
\[
 q(1+N^d|\beta|)
 =
 q+N^dq|\beta|
 \le\frac N{2d}+\frac N{2d}
 =\frac Nd.
\]
By \eqref{eq:majorapproxWeyl}, we have
\[
 |E_N(\alpha)|
 \ll_{d,\eta}
 q^{1/2+\eta}(1+N^d|\beta|)^{1/2}
 \ll_{d,\eta}N^{1/2+\eta}
\]
uniformly on $\mathfrak M_N^*$.
It follows that 
\[
 \begin{aligned}
 N^{d-p}\int_{\mathfrak M_N^*}|E_N(\alpha)|^p\,d\alpha
 &\ll_{d,p,\eta}
 N^{d-p}N^{p/2+p\eta}=N^{d-p/2+p\eta}.
 \end{aligned}
\]

For $p\ge1$, by the convexity inequality
\(
 (x+y)^p\le2^{p-1}(x^p+y^p)
\)
 for any $x,y\ge0$, we have
\[
 |W_{d,N}(\alpha)|^p
 \le\bigl(|A_N(\alpha)|+|E_N(\alpha)|\bigr)^p
 \le2^{p-1}\bigl(|A_N(\alpha)|^p+|E_N(\alpha)|^p\bigr).
\]
So it remains to estimate the main term.

By \eqref{eq:majorWeyl},
\[
 |A_{q,a}(\beta)|^p
 \ll_{d,p,\eta}
 N^p q^{-p/d+p\eta}
 (1+N^d|\beta|)^{-p/d}.
\]
Making the substitution $u=N^d\beta$, we find that, for
any measurable set $I$ contained in the corresponding
$\beta$-interval 
\(
\left[-\frac{N^{1-d}}{2dq},
       \frac{N^{1-d}}{2dq}\right],
\)
\[
 \begin{aligned}
 N^{d-p}\int_I|A_{q,a}(\beta)|^p\,d\beta
 &\ll_{d,p,\eta}
 q^{-p/d+p\eta}
 \int_{N^d I}(1+|u|)^{-p/d}\,du.
 \end{aligned}
\]

For the contribution from $q>Q$, we have
\[
 \begin{aligned}
 &N^{d-p}
 \sum_{Q<q\le N/(2d)}
 \sum_{\substack{0\le a<q\\(a,q)=1}}
 \int_{|\beta|\le N^{1-d}/(2dq)}
 |A_{q,a}(\beta)|^p\,d\beta\\
 &\qquad\ll_{d,p,\eta}
 \sum_{q>Q}q^{1-p/d+p\eta}
 \int_{\mathbb R}(1+|u|)^{-p/d}\,du\\
 &\qquad\ll_{d,p,\eta}
 Q^{2-p/d+p\eta}.
 \end{aligned}
\]

Next, suppose that $q\le Q$ and
\[
 \alpha=\frac aq+\beta
 \in\mathfrak M_N^*(q,a)\setminus\mathfrak M_N(Q).
\]
Then $|\beta|>QN^{-d}$, $|u|>Q$, and the contribution from these
arcs is at most
\[
 \begin{aligned}
 &N^{d-p}
 \sum_{q\le Q}
 \sum_{\substack{0\le a<q\\(a,q)=1}}
 \int_{\substack{|\beta|\le N^{1-d}/(2dq)\\
                 a/q+\beta\notin\mathfrak M_N(Q)}}
 |A_{q,a}(\beta)|^p\,d\beta\\
 &\qquad\ll_{d,p,\eta}
 \sum_{q\le Q}q^{1-p/d+p\eta}
 \int_{|u|>Q}(1+|u|)^{-p/d}\,du\\
 &\qquad\ll_{d,p,\eta}Q^{1-p/d}.
 \end{aligned}
\]

Combining the preceding estimates, we obtain, for fixed
$Q$ and sufficiently large $N$,
\[
 N^{d-p}\int_{\mathfrak m_N(Q)}
 |W_{d,N}(\alpha)|^p\,d\alpha \ll_{d,p,\eta}
 N^{-\delta}
 +N^{d-p/2+p\eta}
 +Q^{2-p/d+p\eta}
 +Q^{1-p/d},
\]
where $\eta>0$ is sufficiently small.
Then \eqref{eq:pfixed} follows by letting  $N\to\infty$ first  and then 
$Q\to\infty$.
\end{proof}

\begin{proposition}\label{prop:modernenergy}
For $d\ge4$ and $k\ge K(d)=\lfloor\Theta_d/2\rfloor+1$, the
 property \eqref{eq:diagonalenergy} holds. That is,
\begin{equation*}
 \lim_{Q\to\infty}\limsup_{N\to\infty}
 N^{d-2k}\int_{\m_N(Q)}|W_{d,N}(\alpha)|^{2k}\dd\alpha=0.
\end{equation*}
\end{proposition}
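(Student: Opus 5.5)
The plan is to feed $p=2k$ into Lemma~\ref{lem:interpolation} and then Lemma~\ref{lem:pruning}. No new analytic input is needed; the work is to check the numerical hypotheses of the two lemmas.

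\emph{Checking $p>\Theta_d$.} Put $p=2k$. Since $k\ge K(d)=\lfloor\Theta_d/2\rfloor+1$, we have $k>\Theta_d/2$, because $\lfloor x\rfloor+1>x$ for every real $x$. Hence $p=2k>\Theta_d$. This holds even when $\Theta_d$ is not an integer or is an even integer, so condition \eqref{eq:pcondition} is satisfied. Lemma~\ref{lem:interpolation} then supplies $\delta=\delta(d,2k)>0$ with
\[
\int_{\m_N^*}|W_{d,N}(\alpha)|^{2k}\dd\alpha\ll_{d,k}N^{2k-d-\delta}.
\]
That lemma covers both ranges, $\Theta_d<p<D_d$ and $p\ge D_d$, so no case split by the size of $k$ is required here.

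\emph{Checking $p>2d$ and concluding.} Lemma~\ref{lem:pruning} also needs $p>2d$. This follows from the last assertion of Lemma~\ref{lem:interpolation}, namely $\Theta_d>D_d/2>2d$ for $d\ge4$, since $2k>\Theta_d>2d$. Lemma~\ref{lem:pruning} with $p=2k$ then gives \eqref{eq:pfixed}, which is exactly \eqref{eq:diagonalenergy}.

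The only point needing care is the pruning step inside Lemma~\ref{lem:pruning}. Its error terms are $Q^{2-p/d+p\eta}$, $Q^{1-p/d}$ and $N^{d-p/2+p\eta}$, and each must decay. This requires $p/d>2$ with $\eta$ small enough, which is why the margin $p>2d$ matters. Since that lemma is already established, the proof reduces to the verification above, and I expect no genuine obstacle.
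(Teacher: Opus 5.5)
Your proposal is correct and is essentially the paper's proof. The paper also notes that $2K(d)$ is the smallest even integer exceeding $\Theta_d$, so $p=2k>\Theta_d>2d$, and then applies Lemmas~\ref{lem:interpolation} and~\ref{lem:pruning} in the same way.
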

\begin{proof}
The integer $2K(d)$ is the smallest even integer strictly greater
than $\Theta_d$. Hence $p=2k>\Theta_d>2d$, so
Lemmas~\ref{lem:interpolation} and \ref{lem:pruning} apply.
\end{proof}

\begin{proposition}
\label{prop:thresholdcomparison}
For $d\ge5$,
\[
 K(d)\le\left\lceil\frac{d^2-2}{2}\right\rceil,
\]
and for every $d\ge4$,
\[
 K(d)\le\frac{d(d-1)}2+\left\lfloor\sqrt{2d+2}\right\rfloor.
\]
\end{proposition}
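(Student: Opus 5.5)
The plan is to bound $\Theta_d$ from above by evaluating $\theta_{d,j}$ at one well-chosen index $j$, and then convert this into a bound on $K(d)$. The conversion uses one elementary observation: for an integer $m$ and real $x$, $\lfloor x\rfloor+1\le m$ holds if and only if $x<m$. So $K(d)\le m$ is equivalent to $\Theta_d<2m$. Since $\Theta_d\le\theta_{d,j}$ for each admissible $j$, it suffices to exhibit one $j\in\{1,\dots,d-1\}$ with $\theta_{d,j}<2m$.

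For the general estimate, substitute $h=d+1-j$, so that $2\le h\le d$, and use the bound $a_j\le j(j+1)=(d+1-h)(d+2-h)$. Expanding gives
\[
 D_d-a_j\ge h(2d+3)-h^2-2(d+1).
\]
Since $\theta_{d,j}=D_d-(D_d-a_j)/h$, this yields
\[
 \theta_{d,j}\le d^2-d-3+h+\frac{2d+2}{h}.
\]

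\textbf{Second inequality.} Here $m=\frac{d(d-1)}2+s$ with $s=\lfloor\sqrt{2d+2}\rfloor$, so we need $\theta_{d,j}<d^2-d+2s$. Take $h=s$. For $d\ge4$ we have $s\ge3$, and clearly $s\le d$, so this $h$ is admissible. The requirement becomes $(2d+2)/s<s+3$, i.e.\ $2d+2<s^2+3s$. This holds because $2d+2<(s+1)^2=s^2+2s+1\le s^2+3s$.

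\textbf{First inequality.} Here $m=\lceil (d^2-2)/2\rceil$, and $2m\ge d^2-2$, so it suffices to show $\Theta_d<d^2-2$.
\begin{itemize}
\item For $d\ge9$, take $h=3$. The general bound gives $\theta\le d^2-d+(2d+2)/3$, and this is $<d^2-2$ exactly when $2d+8<3d$, i.e.\ $d>8$.
\item For $5\le d\le8$, the general bound is too weak. Instead compute $\theta_{d,j}$ directly, using $a_j=2^j$ where that is the smaller value:
\begin{itemize}
\item $d=5$, $j=3$: $\theta=30-22/3<23$.
\item $d=6$, $j=4$: $\theta=42-26/3<34$.
\item $d=7$, $j=4$: $\theta=56-40/4=46<47$.
\item $d=8$, $j=5$ (with $a_5=30$): $\theta=72-42/4=61.5<62$.
\end{itemize}
\end{itemize}

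The only real obstacle is this small-$d$ range. There one must keep the Hua exponent $2^j$ rather than $j(j+1)$ and pick $j$ by hand. The checks are finite arithmetic, but each must be verified carefully, since the margins are small (e.g.\ $46<47$).
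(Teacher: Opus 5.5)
Your proof is correct. For the second inequality you follow the paper: substitute $h=d+1-j$, bound $a_j\le j(j+1)$, and take $h=\lfloor\sqrt{2d+2}\rfloor$. You close with $2d+2<s^2+3s$, while the paper uses $2d+2\le h^2+2h$ to reach $\Theta_d\le d^2-d+2h-1$. You also make the floor step explicit via $K(d)\le m\iff\Theta_d<2m$, which the paper leaves implicit.

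For the first inequality your route differs. You use the general bound with $h=3$ (i.e.\ $j=d-2$) for $d\ge 9$ and check $5\le d\le 8$ by hand. The paper instead makes the single choice $j=3$, $a_3=8$, for every $d\ge5$:
\[
\Theta_d\le d(d+1)-\frac{d(d+1)-8}{d-2}=d^2-3+\frac{2}{d-2}<d^2-2 .
\]
This needs no case split. Your own $d=5$ check is exactly this choice, and it also works for $d=6,7,8$, giving $33.5$, $46.4$ and $61\tfrac{1}{3}$. Your route does show that for $d\ge9$ the first bound needs only the exponents $j(j+1)$, not Hua's $2^j$. The cost is a finite verification with thin margins, which the uniform choice $j=3$ avoids.

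One small correction: your remark that the general bound is too weak for $5\le d\le 8$ holds only with $h=3$. For $d=8$ the general bound with $h=4$ gives $61.5<62$, which is exactly your $j=5$ computation.
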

\begin{proof}
The choice $j=3$, with $a_3=8$, gives
\[
 \Theta_d\le d(d+1)-\frac{d(d+1)-8}{d-2}
 =d^2-3+\frac2{d-2}<d^2-2\qquad(d\ge5).
\]
This proves the first assertion. When $d=4$, $K(4)=8$.

For the second assertion write $h=d+1-j$. Since
$a_j\le j(j+1)$, expansion gives
\[
 \Theta_d\le D_d-\frac{D_d-j(j+1)}h
 =d^2-d-3+h+\frac{2d+2}{h}.
\]
Choose $h=\lfloor\sqrt{2d+2}\rfloor$, which lies in $[2,d]$.
Then $2d+2<(h+1)^2$ implies
$2d+2\le h^2+2h$. Hence
\[
 \Theta_d\le d^2-d+2h-1<d(d-1)+2h.
\]
This proves the second assertion.
\end{proof}

\begin{proof}[Proof of Theorem~\ref{thm:diagonal}]

For $\boldsymbol\varepsilon=(\varepsilon_1,\ldots,\varepsilon_k)
\in\{1,-1\}^k$, let $k_+$ and $k_-$ count the plus and minus signs in $\boldsymbol\varepsilon$, so
$k_++k_-=k$. For the polynomial
$P_{\boldsymbol\varepsilon}(\mathbf n)=\sum_{j=1}^k\varepsilon_jn_j^{d}$, we have
\[
 S_{P_{\boldsymbol\varepsilon},N}(\alpha)
 =\sum_{\mathbf n\in[N]^k}e(\alpha P_{\boldsymbol\varepsilon}(\mathbf n))=\prod_{j=1}^k\left(\sum_{n_j=1}^N
                         e(\varepsilon_j\alpha n_j^{d})\right).
\]

By
$W_{{d},N}(-\alpha)=\overline{W_{{d},N}(\alpha)}$, we obtain that
\begin{equation}\label{eq:signfactor}
 S_{P_{\boldsymbol\varepsilon},N}(\alpha)
 =W_{{d},N}(\alpha)^{k_+}\overline{W_{{d},N}(\alpha)}^{\,k_-},
 \qquad |S_{P_{\boldsymbol\varepsilon},N}|^2=|W_{{d},N}|^{2k}.
\end{equation}
For ${d}=3$, Corollary~\ref{cor:classicaldiagonal} proves
\eqref{eq:energy}; for ${d}\ge4$,
Proposition~\ref{prop:modernenergy} proves
\eqref{eq:energy}. Proposition~\ref{prop:transfer}
then gives  \eqref{eq:diagonalmain}.

 If ${d}=2$ and $k\ge3$, the diagonal
quadratic form has rank $k$, so Theorem~\ref{thm:Birch} applies. For ${d}=k=2$, by \cite[Theorem D]{DLMS}, \eqref{eq:diagonalmain} holds for $P_{\boldsymbol\varepsilon}(\mathbf n) = n_1^2+n_2^2$. By the following Proposition~\ref{prop:diffsquares}, it also holds for $P_{\boldsymbol\varepsilon}(\mathbf n) = n_1^2-n_2^2$. This completes the proof of Theorem~\ref{thm:diagonal}.
\end{proof}

\begin{proposition}\label{prop:diffsquares}
For every uniquely ergodic system $(X,\nu,T)$, every $x\in X$, and $f\in C(X)$,
\begin{equation}\label{eq_diffsquares}
 \lim_{N\to\infty}N^{-2}\sum_{m,n\le N}
 f(T^{\Omega(n^2-m^2)}x)=\av f.
\end{equation}
\end{proposition}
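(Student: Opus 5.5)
We use complete additivity of $\Omega$ to factor $n^2-m^2=(n-m)(n+m)$. This turns the two-variable average into an average of $f$ along $T^{\Omega(b)}$ started at the moving base points $T^{\Omega(a)}x$. The proof then reduces to a version of the Bergelson--Richter theorem (in progressions) that is uniform in the starting point.

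\emph{Reduction.} The diagonal $n=m$ contributes $O(N)$ terms, hence $O(N^{-1})$ after normalization. The terms with $m>n$ are handled like those with $n>m$, since $\Omega(t)=\Omega(|t|)$. For $n>m$ set $a=n-m$ and $b=n+m$. Then
\[
\Omega(n^2-m^2)=\Omega(a)+\Omega(b),
\]
and $(n,m)\mapsto(a,b)$ is a bijection onto the set
\[
\{(a,b):\ a\equiv b \pmod 2,\ 1\le a<b,\ a+b\le 2N,\ b-a\ge 2\}.
\]
Hence it suffices to show
\[
\frac1{N^2}\sum_{a\le N}\ \sum_{\substack{a<b\le 2N-a\\ b\equiv a\ (2)}} f\bigl(T^{\Omega(b)}y_a\bigr)\ \longrightarrow\ \frac14\int_X f\,d\nu,
\qquad y_a:=T^{\Omega(a)}x .
\]
The constant $\tfrac14$ is the normalized count of pairs in one half of the square, so with the symmetric half this gives $\int_X f\,d\nu$.

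\emph{Key step: uniformity in the base point.} We need, for each fixed residue $r \bmod 2$,
\[
\sup_{y\in X}\Bigl|\frac1M\sum_{\substack{b\le M\\ b\equiv r\,(2)}} f(T^{\Omega(b)}y)-\frac12\int_X f\,d\nu\Bigr|\to0 .
\]
Suppose this fails. Then there exist $M_i\to\infty$ and points $y_i$ such that the empirical measures
\[
\mu_i=\frac{2}{M_i}\sum_{\substack{b\le M_i\\ b\equiv r\,(2)}}\delta_{T^{\Omega(b)}y_i}
\]
converge weak-$*$ to some $\mu\ne\nu$. We show every such limit is $T$-invariant, which contradicts unique ergodicity. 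The invariance is the core of the Bergelson--Richter argument \cite{BR}, including their progression version \cite[Corollary 1.16]{BR}. They prove that
\[
\frac1M\sum_{b\le M,\ b\equiv r}g(T^{\Omega(b)}y)-\frac1M\sum_{b\le M,\ b\equiv r}g(T^{\Omega(b)+1}y)\to0
\]
with an error bounded by $\|g\|_\infty\,o(1)$ independently of $y$. That argument uses only multiplicative number theory (the $\Omega(pn)=\Omega(n)+1$ trick with a Turán--Kubilius type estimate) and never uses the point $y$.

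This step is the main obstacle. The plan is to reread their proof and check that every error term depends only on $\|g\|_\infty$ and on $M$. If it does, the uniform statement follows at once by this compactness argument.

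\emph{Assembling.} Fix $\varepsilon>0$. The values $a>(1-\varepsilon)N$ contribute at most $O(\varepsilon)$ after normalization. For $a\le(1-\varepsilon)N$, the $b$-range $(a,2N-a]$ has length at least $2\varepsilon N$. Write the sum over this range as a difference of the partial sums up to $2N-a$ and up to $a$. The uniform statement, applied at the points $y_a$, gives
\[
\frac{1}{2}(2N-2a)\int_X f\,d\nu+o_\varepsilon(N)
\]
uniformly in $a$; the partial sum up to $a$ is handled with the same uniform bound when $a$ is large, and trivially otherwise.

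Summing over $a\le N$ gives
\[
\sum_{a}(N-a)\int_X f\,d\nu+o(N^2)+O(\varepsilon N^2)=\frac{N^2}{2}\int_X f\,d\nu+o(N^2)+O(\varepsilon N^2),
\]
which matches the count $\#\{(n,m):n>m\}\sim N^2/2$. Letting $N\to\infty$ and then $\varepsilon\to0$ yields \eqref{eq_diffsquares}.
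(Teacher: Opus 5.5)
Your proof is essentially the paper's: the same factorization $n^2-m^2=(n-m)(n+m)$ with moving base points $T^{\Omega(n-m)}x$, reducing everything to parity-restricted Bergelson--Richter partial sums that are uniform in the base point. The paper obtains the uniform statement from \cite[Theorem 1.1]{Richter} and then gets the even class from $\sum_{b\le M,\,2\mid b}f(T^{\Omega(b)}y)=\sum_{n\le M/2}f(T^{\Omega(n)}(Ty))$ and the odd class by subtraction, so the step you flagged as the main obstacle needs neither a re-reading of \cite{BR} nor your compactness argument (one minor slip: the half-square target constant is $\tfrac12\int_X f\,d\nu$, not $\tfrac14\int_X f\,d\nu$, as your own assembling step correctly shows).
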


\begin{proof}

Let $f\in C(X)$. By \cite[Theorem 1.1]{Richter},  we get that \eqref{eq:BRintro}  holds uniformly for $x\in X$. That is,
\begin{equation}\label{eq:uniformBR}
 \lim_{M\to\infty}\sup_{y\in X}
 \Big|\frac1M\sum_{n\le M}f(T^{\Omega(n)}y)-\av f\Big|=0.
\end{equation}
Then \eqref{eq:uniformBR} holds for all even $n$ and hence for all odd $n$.  Set $h=f-\av f$, then  $\av h=0$. It follows that for $r=0,1$,
\begin{equation}\label{eq:parity}
 \sup_{y\in X}\max_{0\le L\le2N}
 \Big|\sum_{\substack{1\le v\le L\\v\equiv r\ (2)}}
 h(T^{\Omega(v)}y)\Big|=o(N).
\end{equation}

Now, we prove that
\[
 \lim_{N\to\infty}N^{-2}\sum_{m,n\le N}
 f(T^{\Omega(n^2-m^2)}x)=\av f.
\]

The diagonal $m=n$ contributes $O_f(N)$ to the sum. On $n>m$, put $u=n-m$, then
\begin{align*}
 \sum_{\substack{m,n\le N\\m\ne n}}
 h(T^{\Omega(|n^2-m^2|)}x)
 &=2\sum_{u=1}^{N-1}\sum_{m=1}^{N-u}
 h\bigl(T^{\Omega(u+2m)}(T^{\Omega(u)}x)\bigr).
\end{align*}
For $1\le u\le N-1$, write
\[
 S_{N,u}(x)
 =
 \sum_{m=1}^{N-u}
 h\!\left(T^{\Omega(u+2m)}
          \bigl(T^{\Omega(u)}x\bigr)\right),
 \qquad
 y_u=T^{\Omega(u)}x.
\]
Then
\[
 S_{N,u}(x)
 =
 \sum_{\substack{u<v\le2N-u\\v\equiv u\pmod2}}
 h(T^{\Omega(v)}y_u).
\]

For $r\in\{0,1\}$, define the parity-restricted partial sums
\[
 P_r(L,y)
 =
 \sum_{\substack{1\le v\le L\\v\equiv r\pmod2}}
 h(T^{\Omega(v)}y).
\]
Let $r_u\in\{0,1\}$ satisfy $r_u\equiv u\pmod2$. Then
\[
 S_{N,u}(x)
 =
 P_{r_u}(2N-u,y_u)-P_{r_u}(u,y_u).
\]

Set
\[
 R_N
 =
 \max_{r\in\{0,1\}}
 \sup_{y\in X}
 \max_{\substack{L\in\mathbb Z\\0\le L\le2N}}
 |P_r(L,y)|.
\]
By \eqref{eq:parity}, we have $R_N=o(N)$. By
\[
 \begin{aligned}
 |S_{N,u}(x)|
 &\le
 |P_{r_u}(2N-u,y_u)|+|P_{r_u}(u,y_u)| \le2R_N,
 \end{aligned}
\]
we obtain that
\[
 \begin{aligned}
 \left|2\sum_{u=1}^{N-1}S_{N,u}(x)\right|
 &\le2\sum_{u=1}^{N-1}|S_{N,u}(x)|\le4(N-1)R_N
 =o(N^2),
 \end{aligned}
\]
uniformly in $x$. Adding the diagonal contribution, which is
$O_f(N)$, proves \eqref{eq_diffsquares}.
\end{proof}

\section{Proof of Theorem~\ref{var:thm:main}}
\label{sec:varieties}

We will use the following absolute-value minor-arc estimate.  Its proof
combines the estimates in \cite[Sections 7 and 8]{BHB} with the tails of
the major-arc expansion.
\begin{lemma}\label{var:lem:absolute}
Let $R\ge1$, $d\ge2$, and let $\mathbf A=(A_1,\ldots,A_R)$ be degree-$d$ integer polynomials
in $m$ variables, and suppose
\[
 \B(\mathbf A_d)>R(R+1)C_d.
\]
For a fixed bounded box $\mathcal U\subset\R^m$, define
\[
 E_N(\boldsymbol\theta)=
 \sum_{\mathbf z\in N\mathcal U\cap\Z^m}
 e(\boldsymbol\theta\cdot\mathbf A(\mathbf z)).
\]
For an integer $Q\ge1$, let
\begin{equation}\label{var:eq:full-arcs}
 \M_N^{(R)}(Q)=
 \bigcup_{\substack{q\le Q,\ \mathbf a\bmod q\\
                    \gcd(q,a_1,\ldots,a_R)=1}}
 \left\{\boldsymbol\theta\in\T^R:
 \max_j\left\|\theta_j-\frac{a_j}{q}\right\|
 \le QN^{-d}\right\}.
\end{equation}
Then
\begin{equation}\label{var:eq:absolute-tightness}
 \lim_{Q\to\infty}\limsup_{N\to\infty}
 N^{Rd-m}\int_{\T^R\setminus\M_N^{(R)}(Q)}
 |E_N(\boldsymbol\theta)|\dd\boldsymbol\theta=0.
\end{equation}
\end{lemma}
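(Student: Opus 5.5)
We will prove \eqref{var:eq:absolute-tightness} by following the structure of Birch's argument as presented in \cite[Sections 7 and 8]{BHB}. We work with two families of major arcs: the fixed arcs $\M_N^{(R)}(Q)$ and a growing family $\M_N^{(R)}(N^{\vartheta})$, with $q\le N^{R\vartheta}$ and $\|\theta_j-a_j/q\|\le N^{-d+R\vartheta}$, where $\vartheta>0$ is small. We then split
\[
 \T^R\setminus\M_N^{(R)}(Q)
 =\bigl(\T^R\setminus\M_N^{(R)}(N^{\vartheta})\bigr)\cup
 \bigl(\M_N^{(R)}(N^{\vartheta})\setminus\M_N^{(R)}(Q)\bigr),
\]
which is legitimate since the fixed arcs lie inside the growing ones once $N$ is large.

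\textbf{Step 1 (Birch's minor arcs).} On the genuine minor arcs we use Birch's Weyl differencing. Write $K=\B(\mathbf A_d)/2^{d-1}$. For any $\boldsymbol\theta$ off $\M_N^{(R)}(N^{\vartheta})$, the differencing lemma \cite[Lemma 2.x]{Birch}, in the form used in \cite[Section 7]{BHB}, gives
\[
 |E_N(\boldsymbol\theta)|\ll N^{m-K\vartheta+\epsilon}.
\]
We then run the standard dyadic argument in $\vartheta$, which yields
\[
 \int_{\T^R\setminus\M_N^{(R)}(N^{\vartheta_0})}|E_N|\dd\boldsymbol\theta\ll N^{m-Rd-\delta}.
\]
The dyadic argument covers the complement by shells $\M(N^{\vartheta_{i+1}})\setminus\M(N^{\vartheta_i})$, each of measure $\ll N^{-Rd+(2R+1)R\vartheta_{i+1}}$, and bounds $|E_N|$ by $N^{m-K\vartheta_i+\epsilon}$ on each shell. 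The exponent is negative provided $K>R(R+1)(d-1)$ after choosing the steps finely. This is exactly the hypothesis $\B(\mathbf A_d)>R(R+1)(d-1)2^{d-1}$. Note that the argument already controls $|E_N|$ itself rather than $E_N$, so no cancellation is lost.

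\textbf{Step 2 (major-arc approximation).} On $\M_N^{(R)}(N^{\vartheta})$ with $\boldsymbol\theta=\mathbf a/q+\boldsymbol\beta$, we use the approximation \cite[Lemma 8.x]{BHB}:
\[
 E_N(\boldsymbol\theta)=N^mq^{-m}S_{\mathbf a,q}\,I(N^d\boldsymbol\beta)+O(N^{m-1+c\vartheta}),
\]
where $S_{\mathbf a,q}=\sum_{\mathbf z\bmod q}e(\mathbf a\cdot\mathbf A(\mathbf z)/q)$ and $I(\boldsymbol\gamma)=\int_{\mathcal U}e(\boldsymbol\gamma\cdot\mathbf A_d(\mathbf u))\dd\mathbf u$. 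The error term, integrated over the set of measure $\ll N^{-Rd+c'\vartheta}$, contributes $o(N^{m-Rd})$ for small $\vartheta$. The main term is controlled by Birch's bounds
\[
 q^{-m}|S_{\mathbf a,q}|\ll q^{-K/(R(d-1))+\epsilon},\qquad
 |I(\boldsymbol\gamma)|\ll\min\bigl(1,|\boldsymbol\gamma|^{-K/(R(d-1))+\epsilon}\bigr).
\]

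\textbf{Step 3 (tails).} Set $\kappa=K/(R(d-1))$, and note that the hypothesis gives $\kappa>R+1$. After normalizing by $N^{Rd-m}$ and substituting $\boldsymbol\gamma=N^d\boldsymbol\beta$, the part of $\M(N^\vartheta)\setminus\M(Q)$ with $q>Q$ contributes
\[
 \ll\sum_{q>Q}q^{R}q^{-\kappa+\epsilon}\int_{\R^R}\min(1,|\boldsymbol\gamma|^{-\kappa+\epsilon})\dd\boldsymbol\gamma\ll Q^{R+1-\kappa+\epsilon}.
\]
The part with $q\le Q$ but $|\boldsymbol\gamma|>Q$ contributes
\[
 \ll\sum_{q\le Q}q^{R-\kappa+\epsilon}\int_{|\boldsymbol\gamma|>Q}|\boldsymbol\gamma|^{-\kappa+\epsilon}\dd\boldsymbol\gamma\ll Q^{R-\kappa+\epsilon}.
\]
Both integrals converge because $\kappa>R+1>R$, and both bounds tend to $0$ as $Q\to\infty$. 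Combining these with Step 1 and letting $N\to\infty$ first and then $Q\to\infty$ proves \eqref{var:eq:absolute-tightness}.

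The main obstacle is Step 1. We must verify that Birch's minor-arc analysis really bounds $\int|E_N|$ with a power saving under exactly the stated rank condition. We must also verify that it does so uniformly when $\mathbf A$ is nonhomogeneous, since Weyl differencing only sees $\mathbf A_d$ and the lower-order terms are absorbed. Finally, the arcs need to overlap consistently so that the complement is covered. Step 3 is routine once the exponent $\kappa>R+1$ is secured.
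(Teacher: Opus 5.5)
Your proposal is essentially the paper's proof. The paper likewise takes a power saving for $\int|E_N|$ off arcs of size $N^{\eta}$ from \cite[Section 7]{BHB}, replaces $E_N$ on those arcs by $N^mq^{-m}S(q,\mathbf a)I(N^d\boldsymbol\beta)$ with a negligible integrated error, and bounds the tails by $Q^{R+1-\rho+\epsilon}+Q^{R-\rho+\epsilon}$; its $\rho=\B(\mathbf A_d)/(RC_d)$ is exactly your $\kappa$.

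The one slip is in your sketch of Step 1, and it is repaired by using Birch's normalization, which is what your citation already supplies. As written, your arcs have $q\le N^{R\vartheta}$ and a $q$-independent width $N^{-d+R\vartheta}$. Off such arcs, Birch's lemma only gives $|E_N|\ll N^{m-K\vartheta/(d-1)+\epsilon}$. Combined with their measure $N^{-Rd+(2R+1)R\vartheta}$, the shell argument would then need $K>R(2R+1)(d-1)$, which is stronger than the hypothesis. With Birch's normalization, $q\le N^{R(d-1)\vartheta}$ and $|q\theta_j-a_j|\le N^{-d+R(d-1)\vartheta}$, the arcs have measure $\ll N^{-Rd+R(R+1)(d-1)\vartheta}$. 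This gives exactly the threshold $K>R(R+1)(d-1)$.
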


\begin{proof}
Write $B_0=\B(\mathbf A_d)$ and put
\[
 \rho=\frac{B_0}{R C_d}>R+1,
 \qquad \eta=\frac{1}{2R+4},
 \qquad L=N^\eta.
\]
Let $\widetilde\M_N$ denote the union in
\eqref{var:eq:full-arcs} with the real parameter $L$ in place of $Q$.
The argument of \cite[Section 7]{BHB}, specialized to $R$ equations of
the same degree $d$, bounds the integral of the absolute value and gives
some $\delta>0$ such that
\[
 \int_{\T^R\setminus\widetilde\M_N}
 |E_N(\boldsymbol\theta)|\dd\boldsymbol\theta
 \ll N^{m-Rd-\delta}.
\]
The extension to polynomials with lower-degree terms is recorded after
Theorem 1.2 of \cite{BHB}.

Define
\[
 S(q,\mathbf a)=
 \sum_{\mathbf z\bmod q}e\bigl(\mathbf a\cdot\mathbf A(\mathbf z)/q\bigr),
 \qquad
 I(\boldsymbol\gamma)=
 \int_{\mathcal U}e\bigl(\boldsymbol\gamma\cdot\mathbf A_d(\mathbf u)\bigr)
 \dd\mathbf u.
\]
In the equal-degree case, the parameter $s_d$ of \cite{BHB} is
$R C_d/B_0=1/\rho$.  Thus \cite[Lemmas 8.2 and 8.3]{BHB} give, for
$\gcd(q,a_1,\ldots,a_R)=1$ and any $\epsilon>0$,
\[
 q^{-m}|S(q,\mathbf a)|\ll_\epsilon q^{-\rho+\epsilon},
 \qquad
 |I(\boldsymbol\gamma)|
 \ll_\epsilon (1+\|\boldsymbol\gamma\|_\infty)^{-\rho+\epsilon}.
\]
All constants here and below may depend on the fixed system and box,
but are independent of $N$, $q$, $\mathbf a$, and $Q$.

For $\boldsymbol\theta=\mathbf a/q+\boldsymbol\beta$ in one of the
enlarged arcs, put $\boldsymbol\gamma=N^d\boldsymbol\beta$.
Splitting into residue classes modulo $q$ and comparing each resulting
sum with its integral gives
\begin{align*}
 E_N(\mathbf a/q+\boldsymbol\beta)
 &=q^{-m}S(q,\mathbf a)
   \int_{N\mathcal U}e\bigl(\boldsymbol\beta\cdot\mathbf A(\mathbf t)\bigr)
   \dd\mathbf t
   +O\bigl(qN^{m-1}(1+\|\boldsymbol\gamma\|_\infty)\bigr)\\
 &=N^m q^{-m}S(q,\mathbf a)I(\boldsymbol\gamma)
   +O\bigl(qN^{m-1}(1+\|\boldsymbol\gamma\|_\infty)\bigr).
\end{align*}
Indeed, uniformly for $\mathbf u\in\mathcal U$,
$N^{-d}\mathbf A(N\mathbf u)=\mathbf A_d(\mathbf u)+O(N^{-1})$.
The resulting change in the integral is
$O(N^{m-1}\|\boldsymbol\gamma\|_\infty)$, which is absorbed by the
displayed error because $q^{-m}|S(q,\mathbf a)|\le1$.
There are $O(L^{R+1})$ enlarged arcs, each of measure
$O(L^R N^{-Rd})$.  Consequently the integrated error, after
multiplication by $N^{Rd-m}$, is
\[
 O\bigl(N^{-1}L^{2R+3}\bigr)=O(N^{-\eta}).
\]

Fix $0<\epsilon<\rho-R-1$.  For fixed $Q$ and large $N$, a point of
$\widetilde\M_N\setminus\M_N^{(R)}(Q)$ lies in an enlarged arc for
which either $q>Q$ or $\|\boldsymbol\gamma\|_\infty>Q$.
The preceding expansion and the change of variables
$\boldsymbol\gamma=N^d\boldsymbol\beta$ therefore imply
\begin{align*}
 &\quad \limsup_{N\to\infty}N^{Rd-m}
 \int_{\T^R\setminus\M_N^{(R)}(Q)}
 |E_N(\boldsymbol\theta)|\dd\boldsymbol\theta\\
 &\ll_\epsilon
 \left(\sum_{q>Q}q^{R-\rho+\epsilon}\right)
 \int_{\R^R}(1+\|\boldsymbol\gamma\|_\infty)^{-\rho+\epsilon}
 \dd\boldsymbol\gamma\\
 &\qquad+
 \left(\sum_{q\ge1}q^{R-\rho+\epsilon}\right)
 \int_{\|\boldsymbol\gamma\|_\infty>Q}
 (1+\|\boldsymbol\gamma\|_\infty)^{-\rho+\epsilon}
 \dd\boldsymbol\gamma\\
 &\ll_\epsilon
 Q^{R+1-\rho+\epsilon}+Q^{R-\rho+\epsilon}.
\end{align*}
Both terms tend to zero as $Q\to\infty$, proving
\eqref{var:eq:absolute-tightness}.
\end{proof}

\begin{lemma}\label{var:lem:doubled-rank}
With the notation of Theorem~\ref{var:thm:main}, define
\[
 \mathbf Q(\mathbf u,\mathbf v)=
 (\mathbf F(\mathbf u),\mathbf F(\mathbf v),
   P(\mathbf u)-P(\mathbf v)).
\]
This is a system of $2r+1$ polynomials in $2n$ variables, and
\begin{equation}\label{var:eq:doubled-rank}
 \B(\mathbf Q_d)
 =\min\{\B(\mathbf G),\,2\B(\mathbf G,H)\}.
\end{equation}
\end{lemma}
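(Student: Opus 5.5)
The plan is to compute $\Crit(\mathbf Q_d)$ explicitly as a union of products of the critical loci of $\mathbf G$ and $(\mathbf G,H)$, and then read off its dimension. First, the top-degree parts of $\mathbf Q$ are $\mathbf Q_d(\mathbf u,\mathbf v)=(\mathbf G(\mathbf u),\mathbf G(\mathbf v),H(\mathbf u)-H(\mathbf v))$, because $\mathbf F(\mathbf u)$ and $P(\mathbf u)-P(\mathbf v)$ have degree exactly $d$ with the stated leading forms. Its Jacobian is the $(2r+1)\times 2n$ block matrix
\[
 J_{\mathbf Q_d}(\mathbf u,\mathbf v)=
 \begin{pmatrix}
 J_{\mathbf G}(\mathbf u)&0\\
 0&J_{\mathbf G}(\mathbf v)\\
 \nabla H(\mathbf u)&-\nabla H(\mathbf v)
 \end{pmatrix}.
\]
This matrix has rank $<2r+1$ exactly when there is a nonzero vector $(\boldsymbol\lambda,\boldsymbol\mu,c)\in\C^r\times\C^r\times\C$ with
\[
\boldsymbol\lambda J_{\mathbf G}(\mathbf u)+c\nabla H(\mathbf u)=0
\quad\text{and}\quad
\boldsymbol\mu J_{\mathbf G}(\mathbf v)-c\nabla H(\mathbf v)=0 .
\]

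I will split into two cases according to whether $c$ vanishes.
\begin{itemize}
\item If $c=0$, then $\boldsymbol\lambda\ne0$ or $\boldsymbol\mu\ne0$. This means $\mathbf u\in\Crit(\mathbf G)$ or $\mathbf v\in\Crit(\mathbf G)$.
\item If $c\ne0$, then the vector $(\boldsymbol\lambda,c)$ is nonzero and kills the rows of $J_{(\mathbf G,H)}(\mathbf u)$. Similarly $(\boldsymbol\mu,-c)$ kills the rows of $J_{(\mathbf G,H)}(\mathbf v)$. So both $\mathbf u$ and $\mathbf v$ lie in $\Crit(\mathbf G,H)$.
\end{itemize}
Conversely, take $\mathbf u,\mathbf v\in\Crit(\mathbf G,H)$ with dependency coefficients $c_1$ and $c_2$ on the $\nabla H$ rows. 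If $c_1=0$ or $c_2=0$, the corresponding point already lies in $\Crit(\mathbf G)$, which is covered by the first case. If both are nonzero, rescale the two relations so that their $\nabla H$-coefficients are $c$ and $-c$ for a common $c\ne0$, and combine them. This gives
\[
\Crit(\mathbf Q_d)=\bigl(\Crit(\mathbf G)\times\C^n\bigr)\cup\bigl(\C^n\times\Crit(\mathbf G)\bigr)\cup\bigl(\Crit(\mathbf G,H)\times\Crit(\mathbf G,H)\bigr).
\]

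Next I take dimensions, using that the dimension of a finite union of varieties is the maximum of the dimensions and the dimension of a product is the sum. This yields
\[
\dim\Crit(\mathbf Q_d)=\max\{n+\dim\Crit(\mathbf G),\,2\dim\Crit(\mathbf G,H)\}.
\]
Subtracting from $2n$ gives $\B(\mathbf Q_d)=\min\{\B(\mathbf G),2\B(\mathbf G,H)\}$, which is \eqref{var:eq:doubled-rank}.

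The only delicate points are bookkeeping. The first is the degenerate case $r=0$. There $\Crit(\varnothing)=\varnothing$ is consistent with $\B(\varnothing)=+\infty$, and the formula reduces to the doubling \eqref{eq:rankdoubling}. The second is the converse inclusion when one of the dependency coefficients on $\nabla H$ vanishes. That case is exactly where the inclusion $\Crit(\mathbf G)\subset\Crit(\mathbf G,H)$ is used to route it into the first two pieces of the union.
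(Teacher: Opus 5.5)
Your proposal is correct and is essentially the paper's proof: both read off from the block Jacobian that $\Crit(\mathbf Q_d)=(\Crit(\mathbf G)\times\C^n)\cup(\C^n\times\Crit(\mathbf G))\cup(\Crit(\mathbf G,H)\times\Crit(\mathbf G,H))$ and then take codimensions. The only difference is the case split: you split on whether the $\nabla H$-coefficient $c$ of a left-kernel vector vanishes, with an explicit rescaling for the converse, whereas the paper splits on whether the two $J_{\mathbf G}$ blocks have full row rank. One small correction to your closing remark: when $c_1=0$, what you actually use is that the relation then forces $\mathbf u\in\Crit(\mathbf G)$, not the inclusion $\Crit(\mathbf G)\subset\Crit(\mathbf G,H)$.
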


\begin{proof}
Write $S=\Crit(\mathbf G)$ and $S'=\Crit(\mathbf G,H)$. The
Jacobian of $\mathbf Q_d$ has block form
\[
 \begin{pmatrix}
 J_{\mathbf G}(\mathbf u)&0\\
 0&J_{\mathbf G}(\mathbf v)\\
 \nabla H(\mathbf u)&-\nabla H(\mathbf v)
 \end{pmatrix}.
\]
If one of the first two blocks has deficient row rank, this matrix
has deficient row rank. If both blocks have full row rank, the matrix
has deficient row rank precisely when $\nabla H$ belongs to the
corresponding row span at both points. Thus
\begin{equation}\label{var:eq:critical-union}
 \Crit(\mathbf Q_d)
 =(S\times\C^n)\cup(\C^n\times S)\cup(S'\times S').
\end{equation}
Taking codimensions gives \eqref{var:eq:doubled-rank}. For $r=0$
the first two sets are empty and this is the rank-doubling
identity \eqref{eq:rankdoubling} for $H(\mathbf u)-H(\mathbf v)$.
\end{proof}

\begin{proposition}\label{var:prop:energy}
Under the rank hypothesis of Theorem~\ref{var:thm:main}, put
\[
 S_{V,P,N}(\alpha)=\sum_{\mathbf z\in V_N}e(\alpha P(\mathbf z)),
 \qquad h=n-rd.
\]
For the scalar arcs \eqref{eq:fixedarcs},  one has
\begin{equation}\label{var:eq:constrained-energy}
 \lim_{Q\to\infty}\limsup_{N\to\infty}
 N^{d-2h}\int_{\m_N(Q)}|S_{V,P,N}(\alpha)|^2\dd\alpha=0.
\end{equation}
\end{proposition}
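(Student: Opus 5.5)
The plan is to reduce \eqref{var:eq:constrained-energy} to the absolute-value minor-arc estimate of Lemma~\ref{var:lem:absolute}, applied to the doubled system $\mathbf Q$ of Lemma~\ref{var:lem:doubled-rank}, with $R=2r+1$ equations in $m=2n$ variables and box $\mathcal U=\mathcal R\times\mathcal R$.

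\textbf{Step 1: write the energy as an integral over $\T^{2r+1}$.} Detect the constraint $\mathbf F(\mathbf z)=0$ by orthogonality:
\[
 S_{V,P,N}(\alpha)=\int_{\T^r}\sum_{\mathbf z\in N\mathcal R\cap\Z^n}e\bigl(\boldsymbol\theta\cdot\mathbf F(\mathbf z)+\alpha P(\mathbf z)\bigr)\dd\boldsymbol\theta .
\]
Multiplying this by its complex conjugate, and replacing $\boldsymbol\theta'$ by $-\boldsymbol\theta'$ in the conjugate factor, gives
\[
 |S_{V,P,N}(\alpha)|^2=\int_{\T^{2r}}E_N(\boldsymbol\theta,\boldsymbol\theta',\alpha)\dd\boldsymbol\theta\dd\boldsymbol\theta'.
\]
Here
\[
 E_N(\boldsymbol\theta,\boldsymbol\theta',\alpha)=\sum_{(\mathbf u,\mathbf v)\in N\mathcal U\cap\Z^{2n}}e\bigl(\boldsymbol\theta\cdot\mathbf F(\mathbf u)+\boldsymbol\theta'\cdot\mathbf F(\mathbf v)+\alpha(P(\mathbf u)-P(\mathbf v))\bigr),
\]
which is exactly the sum $E_N$ of Lemma~\ref{var:lem:absolute} for the system $\mathbf Q$. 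Integrating over $\alpha\in\m_N(Q)$ and taking absolute values yields
\[
 \int_{\m_N(Q)}|S_{V,P,N}|^2\dd\alpha\le\int_{\{(\boldsymbol\theta,\boldsymbol\theta',\alpha)\in\T^{2r+1}:\ \alpha\in\m_N(Q)\}}|E_N|.
\]
The normalizations match, since $N^{Rd-m}=N^{(2r+1)d-2n}=N^{d-2h}$.

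\textbf{Step 2: compare arcs.} I claim that if $(\boldsymbol\theta,\boldsymbol\theta',\alpha)\in\M_N^{(2r+1)}(Q)$, then $\alpha\in\M_N(Q)$. Indeed, membership gives some $q\le Q$ and some integer $a$ with $\|\alpha-a/q\|\le QN^{-d}$. The $\gcd$ condition in \eqref{var:eq:full-arcs} involves all $2r+1$ numerators, so $a/q$ need not be in lowest terms. Reducing it to $a'/q'$ with $q'\mid q$ and $(a',q')=1$ (and $a'=0,\ q'=1$ when $a\equiv0$) only lowers the denominator and keeps the same point, so $\alpha\in\M_N(Q)$ by \eqref{eq:fixedarcs}. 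By contraposition, the region of integration in Step 1 lies inside $\T^{2r+1}\setminus\M_N^{(2r+1)}(Q)$.

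\textbf{Step 3: verify the rank hypothesis and conclude.} Lemma~\ref{var:lem:doubled-rank} gives $\B(\mathbf Q_d)=\min\{\B(\mathbf G),2\B(\mathbf G,H)\}$. By \eqref{var:eq:main-rank} this exceeds
\[
 K_{r,d}=(2r+1)(2r+2)C_d=R(R+1)C_d .
\]
This is precisely the hypothesis of Lemma~\ref{var:lem:absolute}, since $\mathbf Q$ consists of $2r+1$ integer polynomials of common degree $d$. Therefore
\[
 \limsup_{N\to\infty}N^{d-2h}\int_{\m_N(Q)}|S_{V,P,N}|^2\dd\alpha\le\limsup_{N\to\infty}N^{Rd-m}\int_{\T^R\setminus\M_N^{(R)}(Q)}|E_N|\dd\boldsymbol\theta ,
\]
and the right side tends to $0$ as $Q\to\infty$ by \eqref{var:eq:absolute-tightness}.

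I expect no serious obstacle. The point that needs care is Step 2: the full-system arcs use a joint $\gcd$ normalization, whereas the scalar arcs use reduced fractions, and the reduction argument above is what reconciles them. A secondary check is that the polynomial $P(\mathbf u)-P(\mathbf v)$ is admissible in Lemma~\ref{var:lem:absolute}: it has degree exactly $d$ in the relevant sense, with top-degree part $H(\mathbf u)-H(\mathbf v)$. Lower-order terms are explicitly allowed there.
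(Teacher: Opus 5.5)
Your proposal is correct and follows the paper's proof essentially step for step. Both arguments write $|S_{V,P,N}(\alpha)|^2$ by orthogonality as an integral over $\T^{2r}$ of the exponential sum of the doubled system $\mathbf Q$. Both then observe that a point of $\M_N^{(2r+1)}(Q)$ has its last coordinate in $\M_N(Q)$, and conclude via Lemma~\ref{var:lem:doubled-rank} and Lemma~\ref{var:lem:absolute} with $R=2r+1$, $m=2n$. Your Step~2 reduction of $a/q$ to lowest terms supplies a detail that the paper states without comment.
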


\begin{proof}
Let
\[
\mathbf Q(\mathbf u,\mathbf v)
=
\bigl(
\mathbf F(\mathbf u),\mathbf F(\mathbf v),
P(\mathbf u)-P(\mathbf v)
\bigr).
\]
For $\boldsymbol\beta,\boldsymbol\gamma\in\mathbb T^r$
and $\alpha\in\mathbb T$, define
\[
E_{\mathbf Q,N}
(\boldsymbol\beta,\boldsymbol\gamma,\alpha)
=
\sum_{\mathbf u,\mathbf v\in N\mathcal R\cap\mathbb Z^n}
e\!\left(
\boldsymbol\beta\cdot\mathbf F(\mathbf u)
+\boldsymbol\gamma\cdot\mathbf F(\mathbf v)
+\alpha\bigl(P(\mathbf u)-P(\mathbf v)\bigr)
\right).
\]
Then for every $\alpha\in\T$,
\begin{equation}\label{var:eq:orthogonality}
 |S_{V,P,N}(\alpha)|^2
 =\int_{\T^{2r}}
 E_{\mathbf Q,N}(\boldsymbol\beta,\boldsymbol\gamma,\alpha)
 \dd\boldsymbol\beta\dd\boldsymbol\gamma.
\end{equation}
If a vector belongs to $\M_N^{(2r+1)}(Q)$, its last coordinate
belongs to $\M_N(Q)$. Therefore
\[
 \T^{2r}\times\m_N(Q)
 \subseteq\T^{2r+1}\setminus\M_N^{(2r+1)}(Q).
\]
Integrating \eqref{var:eq:orthogonality} and applying the triangle
inequality yields
\begin{equation}\label{var:eq:energy-comparison}
 \int_{\m_N(Q)}|S_{V,P,N}(\alpha)|^2\dd\alpha
 \le\int_{\T^{2r+1}\setminus\M_N^{(2r+1)}(Q)}
 |E_{\mathbf Q,N}(\boldsymbol\theta)|\dd\boldsymbol\theta.
\end{equation}
Use Lemma~\ref{var:lem:doubled-rank} and apply Lemma~\ref{var:lem:absolute} with $m=2n$ and $R=2r+1$. Since
$Rd-m=(2r+1)d-2n=d-2h$, this proves
\eqref{var:eq:constrained-energy}.
\end{proof}

\begin{proof}[Proof of Theorem~\ref{var:thm:main}]
Similar to Proposition \ref{prop:transfer}, we establish the more general conclusion
\begin{equation}\label{var:eq:rational-transfer}
 \lim_{N\to\infty} \frac1{|V_N|}\sum_{\mathbf z\in V_N}b(P(\mathbf z))=0
\end{equation}
for every bounded $b:\Z\to\C$ having vanishing rational means in
the sense of Definition~\ref{def:null}. Then Theorem~\ref{var:thm:main} follows by Corollary~\ref{lem:arithmetic}.

Choose a fixed integer $C\ge1$ with $|P(\mathbf z)|\le CN^d$ on
$N\mathcal R$, and set
\[
 B_N(\alpha)=\sum_{|t|\le CN^d}b(t)e(-t\alpha).
\]
Then by Parseval's formula, we have
\begin{equation}\label{var:eq:transfer-fourier}
 \sum_{\mathbf z\in V_N}b(P(\mathbf z))
 =\int_{\T}S_{V,P,N}(\alpha)B_N(\alpha)\dd\alpha,
 \qquad \|B_N\|_2\ll_b N^{d/2}.
\end{equation}
For every fixed $Q$, vanishing rational means and partial summation,
exactly as in \eqref{eq:maximalpartial}--\eqref{eq:Bmajor}, imply
\[
 \sup_{\alpha\in\M_N(Q)}|B_N(\alpha)|=o_Q(N^d).
\]
The estimates $|S_{V,P,N}|\le|V_N|$ and
$|\M_N(Q)|\ll_QN^{-d}$ show that the  major-arc
contribution in \eqref{var:eq:transfer-fourier}, after division by $|V_N|$, is $o_Q(1)$.
For the minor arcs, Cauchy--Schwarz and
\eqref{var:eq:base-count} give
\begin{align*}
 \frac1{|V_N|}\left|
 \int_{\m_N(Q)}S_{V,P,N}(\alpha)B_N(\alpha)\dd\alpha\right|
 &\ll_b
 \left(N^{d-2h}\int_{\m_N(Q)}|S_{V,P,N}(\alpha)|^2\dd\alpha
 \right)^{1/2}.
\end{align*}
Then Proposition~\ref{var:prop:energy}  implies
\eqref{var:eq:rational-transfer}, by first taking 
$N\to\infty$ and then letting $Q\to\infty$.
\end{proof}

\section{Proof of Theorem~\ref{diagvar:thm:main}}
\label{sec:diagonal-varieties}

Recall that $W_{d,N}(\alpha)=\sum_{t=1}^N e(\alpha t^d)$,
$d\ge2$, and the major arcs $\M_N(Q)$ and the minor arcs $ \m_N(Q)$ are defined by \eqref{eq:fixedarcs}.  Put $s=2K_*(d)$ and
\[
 M_s(N)=\int_{\T}|W_{d,N}(\alpha)|^s\dd\alpha,
 \qquad
 T_s(N,Q)=\int_{\m_N(Q)}|W_{d,N}(\alpha)|^s\dd\alpha.
\]
The estimates needed below are
\begin{equation}\label{diagvar:eq:scalar-input}
 M_s(N)\ll N^{s-d},\qquad
 \lim_{Q\to\infty}\limsup_{N\to\infty}N^{d-s}T_s(N,Q)=0.
\end{equation}
For $d=3$ the second assertion follows from
Corollary~\ref{cor:classicaldiagonal}, and for $d\ge4$ it follows
from Proposition~\ref{prop:modernenergy}. For $d=2$, use
Proposition~\ref{prop:Birchenergy} with
$P(t_1,t_2,t_3)=t_1^2+t_2^2+t_3^2$, whose Birch rank is $3>2$,
and $|S_{P,N}(\alpha)|^2=|W_{2,N}(\alpha)|^6$.
In all cases, the first assertion follows from the second: for one
sufficiently large fixed $Q$, the minor contribution is
$O(N^{s-d})$, while the trivial bound $|W_{d,N}|\le N$ and
$|\M_N(Q)|\ll_QN^{-d}$ give the same bound for the major contribution.

\begin{lemma}\label{diagvar:lem:blocks}
Let $C$ be an integer $R\times L$ matrix, with columns
$\mathbf c_1,\ldots,\mathbf c_L$. Suppose there are $s$ pairwise
disjoint sets $J_1,\ldots,J_s$ of column indices, each of size $R$,
such that every square matrix $C_{J_b}$ is invertible over $\Q$, $1\leq b\le s$.
Define
\[
 E_{C,N}(\boldsymbol\theta)
 =\prod_{j=1}^L W_{d,N}(\mathbf c_j\cdot\boldsymbol\theta).
\]
Then
\begin{equation}\label{diagvar:eq:full-lone}
 \int_{\T^R}|E_{C,N}(\boldsymbol\theta)|\dd\boldsymbol\theta
 \ll_C N^{L-Rd},
\end{equation}
and, for the full fixed arcs \eqref{var:eq:full-arcs},
\begin{equation}\label{diagvar:eq:joint-tightness}
 \lim_{Q\to\infty}\limsup_{N\to\infty}
 N^{Rd-L}\int_{\T^R\setminus\M_N^{(R)}(Q)}
 |E_{C,N}(\boldsymbol\theta)|\dd\boldsymbol\theta=0.
\end{equation}
\end{lemma}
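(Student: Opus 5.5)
The plan is to reduce both estimates to the one-dimensional inputs \eqref{diagvar:eq:scalar-input}, with $s=2K_*(d)$ the number of blocks, by combining H\"older's inequality with the linear changes of variables $\boldsymbol\varphi=C_{J_b}\boldsymbol\theta$. Write
\[
 G_b(\boldsymbol\theta)=\prod_{j\in J_b}|W_{d,N}(\mathbf c_j\cdot\boldsymbol\theta)|.
\]
Columns outside $J_1\cup\cdots\cup J_s$ are bounded trivially by $|W_{d,N}|\le N$. This gives
\[
 |E_{C,N}(\boldsymbol\theta)|\le N^{L-sR}\prod_{b=1}^sG_b(\boldsymbol\theta).
\]
H\"older's inequality with $s$ factors, each with exponent $s$, then gives
\[
 \int_{\T^R}|E_{C,N}|\le N^{L-sR}\prod_{b=1}^s\Bigl(\int_{\T^R}G_b^s\Bigr)^{1/s}.
\]

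For the first estimate, the key observation concerns the map $\boldsymbol\theta\mapsto C_{J_b}\boldsymbol\theta$ on $\T^R$. Since $C_{J_b}$ is an integer matrix with nonzero determinant, this map is a surjective group endomorphism of $\T^R$. It is therefore a $|\det C_{J_b}|$-to-one covering that pushes Haar measure forward to Haar measure. Hence
\[
 \int_{\T^R}G_b^s\dd\boldsymbol\theta=\int_{\T^R}\prod_{j=1}^R|W_{d,N}(\varphi_j)|^s\dd\boldsymbol\varphi=M_s(N)^R\ll N^{R(s-d)}.
\]
Multiplying over the blocks gives $N^{L-sR}\cdot N^{R(s-d)}=N^{L-Rd}$, which is \eqref{diagvar:eq:full-lone}.

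For the tightness statement \eqref{diagvar:eq:joint-tightness}, I first prove a transfer of arcs. There is a constant $c=c(C)$ with the following property: if $\boldsymbol\theta\notin\M_N^{(R)}(Q)$, then for $Q'=\lfloor cQ^{1/R}\rfloor$ some coordinate $\varphi_j=\mathbf c_j\cdot\boldsymbol\theta$ with $j\in J_1$ lies in $\m_N(Q')$. To see this, argue by contradiction. Suppose every $\varphi_j\in\M_N(Q')$, say $\varphi_j=a_j/q_j+\beta_j$. Put $q_0=\operatorname{lcm}(q_j)\le Q'^R$ and $D=\det C_{J_1}$, and write $C_{J_1}^{-1}=D^{-1}\operatorname{adj}C_{J_1}$. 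Then $\boldsymbol\theta\equiv\mathbf a'/(Dq_0)+C_{J_1}^{-1}\boldsymbol\beta\pmod{\Z^R}$. The denominator $Dq_0$ may be reduced so that the gcd condition in \eqref{var:eq:full-arcs} holds. The perturbation satisfies $\|C_{J_1}^{-1}\boldsymbol\beta\|_\infty\ll_C Q'N^{-d}$. Both quantities are at most $Q$ once $c$ is small, which gives the contradiction.

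It follows that $\T^R\setminus\M_N^{(R)}(Q)$ is contained in the union over $j_0\in J_1$ of the preimages of $\{\varphi_{j_0}\in\m_N(Q')\}$. I apply the same H\"older argument, but restrict only the first block's integral to this set. By the same measure-preserving change of variables, that integral is at most
\[
 \sum_{j_0\in J_1}T_s(N,Q')M_s(N)^{R-1}.
\]
The final bound is therefore
\[
 N^{Rd-L}\int_{\T^R\setminus\M_N^{(R)}(Q)}|E_{C,N}|\ll_C\bigl(N^{d-s}T_s(N,Q')\bigr)^{1/s}.
\]
Letting $N\to\infty$ and then $Q\to\infty$, so that $Q'\to\infty$, this tends to $0$ by \eqref{diagvar:eq:scalar-input}.

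The step needing most care is the arc transfer in the previous paragraph. One has to get the denominator bound $\ll_C Q'^R$ and the width bound $\ll_C Q'N^{-d}$ uniformly, and check the gcd normalization. The rest is routine H\"older and Haar-measure bookkeeping.
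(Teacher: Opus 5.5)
Your proof is correct and follows the paper's argument. Both bound the unselected columns trivially, apply H\"older over the $s$ blocks, and use the Haar-measure-preserving endomorphism $\boldsymbol\theta\mapsto C_{J_b}^{\mathsf T}\boldsymbol\theta$ to get $M_s(N)^R$; both then transfer arcs, showing that if all coordinates of a block lie in $\M_N(Q')$ then $\boldsymbol\theta$ lies in a joint major arc with parameter $\ll_C Q'^R$ (the paper's $\Psi(Q)$). The only differences are cosmetic: you restrict just one block to the minor set, obtaining $(N^{d-s}T_s(N,Q'))^{1/s}$ rather than the paper's $N^{d-s}T_s(N,Q)$, and you actually carry out the arc transfer (lcm of denominators, adjugate, gcd reduction), which the paper only asserts.
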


\begin{proof}
For $1\le b\le s$, set
\[
 U_b(\boldsymbol\theta)
 =\prod_{j\in J_b}|W_{d,N}(\mathbf c_j\cdot\boldsymbol\theta)|.
\]
The integer matrix $C_{J_b}^{\mathsf T}$ induces a surjective
endomorphism of $\T^R$ and hence preserves normalized Haar measure.
In particular, its determinant need not equal $\pm1$. Therefore
\begin{equation}\label{diagvar:eq:block-moment}
 \int_{\T^R}U_b(\boldsymbol\theta)^s\dd\boldsymbol\theta
 =M_s(N)^R.
\end{equation}
Bound the $L-sR$ unselected Weyl factors by $N$. H\"older's
inequality with $s$ factors, followed by
\eqref{diagvar:eq:block-moment} and \eqref{diagvar:eq:scalar-input},
proves \eqref{diagvar:eq:full-lone}.

Fix a block $J_b=\{j_1,\ldots,j_R\}$ and define
\[
 L_b:\mathbb T^R\longrightarrow\mathbb T^R,\qquad
 L_b(\boldsymbol\theta)=C_{J_b}^{\,T}\boldsymbol\theta
 \pmod{\mathbb Z^R}.
\]
Then
\[
 U_b(\boldsymbol\theta)^s
 =\prod_{k=1}^R
 \left|W_{d,N}\bigl((L_b\boldsymbol\theta)_k\bigr)\right|^s.
\]

Notice that there is an increasing function $\Psi(Q)\ll_C(1+Q)^R$ with the
following property for every block: if all the $R$  arguments
$\mathbf c_j\cdot\boldsymbol\theta$ belong to $\M_N(Q)$, then
$\boldsymbol\theta\in\M_N^{(R)}(\Psi(Q))$. In $D_N(Q)=\T^R\setminus\M_N^{(R)}(\Psi(Q))$, every point  has at least one coordinate of $L_b\boldsymbol\theta$ in
$\mathfrak m_N(Q)$. Thus
\[
 \mathbf 1_{D_N(Q)}(\boldsymbol\theta)
 \le
 \sum_{\ell=1}^R
 \mathbf 1_{\mathfrak m_N(Q)}
 \bigl((L_b\boldsymbol\theta)_\ell\bigr).
\]

It follows that 
\begin{align*}
 \int_{D_N(Q)}U_b(\boldsymbol\theta)^s
 \,\dd\boldsymbol\theta
 &\le
 \sum_{\ell=1}^R
 \int_{\mathbb T^R}
 \mathbf 1_{\mathfrak m_N(Q)}
 \bigl((L_b\boldsymbol\theta)_\ell\bigr)
 \prod_{k=1}^R
 \left|W_{d,N}\bigl((L_b\boldsymbol\theta)_k\bigr)\right|^s
 \,\dd\boldsymbol\theta\\
 &=
 \sum_{\ell=1}^R
 \int_{\mathbb T^R}
 \mathbf 1_{\mathfrak m_N(Q)}(\alpha_\ell)
 \prod_{k=1}^R|W_{d,N}(\alpha_k)|^s
 \,\dd\boldsymbol\alpha\\
 &=
 \sum_{\ell=1}^R
 \left(
 \int_{\mathfrak m_N(Q)}|W_{d,N}(\alpha)|^s\,\dd\alpha
 \right)
 \left(
 \int_{\mathbb T}|W_{d,N}(\alpha)|^s\,\dd\alpha
 \right)^{R-1}\\
 &=R\,T_s(N,Q)M_s(N)^{R-1}.
\end{align*}

Applying H\"older's inequality yields
\begin{align*}
 \int_{D_N(Q)}|E_{C,N}|\dd\boldsymbol\theta
 &\le N^{L-sR}\prod_{b=1}^s
 \left(\int_{D_N(Q)}U_b^s\dd\boldsymbol\theta\right)^{1/s}\\
 &\le R N^{L-sR}T_s(N,Q)M_s(N)^{R-1}.
\end{align*}
After multiplication by $N^{Rd-L}$, the right side is
$O_C(N^{d-s}T_s(N,Q))$. It tends to zero in the iterated-limit
sense by \eqref{diagvar:eq:scalar-input}. Finally, the complements
of the fixed major arcs decrease as their parameter increases, so
convergence along the parameters $\Psi(Q)$ implies
\eqref{diagvar:eq:joint-tightness}.
\end{proof}

\begin{proof}[Proof of Theorem~\ref{diagvar:thm:main}]
Under the hypotheses of Theorem~\ref{diagvar:thm:main}, by \cite[Theorem~1.4 and Remark~1.6]{RomeYamagishi}, we have
\begin{equation}\label{diagvar:eq:count}
 |V_N|=c_VN^{n-rd}      +O(N^{n-rd-\delta}),\qquad c_V>0
\end{equation}
for some $\delta>0$.

Put $R=2r+1$, $\kappa=K_*(d)$, and $s=2\kappa$. Consider
\[
 \mathbf Q(\mathbf u,\mathbf v)
 =(\mathbf F(\mathbf u),\mathbf F(\mathbf v),
   P(\mathbf u)-P(\mathbf v)).
\]
This is a diagonal system in $2n$ variables. If $\mathbf a_j$ is
the $j$-th column of $A$, the coefficient matrix $C$ of $\mathbf Q(\mathbf u,\mathbf v)$ has columns
\[
 \mathbf u_j=(\mathbf a_j,0,\varepsilon_j),\qquad
 \mathbf v_j=(0,\mathbf a_j,-\varepsilon_j)
 \quad(1\le j\le n),
\]
where the zeros denote $r$-dimensional blocks.

Partition $\kappa R$ original indices into $\kappa$ disjoint groups,
each of the form $I\sqcup J$ with $|I|=r+1$ and $|J|=r$.
Each group gives two disjoint sets of $R$ columns of $C$:
\begin{equation}\label{diagvar:eq:paired-blocks}
 \{\mathbf u_i:i\in I\}\cup\{\mathbf v_j:j\in J\},
 \qquad
 \{\mathbf v_i:i\in I\}\cup\{\mathbf u_j:j\in J\}.
\end{equation}
Both sets are independent. For example, a linear functional
$(\boldsymbol\beta,\boldsymbol\gamma,\alpha)$ vanishing on the
first set has $(\boldsymbol\beta,\alpha)=0$ by invertibility of
$B_I$, and then $\boldsymbol\gamma=0$ by invertibility of $A_J$.
The second set is treated with the two blocks interchanged. Thus
$C$ has $2\kappa=s$ pairwise disjoint invertible $R$-column blocks.

Apply Lemma~\ref{diagvar:lem:blocks} to the exponential sum of
$\mathbf Q$. If
\[
 S_{V,P,N}(\alpha)=\sum_{\mathbf z\in V_N}e(\alpha P(\mathbf z)),
 \qquad h=n-rd,
\]
the orthogonality identity \eqref{var:eq:orthogonality} and equation \eqref{var:eq:energy-comparison} give
\[
 \int_{\m_N(Q)}|S_{V,P,N}(\alpha)|^2\dd\alpha
 \le\int_{\T^R\setminus\M_N^{(R)}(Q)}
 |E_{C,N}(\boldsymbol\theta)|\dd\boldsymbol\theta.
\]
Since $Rd-2n=d-2h$, it follows that
\begin{equation}\label{diagvar:eq:constrained-energy}
 \lim_{Q\to\infty}\limsup_{N\to\infty}
 N^{d-2h}\int_{\m_N(Q)}|S_{V,P,N}(\alpha)|^2\dd\alpha=0.
\end{equation}
Equation~\eqref{diagvar:eq:count} gives $|V_N|\sim c_VN^h$ with
$c_V>0$. The  argument in the proof of
Theorem~\ref{var:thm:main} now applies, using
\eqref{diagvar:eq:constrained-energy} in place of
\eqref{var:eq:constrained-energy}, and \eqref{diagvar:eq:main-limit} follows.
\end{proof}

\appendix

\section{Appendix. Uniformity for the sum of digits of the Omega function}\label{app:uniform-digits}

Write $\N_0=\{0,1,2,\ldots\}$. We first record two elementary
averaging facts, including the passage from bounded test sequences to
uniform estimates that is needed in Proposition~\ref{prop:diffsquares}.

\begin{lemma}\label{lem:bounded-testing}
Let $v_N\in\ell^1(\N_0)$ be real sequences. If
$\sum_{j\ge0}v_N(j)c(j)\to0$ for every bounded real sequence $c$,
then $\|v_N\|_1\to0$.
\end{lemma}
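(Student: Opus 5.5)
The natural test sequence is the sign pattern of $v_N$ itself: testing against $c_N(j)=\operatorname{sgn} v_N(j)$ gives exactly $\|v_N\|_1$. Since this sequence changes with $N$, we cannot apply the hypothesis to it directly. The plan is to argue by contradiction and build one bounded real sequence $c$ with values in $\{-1,0,1\}$ for which $\sum_j v_N(j)c(j)$ fails to tend to $0$, via a gliding-hump construction, in the spirit of the proof of Schur's theorem that weak and norm convergence of sequences coincide in $\ell^1$.

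First we record that $v_N\to0$ coordinatewise: testing against the indicator $c=\mathbf 1_{\{j\}}$ gives $v_N(j)\to0$ for each fixed $j$. Hence for each fixed $J$, $\sum_{j\le J}|v_N(j)|\to0$.

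Now suppose $\|v_N\|_1\not\to0$. Then there are $\varepsilon>0$ and a subsequence $N_1<N_2<\cdots$ with $\|v_{N_i}\|_1\ge 5\varepsilon$. We choose indices $N_{i_1}<N_{i_2}<\cdots$ and cutoffs $0=J_0<J_1<J_2<\cdots$ inductively so that for each $\ell$:
\[
 \sum_{j\le J_{\ell-1}}|v_{N_{i_\ell}}(j)|<\varepsilon,
 \qquad
 \sum_{j>J_\ell}|v_{N_{i_\ell}}(j)|<\varepsilon.
\]
The first condition is possible by coordinatewise convergence. The second is possible because each $v_N\in\ell^1$ has small tails.

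Next define $c(j)=\operatorname{sgn} v_{N_{i_\ell}}(j)$ for $J_{\ell-1}<j\le J_\ell$. This $c$ is bounded by $1$. For $N=N_{i_\ell}$, the block $(J_{\ell-1},J_\ell]$ contributes $\sum_{J_{\ell-1}<j\le J_\ell}|v_N(j)|\ge 5\varepsilon-2\varepsilon=3\varepsilon$. The remaining coordinates contribute at most $\varepsilon+\varepsilon=2\varepsilon$ in absolute value. Therefore
\[
 \Big|\sum_j v_{N_{i_\ell}}(j)c(j)\Big|\ge\varepsilon
\]
for every $\ell$, which contradicts the hypothesis.

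The main obstacle is the bookkeeping in the inductive choice. The head condition must be enforced using the coordinatewise limit for the previously fixed $J_{\ell-1}$, and only afterward is $J_\ell$ chosen from the tail of the newly selected $v_{N_{i_\ell}}$. Doing the choices in this order is what makes the blocks disjoint while keeping both error terms below $\varepsilon$.
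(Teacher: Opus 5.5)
Your proposal is correct and essentially matches the paper's proof. Both get coordinatewise convergence from indicator tests, then build $c$ by a gliding hump, as blockwise signs of $v_{N_i}$ along a subsequence with small heads and tails; the paper uses thresholds $\varepsilon$ and $\varepsilon/8$ where you use $5\varepsilon$ and $\varepsilon$. One cosmetic point: with $J_0=0$ the index $j=0$ lies in no block, so either set $c(0)=0$ or start from $J_0=-1$, as the paper does with $M_0=-1$.
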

\begin{proof}
Testing on a single coordinate gives $v_N(j)\to0$ for each $j$.
Suppose that $\|v_N\|_1\ge\varepsilon>0$ along a subsequence.
Inductively choose indices $N_i$ from this subsequence and integers
$-1=M_0<M_1<M_2<\cdots$ such that
\[
 \sum_{j\le M_{i-1}}|v_{N_i}(j)|<\frac\varepsilon8,
 \qquad
 \sum_{j>M_i}|v_{N_i}(j)|<\frac\varepsilon8.
\]
The first choice is possible by coordinate convergence and the second
by summability. Define
$c(j)=\operatorname{sgn}(v_{N_i}(j))$ on
$M_{i-1}<j\le M_i$. The contribution of this block to
$\sum_jv_{N_i}(j)c(j)$ is at least $3\varepsilon/4$, and the absolute
contribution of its complement is less than $\varepsilon/4$.
This contradicts convergence to zero for the fixed bounded sequence $c$.
\end{proof}

\begin{lemma}\label{lem:probability-uniform}
Let $\mu_N$ be probability mass functions on $\N_0$, extended by zero
to $\Z$, and suppose that
\begin{equation}\label{eq:probability-shift}
 \delta_N:=\sum_{j\in\Z}|\mu_N(j-1)-\mu_N(j)|\longrightarrow0.
\end{equation}
For every uniquely ergodic system $(X,\nu,T)$ and $f\in C(X)$,
\[
 \sup_{y\in X}\left|
 \sum_{j\ge0}\mu_N(j)f(T^jy)-\av f\right|\longrightarrow0.
\]
\end{lemma}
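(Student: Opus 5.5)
The idea is to use unique ergodicity to make the shift $j\mapsto j+1$ invisible after smoothing by $\mu_N$, and then to get uniformity in $y$ from the classical uniform ergodic theorem. Throughout I write $h=f-\av f$ and
\[
A_N(y)=\sum_{j\ge0}\mu_N(j)h(T^jy).
\]
The claim is that $\sup_y|A_N(y)|\to0$.

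\textbf{Step 1: the shifted average $A_N(Ty)$ is close to $A_N(y)$, uniformly in $y$.} Indeed,
\[
A_N(Ty)=\sum_{j\ge0}\mu_N(j)h(T^{j+1}y)=\sum_{j\ge1}\mu_N(j-1)h(T^jy).
\]
Since $\mu_N$ is extended by zero, the $j=0$ term $\mu_N(-1)$ vanishes. Comparing with $A_N(y)$ term by term gives
\[
\sup_y|A_N(Ty)-A_N(y)|\le\|h\|_\infty\,\delta_N.
\]
Iterating this $m$ times yields
\[
\sup_y|A_N(T^my)-A_N(y)|\le m\|h\|_\infty\delta_N .
\]
Averaging over $0\le m<M$ then gives, for every $y$,
\[
|A_N(y)|\le\Big|\frac1M\sum_{m<M}A_N(T^my)\Big|+M\|h\|_\infty\delta_N .
\]

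\textbf{Step 2: the Cesàro average of $A_N$ is small, uniformly in $y$ and $N$.} Interchanging the sums,
\[
\frac1M\sum_{m<M}A_N(T^my)=\sum_{j\ge0}\mu_N(j)\,\frac1M\sum_{m<M}h(T^{m+j}y).
\]
Each inner average is bounded by
\[
\epsilon_M:=\sup_{z\in X}\Big|\frac1M\sum_{m<M}h(T^mz)\Big|.
\]
Since $\mu_N$ is a probability mass function, the whole expression is at most $\epsilon_M$. For a uniquely ergodic system, $\epsilon_M\to0$ by the standard uniform ergodic theorem (Oxtoby), because $\av h=0$.

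\textbf{Step 3: choose $M$, then $N$.} Combining the two steps,
\[
\sup_y|A_N(y)|\le\epsilon_M+M\|h\|_\infty\delta_N .
\]
Given $\varepsilon>0$, first fix $M$ with $\epsilon_M<\varepsilon/2$. Then, using $\delta_N\to0$, take $N$ large enough that $M\|h\|_\infty\delta_N<\varepsilon/2$. This proves the lemma.

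The only genuine input is the uniform ergodic theorem for uniquely ergodic systems. The main point to get right is the order of the limits: $M$ must be fixed before $N\to\infty$, since the error from Step 1 grows linearly in $M$. Lemma~\ref{lem:bounded-testing} is not needed for this statement.
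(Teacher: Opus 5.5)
Your proof is correct and follows the paper's argument: the shift estimate $\|A_N(f\circ T)-A_Nf\|_\infty\le\|f\|_\infty\delta_N$ is telescoped against the ergodic averages $f_H$ (your Ces\`aro average over $m<M$). It is then combined with the fact that $A_N$ is a contraction preserving constants, with uniform convergence of $f_H$ by unique ergodicity, and with the order of limits $N\to\infty$ before $H\to\infty$; your error term $M\|h\|_\infty\delta_N$ is only a slightly cruder form of the paper's $\frac{H-1}{2}\|f\|_\infty\delta_N$.
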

\begin{proof}
Put $A_Nf(y)=\sum_{j\ge0}\mu_N(j)f(T^jy)$.
Reindexing the sum gives
\[
 \|A_N(f\circ T)-A_Nf\|_\infty
 \le\|f\|_\infty\delta_N.
\]
For $H\ge1$, let $f_H=H^{-1}\sum_{i=0}^{H-1}f\circ T^i$.
Telescoping and the fact that $A_N$ is a contraction give
\[
 \|A_Nf-\av f\|_\infty
 \le\frac{H-1}{2}\|f\|_\infty\delta_N
      +\|f_H-\av f\|_\infty.
\]
Unique ergodicity implies uniform convergence of $f_H$ to $\av f$.
Let $N\to\infty$ for fixed $H$, and then let $H\to\infty$.
\end{proof}

\begin{lemma}\label{lem:omega-ap-shift}
For fixed integers $m\ge1$ and $0\le a<m$, set
\[
 \rho_{N;m,a}(j)=\frac1N
   \#\{1\le n\le N:\Omega(mn+a)=j\},\qquad j\ge0.
\]
Extending these functions by zero to negative integers, one has
\begin{equation}\label{eq:omega-ap-shift}
 \sum_{j\in\Z}
 |\rho_{N;m,a}(j-1)-\rho_{N;m,a}(j)|\longrightarrow0.
\end{equation}
\end{lemma}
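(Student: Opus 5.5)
The plan is to prove \eqref{eq:omega-ap-shift} from a local limit law for $\Omega$ in the progression $mn+a$. The idea is that $\rho_{N;m,a}$ is, up to a slowly varying factor, a Poisson distribution with parameter $\log\log N$, and a Poisson law with large parameter is nearly invariant under the shift $j\mapsto j-1$ in total variation.

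\emph{Reduction to a coprime progression.} Put $g=\gcd(m,a)$, $m=gm'$, $a=ga'$. Then $mn+a=g(m'n+a')$ with $\gcd(m',a')=1$, so $\Omega(mn+a)=\Omega(g)+\Omega(m'n+a')$. Hence $\rho_{N;m,a}$ is the translate by the fixed integer $\Omega(g)$ of $\rho_{N;m',a'}$. The left side of \eqref{eq:omega-ap-shift} is translation invariant, so we may assume $\gcd(m,a)=1$. Set $x=mN+a$ and $L=\log\log x$. Counting $n\le N$ with $\Omega(mn+a)=j$ is the same as counting integers $t\le x$ with $t\equiv a\pmod m$ and $\Omega(t)=j$, up to $O(1)$ boundary terms.

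\emph{Local law.} Apply the Selberg--Delange method to the Dirichlet series
\[
\sum_t \chi(t)z^{\Omega(t)}t^{-s},
\]
for each Dirichlet character $\chi$ modulo $m$ and $|z|\le 2-\epsilon$; the restriction keeps us away from the pole coming from the prime $2$. For nonprincipal $\chi$ this series behaves like $L(s,\chi)^z$, which contributes only a lower-order term. For the principal character it behaves like $\zeta(s)^z$ times a function analytic in $z$. Orthogonality of characters then gives a Sathe--Selberg formula in progressions, uniform for $1\le j\le(2-\epsilon)L$:
\[
\rho_{N;m,a}(j)=\frac{1}{\varphi(m)}\,\frac{1}{\log x}\,\frac{L^{j-1}}{(j-1)!}\,G_m\!\left(\frac{j-1}{L}\right)\bigl(1+O(L^{-1})\bigr).
\]
Here $G_m$ is smooth and positive on $[0,2-\epsilon]$. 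I expect this uniform local law in progressions to be the main technical obstacle. If I did not want to rederive it, I would try to cite a Sathe--Selberg theorem in arithmetic progressions (Selberg--Delange with characters, as in Tenenbaum's book) for fixed modulus.

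\emph{Window and tails.} Fix $A>0$ and let $W=\{j:|j-L|\le A\sqrt L\}$. The Tur\'an--Kubilius inequality for $\Omega$ restricted to the progression gives variance $O(L)$ about mean $L+O(1)$, with a fixed modulus costing only a constant. By Chebyshev, $\sum_{j\notin W}\rho(j)\ll A^{-2}$, and the same bound holds for $W-1$. So the terms of \eqref{eq:omega-ap-shift} with $j$ or $j-1$ outside $W$ contribute $O(A^{-2})$.

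\emph{Inside the window.} For $j,j-1\in W$ the local law gives
\[
\frac{\rho(j-1)}{\rho(j)}=\frac{j-1}{L}\cdot\frac{G_m((j-2)/L)}{G_m((j-1)/L)}\bigl(1+O(L^{-1})\bigr)=1+O_A(L^{-1/2}),
\]
using $|j-1-L|\le A\sqrt L+1$ and the Lipschitz continuity of $G_m$ near $1$. Therefore
\[
\sum_{j\in W}|\rho(j-1)-\rho(j)|\ll_A L^{-1/2}\sum_j\rho(j)\le L^{-1/2}.
\]
Letting $N\to\infty$ (so $L\to\infty$) and then $A\to\infty$ proves \eqref{eq:omega-ap-shift}.
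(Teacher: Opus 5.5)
Your proof is correct, but it takes a genuinely different route from the paper's.

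\textbf{Your route.} You derive the shift-invariance from the explicit shape of $\rho_{N;m,a}$. A Sathe--Selberg local law in the progression gives $\rho(j-1)/\rho(j)=1+O(AL^{-1/2})$ on the window $|j-L|\le A\sqrt L$, and Tur\'an--Kubilius controls the complement. You do not even need a progression version of Tur\'an--Kubilius: the count over $n\le N$ is dominated by the count over all $t\le x\ll_m N$. There is one cosmetic slip. Because $\rho$ is normalized by $N$ rather than $x$, the leading factor in your local law should be $x/(N\varphi(m)\log x)\approx m/(\varphi(m)\log x)$. This is irrelevant, since you only use ratios.

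\textbf{The paper's route.} The paper never computes the distribution. It encodes an arbitrary bounded test sequence $c$ as a point of $[-1,1]^{\Z}$ and lets $S_n(u,r)=(\tau^{\Omega(n)}u,nr)$ act on $[-1,1]^{\Z}\times\Z/m\Z$. It then applies \cite[Proposition 3.2]{BR}. Here $S_p$ depends only on $p\bmod m$, and for the primes $p\equiv1\pmod m$, whose reciprocals diverge, it is the single map $(\tau,\mathrm{id})$. In other words, multiplying by such a prime raises $\Omega$ by one without changing the residue class. This gives $\frac1N\sum_{n\le N}\bigl(c(\Omega(mn+a)+1)-c(\Omega(mn+a))\bigr)\to0$ for every bounded $c$. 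That is weak convergence of $\rho_{N;m,a}(\cdot-1)-\rho_{N;m,a}$ to $0$ in $\ell^1$, and the Schur-type Lemma~\ref{lem:bounded-testing} upgrades it to norm convergence.

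\textbf{Comparison.} The paper's argument is soft. It uses only $\sum_{p\equiv1 \bmod m}p^{-1}=\infty$ and the Bergelson--Richter machinery, needs no reduction to $\gcd(m,a)=1$, and gives no rate. Yours relies on the heavier uniform Selberg--Delange theory with characters. That input is standard for a fixed modulus, and you correctly identify it as the main one. In exchange, you bound the $\ell^1$ norm directly, with no weak-to-strong step, and you get an explicit rate: for example $O((\log\log N)^{-1/3})$ on taking $A=(\log\log N)^{1/6}$.
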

\begin{proof}
We apply \cite[Proposition 3.2]{BR}, which requires a finitely
generated multiplicative topological system, without any ergodicity
assumption. Let $c:\N_0\to[-1,1]$ be arbitrary and extend it to $\Z$.
On
\[
 Y=[-1,1]^{\Z}\times(\Z/m\Z)
\]
let $\tau$ be the bilateral shift $(\tau u)_j=u_{j+1}$ and define
\[
 S_n(u,r)=(\tau^{\Omega(n)}u,nr),\qquad n\ge1.
\]
These continuous maps form a multiplicative action. There are only
finitely many maps $S_p$ as $p$ ranges over the primes, since $S_p$
depends only on $p\bmod m$. For primes $p\equiv1\pmod m$ the map is
$R=(\tau,\mathrm{id})$, and
$\sum_{p\equiv1\ (m)}p^{-1}=\infty$ by Dirichlet's theorem.
Take $u_j=c(j)$, $y=(u,\overline1)$ and
$g(v,r)=v_{-1}\mathbf1_{\{r=\overline a\}}$.
The cited proposition says that the averages of $g(RS_ny)$ and
$g(R^2S_ny)$ have difference tending to zero. In this system this is
\[
 \frac1M\sum_{\substack{n\le M\\n\equiv a\ (m)}}
       \bigl(c(\Omega(n)+1)-c(\Omega(n))\bigr)\longrightarrow0.
\]
Set $M=mN+a$ and discard the possible initial term $n=a$.
After multiplying by $M/N$, it follows that
\[
 \sum_{j\ge0}
  \bigl(\rho_{N;m,a}(j-1)-\rho_{N;m,a}(j)\bigr)c(j)
 =\frac1N\sum_{n=1}^N
  \bigl(c(\Omega(mn+a)+1)-c(\Omega(mn+a))\bigr)
 \longrightarrow0.
\]
Scaling permits any bounded real $c$, so
Lemma~\ref{lem:bounded-testing} proves \eqref{eq:omega-ap-shift}.
\end{proof}

\begin{proposition}\label{prop:digit-shift-transfer}
Let $b\ge2$ be an integer, and let $\mu_N$ be probability mass
functions satisfying \eqref{eq:probability-shift}. For every uniquely
ergodic system $(X,\nu,T)$ and $f\in C(X)$,
\[
 \sup_{x\in X}\left|
   \sum_{j\ge0}\mu_N(j)f(T^{s_b(j)}x)-\av f
 \right|\longrightarrow0.
\]
\end{proposition}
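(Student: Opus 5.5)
The plan is to reduce to averages of $s_b$ over long intervals of consecutive integers, and then to aligned base-$b$ blocks. On an aligned block the distribution of $s_b$ is a convolution power, to which Lemma~\ref{lem:probability-uniform} applies. Throughout, set $h=f-\av f$; we must show $\sup_x|\sum_j\mu_N(j)h(T^{s_b(j)}x)|\to0$.

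\emph{Step 1 (smoothing $\mu_N$).} For fixed $H\ge1$ let $U_H$ be the uniform probability on $\{0,\dots,H-1\}$. Write $\mu_N(\cdot-i)-\mu_N$ as a telescoping sum of $i$ unit-shift differences. Then \eqref{eq:probability-shift} gives
\[
\|\mu_N*U_H-\mu_N\|_{1}\le\tfrac{H-1}{2}\,\delta_N .
\]
Since $|h|\le2\|f\|_\infty$, it suffices to show that, for every $\varepsilon>0$, there is $H$ with
\[
\sup_{m\ge0}\ \sup_{x\in X}\Bigl|\frac1H\sum_{i=0}^{H-1}h\bigl(T^{s_b(m+i)}x\bigr)\Bigr|\le\varepsilon .
\]
Indeed, $\sum_j(\mu_N*U_H)(j)h(T^{s_b(j)}x)=\sum_m\mu_N(m)\frac1H\sum_{i<H}h(T^{s_b(m+i)}x)$, and $\mu_N$ is a probability. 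We then let $N\to\infty$ with $H$ fixed, and afterwards let $\varepsilon\to0$.

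\emph{Step 2 (aligned blocks).} For an aligned block $[cb^l,(c+1)b^l)$ we have $s_b(cb^l+i)=s_b(c)+s_b(i)$ for $0\le i<b^l$. So the block average equals $\sum_j\pi_l(j)h(T^jy)$ with $y=T^{s_b(c)}x$. Here $\pi_l$ is the law of $s_b(i)$ for $i$ uniform in $[0,b^l)$, which is the $l$-fold convolution of the uniform measure on $\{0,\dots,b-1\}$. A convolution power of a nondegenerate lattice distribution satisfies $\sum_j|\pi_l(j-1)-\pi_l(j)|\to0$ as $l\to\infty$, in fact $O(l^{-1/2})$ by a local limit theorem. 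A softer route is also available: the total variation of $\pi_l*(\delta_1-\delta_0)$ is nonincreasing in $l$, and its limit is zero because the Fourier transform of the step measure has modulus $<1$ off $0$. Lemma~\ref{lem:probability-uniform}, applied with $\mu_l=\pi_l$, then gives
\[
\eta_l:=\sup_{y}\Bigl|\sum_j\pi_l(j)h(T^jy)\Bigr|\to0 .
\]

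\emph{Step 3 (general intervals).} Take $H=b^L$. Decompose $[m,m+H)$ greedily into maximal aligned $b$-adic blocks; each level $l\le L$ contributes at most $2(b-1)$ blocks. Fix $L'<L$. Blocks of level $<L'$ cover at most $2b^{L'}$ integers, which contributes at most $2\|h\|_\infty\, 2b^{L'-L}$ to the average. Each block of level $\ge L'$ has average at most $\sup_{l\ge L'}\eta_l$ in absolute value, so these blocks contribute at most that quantity. Choosing $L'$ large and then $L-L'$ large makes the total at most $\varepsilon$, uniformly in $m$ and $x$, which completes Step 1.

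The main obstacle is Step 2, namely verifying the asymptotic shift-invariance of the digit-sum distribution $\pi_l$ rigorously. I would use the Fourier argument: $\widehat{\pi_l}(\xi)=\phi(\xi)^l$ with $|\phi(\xi)|<1$ for $\xi\notin\Z$. Alternatively I would use a direct coupling, where a shift by $1$ in the sum is absorbed by one coordinate that is not at an endpoint, with failure probability tending to zero as $l\to\infty$.
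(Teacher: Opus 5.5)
Your proof is correct, but it is organized differently from the paper's.

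\textbf{Where the routes differ.} The paper never leaves aligned blocks. With $B=b^L$, it replaces $\mu_N$ by its average $\overline\mu_N$ over each aligned block $[Bv,Bv+B)$ and proves $\|\mu_N-\overline\mu_N\|_1\le B\delta_N$ by telescoping inside each block. The identity $s_b(Bv+r)=s_b(v)+s_b(r)$ then gives $\sum_j\overline\mu_N(j)f(T^{s_b(j)}x)=\sum_v w_{N,v}D_Lf(T^{s_b(v)}x)$ at once. Your sliding-window smoothing $\mu_N*U_H$ instead produces arbitrary windows $[m,m+H)$. That forces the greedy $b$-adic decomposition of Step~3 and the two-scale choice $L'\ll L$. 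In exchange you prove a stronger intermediate statement: averages of $h(T^{s_b(n)}x)$ over every window of length $b^L$ are small, uniformly in the position $m$ and in $x$, not just over aligned windows. The core input is the same in both proofs: asymptotic shift-invariance of the digit-sum law, fed into Lemma~\ref{lem:probability-uniform}.

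\textbf{Caveat on Step~2.} Your ``softer route'' does not close as stated. Monotonicity of $\|\pi_l*(\delta_1-\delta_0)\|_1$ together with $|\phi|<1$ off $\Z$ only shows that the Fourier transform decays. That controls $\ell^\infty$ and $\ell^2$ norms, not the $\ell^1$ norm. The paper gets $\ell^1$ from a rate. The bound $|\phi_b(t)|^{2L}\le\exp(-c_bLt^2)$ gives $\|\eta_L(\,\cdot-1)-\eta_L\|_2^2\ll L^{-3/2}$, and Cauchy--Schwarz over the $O(L)$-point support then gives $L^{-1/4}$. Your local-limit route does work, for instance combined with Scheff\'e's lemma to upgrade pointwise convergence to total variation. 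A fully soft fix is log-concavity: $\pi_l$ is a convolution power of a log-concave sequence, hence unimodal. Therefore $\sum_j|\pi_l(j-1)-\pi_l(j)|=2\max_j\pi_l(j)\le2\int_{\T}|\phi(t)|^l\dd t$, which tends to $0$ by dominated convergence.
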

\begin{proof}
For $L\ge1$, let $\eta_L$ be the distribution of the sum of $L$
independent uniform digits in $\{0,\ldots,b-1\}$. Define
\[
 D_Lf(y)=\frac1{b^L}\sum_{r=0}^{b^L-1}f(T^{s_b(r)}y)
        =\sum_j\eta_L(j)f(T^jy).
\]
We first show that $\|\eta_L(\,\cdot-1)-\eta_L\|_1\to0$.
The Fourier transform of one digit is
$\phi_b(t)=b^{-1}\sum_{r=0}^{b-1}e(rt)$. For $|t|\le1/2$,
\begin{align*}
 1-|\phi_b(t)|^2
 &=\frac2{b^2}\sum_{0\le r<s<b}
       \bigl(1-\cos(2\pi(s-r)t)\bigr)\\
 &\ge\frac{2(b-1)}{b^2}(1-\cos(2\pi t))
 \ge\frac{16(b-1)}{b^2}t^2.
\end{align*}
Consequently $|\phi_b(t)|^{2L}\le\exp(-c_bLt^2)$ for a constant
$c_b>0$. Parseval's identity gives
\[
 \|\eta_L(\,\cdot-1)-\eta_L\|_2^2
 =\int_{-1/2}^{1/2}|e(t)-1|^2|\phi_b(t)|^{2L}\dd t
 \ll_b L^{-3/2}.
\]
The difference is supported on at most $(b-1)L+2$ integers, so
Cauchy--Schwarz gives
$\|\eta_L(\,\cdot-1)-\eta_L\|_1\ll_b L^{-1/4}$.
Lemma~\ref{lem:probability-uniform} now implies
\begin{equation}\label{eq:digit-block-uniform}
 \|D_Lf-\av f\|_\infty\longrightarrow0.
\end{equation}

Fix $L$ and put $B=b^L$. Average $\mu_N$ on each consecutive block
of length $B$ by setting
\[
 w_{N,v}=\sum_{r=0}^{B-1}\mu_N(Bv+r),\qquad
 \overline\mu_N(Bv+r)=\frac{w_{N,v}}B
 \quad(v\ge0,\ 0\le r<B).
\]
For each block, comparison with the mean and telescoping yield
\[
 \sum_{r=0}^{B-1}
  \left|\mu_N(Bv+r)-\frac{w_{N,v}}B\right|
 \le B\sum_{r=0}^{B-2}|\mu_N(Bv+r+1)-\mu_N(Bv+r)|.
\]
Summing gives $\|\mu_N-\overline\mu_N\|_1\le B\delta_N$.
Since $s_b(Bv+r)=s_b(v)+s_b(r)$ and $\sum_vw_{N,v}=1$,
\[
 \sum_{j\ge0}\overline\mu_N(j)f(T^{s_b(j)}x)
 =\sum_{v\ge0}w_{N,v}D_Lf(T^{s_b(v)}x).
\]
It follows that
\[
 \sup_{x\in X}\left|
   \sum_{j\ge0}\mu_N(j)f(T^{s_b(j)}x)-\av f\right|
 \le b^L\|f\|_\infty\delta_N
      +\|D_Lf-\av f\|_\infty.
\]
First let $N\to\infty$ with $L$ fixed and then let $L\to\infty$,
using \eqref{eq:digit-block-uniform}.
\end{proof}

\begin{corollary}\label{cor:digit-omega-ap}
For every integer base $b\ge2$, every $m\ge1$ and $0\le a<m$,
every uniquely ergodic system $(X,\nu,T)$, and every $f\in C(X)$,
\[
 \lim_{N\to\infty}\sup_{x\in X}
 \left|\frac1N\sum_{n=1}^N
    f(T^{s_b(\Omega(mn+a))}x)-\av f\right|=0.
\]
In particular, for fixed $x$, the bounded even sequence
$j\mapsto f(T^{s_b(\Omega(j))}x)-\av f$ has vanishing rational
means. Therefore Proposition~\ref{prop:transfer} proves the
digit-sum variant of \eqref{eq:generalmain} under the hypotheses
of Theorem~\ref{thm:Birch}.
\end{corollary}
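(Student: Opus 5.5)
The plan is to combine Lemma~\ref{lem:omega-ap-shift} with Proposition~\ref{prop:digit-shift-transfer}. Fix $b\ge2$, $m\ge1$ and $0\le a<m$, and take $\mu_N=\rho_{N;m,a}$. This is a probability mass function on $\N_0$, because every $n\in[N]$ contributes to exactly one value $j=\Omega(mn+a)\ge0$. (When $a=0$ we have $mn\ge1$, so $\Omega$ is evaluated only at positive integers and no convention about $0$ enters.) Lemma~\ref{lem:omega-ap-shift} says precisely that $\delta_N\to0$ in the sense of \eqref{eq:probability-shift}. Then
\[
 \frac1N\sum_{n=1}^N f(T^{s_b(\Omega(mn+a))}x)=\sum_{j\ge0}\mu_N(j)f(T^{s_b(j)}x),
\]
and Proposition~\ref{prop:digit-shift-transfer} gives the uniform convergence in $x$ immediately.

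For vanishing rational means, fix $x$ and set $c(j)=f(T^{s_b(\Omega(j))}x)-\av f$ for $j\in\Z$, extended evenly via $\Omega(t)=\Omega(|t|)$. For $q\ge1$ and $0\le a<q$, the first part with $m=q$ shows that $\frac1N\sum_{n\le N}c(qn+a)\to0$. The hypothesis of Lemma~\ref{lem_ap} also asks about negative moduli. For $q<0$, note that $c(qn+a)=c(|q|n-a)$. If $a=0$ this is the case $(|q|,0)$. If $a\ge1$, write $|q|n-a=|q|(n-1)+(|q|-a)$; this is a shift by one index of the progression with residue $|q|-a\in\{1,\dots,|q|-1\}$, and a one-term shift does not affect the Cesàro limit. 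So Lemma~\ref{lem_ap} applies and $c$ has vanishing rational means, both for $\sigma=1$ and, by evenness, for $\sigma=-1$.

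The final claim then follows exactly as in the proof of Theorem~\ref{thm:Birch}. Proposition~\ref{prop:Birchenergy} gives \eqref{eq:energy} under \eqref{eq:rankcondition}. Proposition~\ref{prop:transfer} applied to $b=c$ gives $N^{-k}\sum_{\mathbf n}c(P(\mathbf n))\to0$. The value at $0$ contributes $O(N^{-1})$ by Lemma~\ref{lem:zeros}, regardless of the convention chosen for $s_b(\Omega(0))$.

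There is no real obstacle here, since all the analytic content is in Lemma~\ref{lem:omega-ap-shift} and Proposition~\ref{prop:digit-shift-transfer}. The only points needing care are the bookkeeping that $\rho_{N;m,a}$ is an honest probability mass function, and the reduction of negative moduli and signs in Lemma~\ref{lem_ap} to positive progressions via evenness.
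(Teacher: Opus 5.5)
Your proposal is correct and matches the paper's proof: take $\mu_N=\rho_{N;m,a}$ in Proposition~\ref{prop:digit-shift-transfer}, with the shift condition supplied by Lemma~\ref{lem:omega-ap-shift}, and then obtain vanishing rational means from Lemma~\ref{lem_ap}. Your reduction of negative moduli to positive progressions via evenness, using $|q|n-a=|q|(n-1)+(|q|-a)$, spells out a step the paper leaves implicit.
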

\begin{proof}
Apply Proposition~\ref{prop:digit-shift-transfer} to
$\mu_N=\rho_{N;m,a}$, using Lemma~\ref{lem:omega-ap-shift}. Lemma~\ref{lem_ap} then gives the asserted
vanishing rational means.
\end{proof}

\section*{Acknowledgments}

We thank  Zhi Qi for some interesting discussions and for sharing the preprint \cite{QZ} with us.  During the preparation of this manuscript, ChatGPT was used to explore some of the proof strategies and calculations. The authors take full responsibility for human verifications and the content.

\end{document}